\documentclass[reqno]{amsart}

\usepackage[utf8]{inputenc}
\usepackage[T1]{fontenc}
\usepackage{amsmath,amsfonts,amssymb,mathtools}
\usepackage{bm}
\usepackage{enumitem}
\usepackage[margin=2.7cm]{geometry}
\usepackage[colorlinks=true,linkcolor=blue,citecolor=blue,urlcolor=blue]{hyperref}
\usepackage{tikz}
\usetikzlibrary{arrows.meta,positioning}

\newtheorem{theorem}{Theorem}[section]
\newtheorem{proposition}{Proposition}[section]
\newtheorem{corollary}{Corollary}[section]
\newtheorem{lemma}{Lemma}[section]
\newtheorem{definition}{Definition}[section]
\newtheorem{assumption}{Assumption}[section]
\theoremstyle{remark}
\newtheorem{remark}{Remark}[section]

\title[Hydrostatic energy structure of the barotropic CPE]
{On the intrinsic hydrostatic energy structure of the barotropic compressible primitive equations}

\author{Mamadou Korca Ba}
\address{Université Assane Seck de Ziguinchor, Laboratoire de Mathématiques et Applications (LMA), Sénégal \and Institut de Mathématiques de Toulon (IMATH, EA 2134), Université de Toulon, La Garde, France}
\email{mamadou-korca-ba@etud.univ-tln.fr}
\email{m.ba20150526@zig.univ.sn}

\author{Mehmet Ersoy}
\address{Institut de Mathématiques de Toulon (IMATH, EA 2134), Université de Toulon, La Garde, France}
\email{mehmet.ersoy@univ-tln.fr}

\author{Timack Ngom}
\address{Université Assane Seck de Ziguinchor, Laboratoire de Mathématiques et Applications (LMA), Sénégal}
\email{timack.ngom@univ-zig.sn}

\subjclass[2020]{35Q35, 76N10, 86A10}
\keywords{Compressible primitive equations, hydrostatic balance, first hydrostatic integral, Montgomery potential, total mechanical energy identity, Bresch--Desjardins entropy, degenerate viscosity, isothermal change of variables}

\begin{document}

\begin{abstract}
In the compressible primitive equations, gravity is not merely a source
term: through the hydrostatic balance it forces, it organizes the
system's energy geometry. The hydrostatic balance forces the sum of
the barotropic enthalpy and the geopotential to be independent of the
vertical coordinate -- an intrinsic first hydrostatic integral,
coinciding with the classical Montgomery potential of geophysical
fluid dynamics -- and testing the continuity equation against it gives
the total mechanical energy identity, with no separate estimate on the
vertical velocity. The core of this note is an augmented
hydrostatic identity, obtained by testing the momentum equation
against a Bresch--Desjardins-type augmented velocity: differentiating
the first integral again produces two structural functions governing
its cross terms, whose vanishing cases are characterized exactly here
-- one vanishes identically if and only if the pressure is quadratic,
the other if and only if it is isothermal, no law annihilating both.
The resulting augmented identity also isolates a coupling involving
the horizontal gradient of the vertical velocity that is not
controlled by the energetic structure developed here, a limitation of
this direct route to closing a Bresch--Desjardins-type entropy, not an
impossibility theorem. For the isothermal law, the
vanishing of the second function forces the vertical density profile
to be a pure exponential, exactly the factorization underlying the
Ersoy--Ngom--Sy change of variables -- the multiplicative expression of
the Montgomery first integral at that one law -- on which basis we
give a detailed return-to-original-variables argument for the
existence theorem of Wang, Dou and Jiu.
\end{abstract}

\maketitle

\setcounter{tocdepth}{2}
\tableofcontents

\section{Introduction}

The compressible primitive equations (CPE) describe a viscous
compressible fluid in the hydrostatic regime, in which the vertical
momentum balance degenerates to the hydrostatic equilibrium
\begin{equation}
\label{eq:intro-hydrostatic}
\partial_yp(\rho)=-g\rho,
\end{equation}
where $g>0$ is the gravitational intensity: the vertical velocity is no
longer governed by an independent evolution equation, but reconstructed
from the continuity equation and the impermeability condition. This
approximation, introduced by Lions--Temam--Wang
\cite{LionsTemamWang1,LionsTemamWang2} and derived in the compressible
setting by Ersoy, Ngom and Sy \cite{ErsoyNgomSy}, underlies work on
strong solutions \cite{LiuTiti1,CaoTiti}, asymptotic stability with
physical vacuum \cite{LiuTiti2}, low Mach number limits
\cite{LiuTiti3}, and global weak solutions \cite{WangDouJiu,LiuTiti4}.

The basic energy estimate -- testing the momentum equation against the
velocity itself -- controls only the mechanical energy and gives no
information on the horizontal density gradient $\nabla_x\rho$. For a
viscosity that degenerates at vacuum, as is generically needed to admit
vacuum states, this is insufficient: without a compactness mechanism
for the density, mass and mechanical-energy bounds alone do not
provide, within the compactness framework considered here, the strong
convergence of $\rho$ needed to pass to the limit in the pressure and
convective nonlinearities.

This is precisely the role the Bresch--Desjardins entropy method was
built to play for the compressible Navier--Stokes equations
\cite{BD1,BD2}: testing the momentum equation against a velocity
augmented by a density-gradient drift produces an explicit bound on
$\nabla_x\rho$ (or $\nabla_x\sqrt\rho$), from which strong compactness
of the density follows by an Aubin--Lions argument. Mellet and Vasseur
\cite{MelletVasseur}, Vasseur and Yu \cite{VasseurYu,VasseurYu2}, and,
in its most flexible current form, the $\kappa$-entropy method of
Bresch, Vasseur and Yu \cite{BVY} extend this mechanism to absorb
degenerate-viscosity terms, each changing how the drift couples to the
viscosity profile, but none dispensing with the underlying bound on
$\nabla_x\rho$ itself.

Compressed into a single chain: a closed Bresch--Desjardins-type
identity $\to$ control of $\nabla_x\rho$ $\to$ strong compactness of the
density $\to$ passage to the limit in the nonlinearities $\to$ a global
weak solution. Reproducing this chain for the CPE with gravity is not
routine: in the parent compressible Navier--Stokes equations the
vertical velocity is tested against its own viscous momentum equation,
whereas in the hydrostatic reduction it is instead reconstructed from
the continuity equation alone, with no viscous term of its own from
which to extract a gradient bound. Along the \emph{direct
augmented-velocity route} considered here, this produces a further
vertical coupling -- distinct from the \emph{kinematic} one encoded
later by the structural functions $\Lambda$ and $\mathcal{N}$ (Section
\ref{sec:Lambda-N}) -- localizing the remaining control problem in
mechanism (b) of Theorem \ref{thm:kappa-entropy}. We do not claim that a
direct bound on $\nabla_xv$ is necessary for every conceivable route to
a general weak theory, only that the direct route developed here does
not supply it.

For the three-dimensional barotropic CPE with density-dependent,
possibly degenerate viscosity -- the class considered here -- the two
global weak-solution frameworks closest to the structural questions
addressed below are those of Wang, Dou and Jiu and of Liu and Titi;
neither isolates the energetic content of the
hydrostatic balance \eqref{eq:intro-hydrostatic} itself. Wang, Dou and
Jiu \cite{WangDouJiu} treat the isothermal law $p(\rho)=c^2\rho$ with
gravity and vacuum, using the Ersoy--Ngom--Sy change of variables
\cite{ErsoyNgomSy}, which factorizes the hydrostatic density profile so
that the transformed density no longer depends on the vertical
coordinate -- a reduction specific to the linear (isothermal) law,
whose multiplicative factorization does not extend to a general
barotropic law $p(\rho)=a\rho^\gamma$, $\gamma>1$. Liu and Titi
\cite{LiuTiti4} instead establish a global weak solution theory for the
polytropic case with degenerate viscosity, following the
Bresch--Desjardins/Mellet--Vasseur route
\cite{BD1,BD2,MelletVasseur,VasseurYu}, but by removing gravity
entirely, eliminating the hydrostatic coupling between density and
pressure. Neither work formulates the exceptional isothermal structure
through the general criterion $\mathcal N\equiv0$, or identifies the
vertical Bresch--Desjardins dissipation -- already present in the
entropy computations of Ersoy, Ngom and Sy and of Wang, Dou and Jiu,
though absent from Liu and Titi's gravity-free model -- as the
transformed counterpart of mechanism (b) arising from a general
barotropic augmented identity in the original variables. This is the
gap the present note addresses: differentiating the hydrostatic balance
to build the augmented equation this route requires produces exactly
the additional vertical cross terms just described, for which the existing
Bresch--Desjardins/Mellet--Vasseur/Vasseur--Yu framework does not
directly provide, in the present hydrostatic formulation, the control
it supplies for the corresponding non-hydrostatic terms.

This note isolates, independently of either route and for a
\emph{general} barotropic pressure law, the energetic structure the
hydrostatic balance \eqref{eq:intro-hydrostatic} imposes by itself:
gravity organizes the system's energy geometry rather than entering it
as a mere source term. The structural fact underlying every result
below is that the hydrostatic balance alone forces a single combination
of the barotropic enthalpy and the geopotential -- a \emph{first
hydrostatic integral} -- to be independent of the vertical coordinate;
this combination coincides with the classical \emph{Montgomery
potential} $\mathfrak M=\mathfrak h(\rho)+\Phi$ of geophysical fluid
dynamics (Definition \ref{def:hydrostatic-potential}). What is shown
here is not the potential itself, which is classical, but that its
vertical constancy organizes the energetic analysis of the barotropic
CPE at every order of differentiation, providing the thread connecting
the results below: at zeroth order it yields the total mechanical
energy identity (Section \ref{sec:total-energy}) with no separate
estimate on the vertical velocity; differentiated once more, it isolates
the structural functions $\Lambda,\mathcal{N}$ governing the cross terms
of a Bresch--Desjardins-type augmented-velocity identity (Section
\ref{sec:Lambda-N}), whose classification, two residual mechanisms, and
distinguished isothermal specialization are detailed in the
contributions below.

The classification just described opens a second chain, specific to the
one law it singles out for which $\mathcal{N}\equiv0$: the isothermal law. There
the vertical constancy of $\mathfrak M$ takes an exceptional form -- it
forces the vertical density profile itself to be a pure exponential
(Proposition \ref{lem:Lambda-N-geometric}), exactly the factorization
performed by the change of variables of Ersoy, Ngom and Sy
\cite{ErsoyNgomSy}, which we show is nothing but $\mathfrak M$ written
multiplicatively (Proposition \ref{prop:ens-montgomery}): on this
reading, the change of variables underlying the existence theorem of
Wang, Dou and Jiu \cite{WangDouJiu} -- itself known, as is their
existence theorem -- is not a device fitted to the isothermal law from
outside, but the same hydrostatic structure re-expressed in transformed
coordinates, illustrated for both the polytropic and the isothermal
pressure laws (Section \ref{sec:polytropic}). As a concrete instance, we
use the $\mathcal{N}\equiv0$ characterization to make explicit the
``return to original variables'' step in the existence theorem of Wang,
Dou and Jiu (\S\ref{subsec:wdj-transfer}). Beyond this transfer and the
explicit absorption above, the results are structural: no new
compactness or existence statement is claimed, and the construction is
compared with the related static-energy formulations of Tailleux in
\S\ref{subsec:tailleux-comparison}.

In summary, and in more detail:
\begin{itemize}
\item we identify the first hydrostatic integral and derive from it the
total mechanical energy identity, with no separate estimate on the
vertical velocity (Theorem \ref{thm:total-energy});
\item we classify the barotropic pressure
laws annihilating the structural functions $\Lambda,\mathcal{N}$ generated by the
augmented identity below: $\Lambda\equiv0$
if and only if the
law is quadratic, $\mathcal{N}\equiv0$ if and only if it is isothermal, no law
annihilating both (Theorem \ref{thm:Lambda-N-classification});
\item we construct a genuine two-velocity augmented identity in which
the drift's own kinetic energy and equation cancel identically a
formally expected third, Hessian-type mechanism, leaving only
mechanisms (a) and (b) in the final balance (Theorem
\ref{thm:kappa-entropy});
\item we identify mechanism (b) -- consisting of a
vertical-shear term and a density-gradient term, both coupled to
$\nabla_xv$ -- as a structural limitation of
the direct augmented-velocity route developed here, which does not
supply a closed Bresch--Desjardins entropy for the CPE with gravity,
traced explicitly to the hydrostatic degeneracy of the vertical
momentum balance (Remark \ref{prop:mechanism-b-obstruction}, Remark
\ref{rem:gravity-role});
\item we show that the
Ersoy--Ngom--Sy change of variables, already known, is the first
hydrostatic integral written multiplicatively at the isothermal law
(Proposition \ref{prop:ens-montgomery});
\item we give a rigorous, term-by-term
transfer of the weak solutions of Wang, Dou and Jiu \cite{WangDouJiu}
to the original hydrostatic variables, making explicit the argument
behind their Theorem 2.2 (Theorem \ref{thm:wdj-transfer}), and, at the
isothermal law and $\kappa=1$, identify the classical vertical
Bresch--Desjardins dissipation of Ersoy, Ngom and Sy and of Wang, Dou
and Jiu as the negative of the transformed mechanism (b) in the general
barotropic augmented identity (Proposition
\ref{prop:wdj-mechanism-reduction}, Remark
\ref{rem:wdj-cross-term-favorable}).
\end{itemize}
Figure \ref{fig:overview} summarizes the general chain just described.

\begin{figure}[htbp]
\centering
\begin{tikzpicture}[
  box/.style={draw, rounded corners, align=center, text width=6.6cm,
    minimum height=9mm, font=\small, inner sep=2pt},
  small/.style={draw, rounded corners, align=center, text width=3.15cm,
    minimum height=9mm, font=\small, inner sep=2pt},
  arr/.style={-{Latex[length=2mm]}}
]
\node[box] (hb) at (0,0) {Hydrostatic balance \eqref{eq:intro-hydrostatic}};
\node[box] (pp) at (0,-1.3) {Pressure potential $e$, enthalpy $\mathfrak h$ (\S\ref{sec:hydrostatic-integral})};
\node[box] (hi) at (0,-2.6) {First hydrostatic integral $\mathfrak M$ (Proposition \ref{lem:first-integral})};
\node[box] (en) at (0,-3.9) {Total mechanical energy identity (Thm.\ \ref{thm:total-energy})};
\node[box] (ln) at (0,-5.2) {Structural functions $\Lambda,\mathcal{N}$ (Def.\ \ref{def:Lambda-N})};
\node[box] (cl) at (0,-6.5) {Classification (Thm.\ \ref{thm:Lambda-N-classification})};
\node[small] (quad) at (-1.85,-7.8) {$\Lambda\equiv0$:\\ quadratic law};
\node[small] (iso) at (1.85,-7.8) {$\mathcal{N}\equiv0$:\\ isothermal law};
\node[box] (bd) at (0,-9.4) {Mechanism (b): $\nabla_xv$-coupling uncontrolled by the direct route (Rem.\ \ref{prop:mechanism-b-obstruction}, \ref{rem:gravity-role})};
\node[box] (wdj) at (0,-10.9) {Isothermal case: rigorous WDJ transfer (\S\ref{subsec:wdj-transfer})};
\foreach \a/\b in {hb/pp, pp/hi, hi/en, en/ln, ln/cl}
  \draw[arr] (\a) -- (\b);
\draw[arr] (cl) -- (quad);
\draw[arr] (cl) -- (iso);
\draw[arr] (quad.south) -- ++(0,-0.35) -| (bd.north);
\draw[arr] (iso.south) -- ++(0,-0.35) -| (bd.north);
\draw[arr] (bd) -- (wdj);
\end{tikzpicture}
\caption{The logical chain of this note. The hydrostatic balance alone
produces the energetic structure of Sections
\ref{sec:hydrostatic-integral}--\ref{sec:Lambda-N}; the classification
of $\Lambda,\mathcal{N}$ singles out the quadratic and isothermal laws, but only
the isothermal branch is developed further, since mechanism (b)
remains uncontrolled by the direct augmented-velocity route at
\emph{every} barotropic law considered here
(Remark \ref{prop:mechanism-b-obstruction}).}
\label{fig:overview}
\end{figure}

Its isothermal specialization -- traced in \S\ref{subsec:wdj-transfer}
once the relevant notation is available -- follows the same chain
through the Ersoy--Ngom--Sy change of variables to the detailed
transfer of Theorem \ref{thm:wdj-transfer}.

\section{Preliminaries}
\label{sec:preliminaries}

This section fixes the setting used throughout: the two strands of
prior work the present note draws on, then the model, unknowns, and
standing hypotheses under which every subsequent result is stated.

\subsection{Related work}
\label{sec:related-work}

The mathematical analysis of the CPE has developed along several
largely independent lines, organized here by theme; the present note
draws on all of them but is subsumed by none.

\subsubsection{Derivation and strong solutions}
The hydrostatic approximation was introduced formally for the
incompressible primitive equations by Lions, Temam and Wang
\cite{LionsTemamWang1,LionsTemamWang2}, and placed on rigorous footing
by Cao and Titi \cite{CaoTiti}, who
obtained global well-posedness of strong solutions for the
three-dimensional viscous primitive equations of large-scale ocean and
atmosphere dynamics. In the compressible setting, Ersoy, Ngom and Sy
\cite{ErsoyNgomSy} gave the formal derivation from the compressible
Navier--Stokes equations and a first stability result for weak
solutions -- not existence: given a sequence of weak solutions
satisfying the requisite a priori bounds, they prove convergence, up to
a subsequence, to a weak solution of the same system, and note that
existence itself remains open. For their reduced
isothermal system, obtained after the change of variables discussed
below, their stability proof already deploys a Bresch--Desjardins-type
computation, testing the momentum equation against an augmented
velocity of the form $\bm{u}+2\bar\nu_1\nabla_x\log\xi$ to control the
density gradient. This is an early, isothermal-specific instance of the
Bresch--Desjardins entropy technique, applied directly to the
hydrostatic CPE well before the more flexible extensions of
Vasseur and Yu \cite{VasseurYu,VasseurYu2} and Bresch, Vasseur and Yu
\cite{BVY} discussed below, and a natural precursor of the
construction Section \ref{sec:Lambda-N} develops here for a
general barotropic law. The sign-indefinite integral that Theorem
\ref{thm:total-energy} below routes around is recorded explicitly, for
the original (untransformed) hydrostatic system, in the theses of
Ersoy \cite{ErsoyThese2010} and Ngom \cite{NgomThese2010}: testing the
momentum equation against $\bm u$ produces $\int_\Omega \rho g v\,dx\,dy$,
whose sign they state cannot be determined from the hydrostatic balance
alone. Both theses bypass the difficulty operationally, through the
same isothermal change of variables just recalled, rather than through
the structural, pressure-law-independent explanation the Montgomery
potential $\mathfrak M$ of Definition \ref{def:hydrostatic-potential}
supplies here. Liu and Titi \cite{LiuTiti1} established local
well-posedness of strong solutions for the three-dimensional CPE, and,
together with Xin \cite{LiuTiti2}, studied the asymptotic stability of
the hydrostatic equilibrium for the associated free-boundary problem
under \emph{physical vacuum} -- a degeneracy of the pressure at the
free boundary distinct from, but related to,
the vacuum discussed in Section \ref{sec:polytropic}. Very
recently, Hieber, Iida, Roy and Z\"ochling \cite{HieberIidaRoyZochling}
introduced a hydrostatic Lagrangian approach to the CPE based on the
compressible hydrostatic Lam\'e and Stokes operators, obtaining local
strong well-posedness for large data and global well-posedness
for small data, with and without gravity, under several
pressure laws. In parallel, Ba, Ersoy and Ngom
\cite{BaErsoyNgom2026} establish local-in-time existence and
uniqueness of strong solutions for the isothermal CPE with gravity,
away from vacuum, via a Danchin-type maximal $L^p$-regularity approach.
They work directly in the Ersoy--Ngom--Sy stratified
variables recalled in \S\ref{subsec:wdj-transfer} below -- the same
change of variables the transfer result of this note relies on.
Requiring the density to remain strictly positive and bounded, rather
than the degenerate viscosity used elsewhere to admit vacuum, this
functional-analytic route is structurally distinct from the
Bresch--Desjardins/entropy methods reviewed here.

\subsubsection{Weak solutions and degenerate viscosity}
The general theory of global weak solutions for compressible flows
originates with the constant-viscosity, barotropic compressible
Navier--Stokes equations, for which Lions \cite{Lions} constructed
global weak solutions for adiabatic exponent $\gamma\ge9/5$ in three
space dimensions, later extended by Feireisl, Novotn\'y and
Petzeltov\'a \cite{Feireisl} to $\gamma>3/2$. The CPE,
obtained from the compressible Navier--Stokes equations in the
hydrostatic limit, are not covered directly; the two
routes described below were developed specifically to accommodate,
respectively, gravity with vacuum and density-dependent
degenerate viscosity.

For viscosities that degenerate at vacuum, the basic energy estimate
alone does not control the density gradient. The two routes recalled in
the Introduction both postdate and rely on the general
theory of global weak solutions for compressible Navier--Stokes
equations with density-dependent, possibly degenerate, viscosity built
by Bresch and Desjardins \cite{BD1,BD2}, Mellet and Vasseur
\cite{MelletVasseur}, Vasseur and Yu \cite{VasseurYu,VasseurYu2}, and,
in its most flexible current form -- the $\kappa$-entropy method,
applicable to viscosities satisfying a
Bresch--Desjardins-type structural relation -- Bresch, Vasseur and Yu
\cite{BVY}. A global weak theory for the three-dimensional CPE with
gravity covering a general class of polytropic pressure laws -- rather
than the linear law used by \cite{WangDouJiu} -- does not appear to be
available within the frameworks considered here; the energetic
structure isolated in this note is intended as a foundation for closing
this gap.

The two closest precedents resolve the same degenerate-viscosity
difficulty by removing, in complementary ways, the hydrostatic density
coupling this note studies. Liu and Titi \cite{LiuTiti4} remove gravity
entirely, so that their hydrostatic balance reduces to
$\partial_zP(\rho)=0$: since $P$ is strictly increasing, this forces the
density itself to be independent of the vertical coordinate,
$\rho=\rho(t,x)$, eliminating the hydrostatic density stratification --
and with it any analogue of the structural functions $\Lambda,\mathcal{N}$ -- from
the outset. The vertical velocity still satisfies no momentum equation
of its own in their model either -- they identify this explicitly as
their main difficulty -- and they recover its compactness, needed to
pass to the limit in the nonlinear terms, through a Bresch--Desjardins
entropy and Mellet--Vasseur route of the same general degenerate-viscosity
family this note builds on; but their route acts on a density with no
vertical structure to begin
with, since $\rho=\rho(t,x)$ identically, whereas retaining gravity
reintroduces the genuine hydrostatic profile of Proposition
\ref{lem:first-integral} and, with it, the vertical density variation
their route never has to confront. Wang,
Dou and Jiu \cite{WangDouJiu} keep gravity but work at the one pressure law,
isothermal, for which the Ersoy--Ngom--Sy change of variables removes
the vertical coupling by a different route: after the transform,
$\partial_z\xi=0$ eliminates the pressure-driven density stratification
in the transformed vertical coordinate entirely, and the surviving
cross term -- already treated through the classical vertical
Bresch--Desjardins dissipation in Ersoy--Ngom--Sy's and Wang--Dou--Jiu's
own entropy computations \cite[eq.\ (40)]{ErsoyNgomSy}, \cite[eq.\
(4.3)]{WangDouJiu} -- is traced here (Lemma
\ref{lem:wdj-cross-term-sign}, Proposition
\ref{prop:wdj-mechanism-reduction}) back to mechanism (b) of Theorem
\ref{thm:kappa-entropy}. Neither precedent formulates the term as
mechanism (b) of a general-barotropic, original-variable augmented
identity, with gravity retained and no change of variables or
vertically independent density available to remove the coupling; but
their relation to it differs. Liu and Titi's route never encounters
the term at all, since $\rho=\rho(t,x)$ eliminates the vertical density
coupling from the outset. Wang, Dou and Jiu's route does encounter it
-- their own Bresch--Desjardins computation already controls its
transformed counterpart, as the identification above makes precise --
but only after passing through the isothermal change of variables, not
as mechanism (b) of a general law in the original variables. This is
why the obstruction of Remark \ref{prop:mechanism-b-obstruction}, for a
general barotropic law in the original variables, is not visible from
either existing route, and why classifying it explicitly is the
contribution of Section \ref{sec:Lambda-N}.

\subsubsection{Energy conservation and regularity criteria}
A separate line of work addresses the converse question: under what
regularity does a weak solution of the CPE conserve energy, rather than
merely satisfy an energy \emph{inequality}. Ne\v{c}asov\'a,
Rodr\'iguez-Bellido and Tang \cite{NecasovaRodriguezBellidoTang2023}
obtain sufficient regularity conditions for energy equality to hold for
weak solutions of the CPE with degenerate viscosity, even in the
presence of vacuum. This complements the structural identity of
Theorem \ref{thm:total-energy}: it asks when the induced weak
inequality \eqref{eq:weak-energy-ineq} can be upgraded to an equality,
a question not addressed here. Ne\v{c}asov\'a, Tang, Wiedemann and Zhu
\cite{NecasovaTangWiedemannZhu2025} obtain an analogous result for the
\emph{inviscid, incompressible} primitive equations -- a related but
distinct system, sharing the hydrostatic domain but neither compressible
nor viscous.

\subsubsection{Comparison with static energy formulations}
\label{subsec:tailleux-comparison}
The construction of $\mathfrak{M}$ here is conceptually related to, but
distinct from, the theory of available potential and
static energy for stratified fluids developed by Tailleux
\cite{Tailleux2013,Tailleux2018,TailleuxDubos2024,TailleuxRoullet2025}.
Both isolate a scalar potential that reorganizes the energy of a
stratified compressible fluid around its hydrostatic structure, but
Tailleux's static energy is defined relative to an adiabatically
rearranged reference state, chosen independently of the dynamics.
The vertical constancy of the hydrostatic potential $\mathfrak{M}$ of
Definition \ref{def:hydrostatic-potential} used here, by contrast, is an
exact, local, and unconditional consequence of the hydrostatic balance
\eqref{eq:intro-hydrostatic} itself, and does not require constructing
an adiabatically rearranged reference state of the kind Tailleux's
construction builds: shifting the reference density $\rho_\star$ fixing
the additive constant in $\mathfrak h$ (Definition
\ref{def:pressure-potential}) only shifts $\mathfrak{M}$ by that same
constant, and leaves its vertical constancy, the property used
throughout this note, unaffected.

\subsection{The model}
\label{sec:model}

Against this background, we now fix the equations, unknowns, and
standing hypotheses on which every result below rests.

We consider the cylindrical domain $\Omega=\mathbb T^2\times(0,H)$,
$H>0$, with $x=(x_1,x_2)\in\mathbb T^2$ the horizontal variable and
$y\in(0,H)$ the vertical variable. The unknowns are the density
$\rho=\rho(t,x,y)\ge0$, the horizontal velocity $\bm{u}\in\mathbb R^2$, and
the vertical velocity $v\in\mathbb R$. For a horizontal field $\bm{u}$, we
write $D_x\bm{u}=\tfrac12(\nabla_x\bm{u}+(\nabla_x\bm{u})^\top)$ for its symmetric
part. The compressible primitive equations read
\begin{equation}
\label{eq:CPE}
\left\{
\begin{aligned}
&\partial_t\rho+\operatorname{div}_x(\rho \bm{u})+\partial_y(\rho v)=0,\\
&\partial_t(\rho \bm{u})+\operatorname{div}_x(\rho \bm{u}\otimes \bm{u})+\partial_y(\rho v\bm{u})+\nabla_xp(\rho)
=2\operatorname{div}_x(\nu_1(\rho)D_x\bm{u})+\partial_y(\nu_2(\rho)\partial_y\bm{u}),\\
&\partial_yp(\rho)=-g\rho,
\end{aligned}
\right.
\end{equation}
with horizontal periodicity, impermeability $v=0$ and the Neumann
condition $\partial_y\bm{u}=0$ on $\{y=0,H\}$.

\begin{assumption}[Pressure law]
\label{ass:pressure}
$p:[0,\infty)\to[0,\infty)$ satisfies $p\in C([0,\infty))\cap
C^1((0,\infty))$, $p(0)=0$, and $p'(\rho)>0$ for every $\rho>0$.
\end{assumption}

\begin{assumption}[Viscosities]
\label{ass:viscosities}
$\nu_1,\nu_2:[0,\infty)\to[0,\infty)$ are continuous, $C^1$ on
$(0,\infty)$, with $\nu_1,\nu_2\ge0$.
\end{assumption}

No algebraic relation between $p,\nu_1,\nu_2$ is imposed at this stage;
Assumptions \ref{ass:pressure}--\ref{ass:viscosities} suffice for the
energetic constructions of Sections \ref{sec:hydrostatic-integral}--\ref{sec:total-energy}.
The structural analysis of Section \ref{sec:Lambda-N}, where the
functions $\Lambda$ and $\mathcal{N}$ involve $\mathfrak{h}''$ and hence $p''$,
requires the additional regularity $p\in C^2((0,\infty))$, imposed
explicitly at the beginning of that section. Only the closure attempt
at the end of Section \ref{sec:Lambda-N} requires, further, the
linear-viscosity hypothesis of Assumption \ref{ass:linear-viscosity},
introduced and flagged there explicitly.

Throughout, a \emph{regular solution} of \eqref{eq:CPE} on a time
interval $[0,T]$, $T>0$, means a
strictly positive solution $(\rho,\bm{u},v)$, periodic in $x$ and
satisfying the stated boundary conditions, which is sufficiently smooth
on $[0,T]\times\overline\Omega$ to justify all differentiations,
commutations of derivatives, traces, and integrations by parts used
below -- in particular the second-order horizontal, vertical, and mixed
derivatives of $\log\rho$ ($\nabla_x^2\log\rho$, $\partial_y^2\log\rho$,
and $\partial_y\nabla_x\log\rho$) entering
Sections~\ref{sec:Lambda-N}--\ref{sec:polytropic}, and the horizontal
differentiation of the continuity equation underlying Remark
\ref{prop:mechanism-b-obstruction}. No claim of a fixed, minimal
regularity class is made here: this is a standing convention, not a
theorem, and this note makes no claim that solutions of this regularity
exist. Whenever a specific computation requires regularity beyond this
baseline -- as for instance in Lemma \ref{lem:wdj-cross-term-sign},
where two $z$-derivatives of $\xi\bm{u}$ and $w$ are used -- this is
stated explicitly at the point of use. Where a genuine weak solution
is needed (Definition \ref{def:weak-solution} on), the regularity class
required is stated explicitly and independently of this convention.
With the model fixed, we turn to the object organizing its energy
structure.

\section{Hydrostatic first integral}
\label{sec:hydrostatic-integral}

Locating the structure announced in the Introduction requires first
constructing the object it is a statement about. This section
assembles it in two steps: a pressure potential and its enthalpy
derivative, standard constructions attached to the pressure law alone
and used throughout the mathematical theory of compressible
Navier--Stokes flows \cite{Lions,Feireisl}
(Definition \ref{def:pressure-potential}); and the
geopotential, fixed by gravity alone (Definition
\ref{def:hydrostatic-potential}). Neither step involves the
hydrostatic balance \eqref{eq:intro-hydrostatic}. What does -- and what
singles out their particular combination, the Montgomery potential
$\mathfrak M:=\mathfrak h+\Phi$ -- is Proposition \ref{lem:first-integral}
below: \eqref{eq:intro-hydrostatic} forces this sum to be independent of
the vertical coordinate. $\mathfrak M$ is therefore
not chosen for convenience; among combinations obtained by adding a
function of $\rho$ alone to a function of $y$ alone, it is, up to the
single affine reparametrization made precise in Proposition
\ref{prop:montgomery-additive-uniqueness} below, the only one the
hydrostatic balance renders vertically constant -- a qualification that
matters, since a nonlinear function $F(\mathfrak M)$ of $\mathfrak M$
alone is also vertically constant without being of this additive form,
so vertical constancy alone does not single out $\mathfrak M$ among
\emph{all} functions of $\rho$ and $y$. Its
constancy organizes every result of this note. All computations in
this section are carried out for regular solutions with $\rho>0$.

\begin{proposition}[Additive uniqueness of the Montgomery potential]
\label{prop:montgomery-additive-uniqueness}
Let $A\in C^1((0,\infty))$ and $B\in C^1((0,H))$. Assume that, for
every open subinterval $I\subset(0,H)$ and every positive $C^1$
hydrostatic profile $\rho:I\to(0,\infty)$ satisfying
$\partial_yp(\rho)=-g\rho$ on $I$, the quantity $A(\rho(y))+B(y)$ is
independent of $y\in I$ -- a hypothesis on local profiles, imposed on
every subinterval rather than only on profiles spanning the full
height $(0,H)$, so that it is never vacuous regardless of how
restricted the range of $\mathfrak h$ is under Assumption
\ref{ass:pressure} (Remark \ref{rem:montgomery-additive-local}). Then
there exist constants $c,d_1,d_2\in\mathbb R$
such that
\[
A(\rho)=c\,\mathfrak h(\rho)+d_1,
\qquad
B(y)=c\,\Phi(y)+d_2,
\]
and consequently $A(\rho)+B(y)=c\,\mathfrak M+C$ with $C:=d_1+d_2$. The
Montgomery potential is thus the unique vertically-constant combination
of a function of $\rho$ alone and a function of $y$ alone, up to the
multiplicative constant $c$ (fixed to $1$ by the normalization
$\mathfrak M=\mathfrak h+\Phi$) and an additive constant $C$ left
entirely free by the hypothesis -- the hypothesis constrains only the
sum $d_1+d_2$, not $d_1,d_2$ individually. This does not extend to
nonlinear functions of $\mathfrak M$: $F(\mathfrak M)$ for nonlinear $F$
is vertically constant too, but is generally not expressible as
$A(\rho)+B(y)$ for any choice of $A,B$, so it is not a counterexample to
the statement above -- only to the stronger, unqualified claim that
$\mathfrak M$ is the unique function of $\rho$ and $y$ with this
property.
\end{proposition}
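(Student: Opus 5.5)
Recall $\mathfrak h'(\rho)=p'(\rho)/\rho$ and $\Phi(y)=gy$ (Definitions \ref{def:pressure-potential}, \ref{def:hydrostatic-potential}). A $C^1$ positive profile $\rho$ is hydrostatic exactly when $p'(\rho)\rho_y=-g\rho$, i.e. $\rho_y=-g\rho/p'(\rho)=-g/\mathfrak h'(\rho)$, equivalently $\partial_y\mathfrak h(\rho)=-g=-\Phi'$. The plan is to differentiate the hypothesis $A(\rho(y))+B(y)=\mathrm{const}$ along such profiles and exploit the freedom to choose them.

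\textbf{Step 1 (supply of profiles).} For any $y_0\in(0,H)$ and any $\rho_0>0$, I construct a local hydrostatic profile through $(y_0,\rho_0)$. Solve the scalar ODE $\rho'=-g\rho/p'(\rho)$ with $\rho(y_0)=\rho_0$. The right side is only continuous ($p\in C^1$), so I avoid Peano/uniqueness issues by inverting: $\mathfrak h$ is $C^1$ with $\mathfrak h'>0$ on $(0,\infty)$, hence a $C^1$ diffeomorphism onto an open interval $J$. Set $\rho(y):=\mathfrak h^{-1}\bigl(\mathfrak h(\rho_0)-g(y-y_0)\bigr)$, defined and $C^1$ on a small open interval $I\ni y_0$ (since $\mathfrak h(\rho_0)$ is interior to $J$). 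It satisfies $\partial_y\mathfrak h(\rho)=-g$, hence the hydrostatic balance. This is precisely why the hypothesis is imposed on every subinterval, as flagged in Remark \ref{rem:montgomery-additive-local}.

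\textbf{Step 2 (differentiate).} On $I$, $0=A'(\rho)\rho_y+B'(y)=-gA'(\rho)/\mathfrak h'(\rho)+B'(y)$. Evaluating at $y_0$ gives
\[
\frac{A'(\rho_0)}{\mathfrak h'(\rho_0)}=\frac{B'(y_0)}{g}\qquad\text{for all }\rho_0>0,\ y_0\in(0,H).
\]
The left side depends only on $\rho_0$ and the right only on $y_0$, so both equal a constant $c$. Hence $A'=c\,\mathfrak h'$ on $(0,\infty)$ and $B'=cg=c\Phi'$ on $(0,H)$. Both domains are connected, so integrating yields $A=c\mathfrak h+d_1$ and $B=c\Phi+d_2$, and the sum is $c\mathfrak M+C$. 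Conversely, any such pair satisfies the hypothesis by Proposition \ref{lem:first-integral}, which shows that only $d_1+d_2$ is constrained. The closing remark on $F(\mathfrak M)$ is commentary: for nonlinear $F$, the mixed derivative $\partial_\rho\partial_y F(\mathfrak h(\rho)+gy)=F''\mathfrak h' g$ is nonzero, so $F(\mathfrak M)$ is not additively separable. I would note this in one line.

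\textbf{Main obstacle.} The only delicate point is Step 1: the profile must pass through an \emph{arbitrary} pair $(y_0,\rho_0)$, so that separation of variables holds on the full product $(0,\infty)\times(0,H)$. This must be done under mere $C^1$ regularity of $p$, which is why I use the explicit inversion of $\mathfrak h$ rather than ODE existence theory. One must also ensure $I$ stays inside $(0,H)$ and $\rho$ stays positive. Both follow from openness of $J$ and continuity of $\mathfrak h^{-1}$, whatever the (possibly bounded) range of $\mathfrak h$.
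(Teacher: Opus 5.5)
Your proposal is correct and follows essentially the same route as the paper. You build the local profile $\rho(y)=\mathfrak h^{-1}\bigl(\mathfrak h(\rho_0)-g(y-y_0)\bigr)$, which is the paper's $\mathfrak h^{-1}(m-\Phi(y))$ with $m=\mathfrak h(\rho_\star)+\Phi(y_0)$, then differentiate at $y_0$ and separate variables to get $A'/\mathfrak h'=B'/g=c$, and close with the converse via Proposition~\ref{lem:first-integral}. Your added mixed-derivative remark on $F(\mathfrak M)$ is a harmless complement to the commentary the paper leaves unproved.
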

\begin{proof}
Fix $y_0\in(0,H)$ and $\rho_\star\in(0,\infty)$; we produce a local
profile through $y_0$ with $\rho(y_0)=\rho_\star$. Under Assumption
\ref{ass:pressure}, $\mathfrak h'=p'/\rho>0$ on $(0,\infty)$, so
$\mathfrak h$ is a $C^1$ strictly increasing bijection of $(0,\infty)$
onto its range $\operatorname{ran}\mathfrak h$, an open interval of
$\mathbb R$ (the continuous, strictly monotonic image of an open
interval), with $C^1$ inverse $\mathfrak h^{-1}$ on
$\operatorname{ran}\mathfrak h$ by the inverse function theorem. Set
$m:=\mathfrak h(\rho_\star)+\Phi(y_0)$, so that $m-\Phi(y_0)=\mathfrak
h(\rho_\star)$ is an interior point of $\operatorname{ran}\mathfrak h$;
by continuity of $\Phi$, there is $\varepsilon>0$ such that
$I:=(y_0-\varepsilon,y_0+\varepsilon)\cap(0,H)$ is a neighborhood of
$y_0$ in $(0,H)$ on which $m-\Phi(y)\in\operatorname{ran}\mathfrak h$
throughout. Then
\[
\rho(y):=\mathfrak h^{-1}\bigl(m-\Phi(y)\bigr),\qquad y\in I,
\]
is a well-defined, positive, $C^1$ profile on $I$ with
$\mathfrak h(\rho(y))+\Phi(y)\equiv m$ on $I$. Differentiating this
identity in $y$ gives $\mathfrak h'(\rho(y))\rho'(y)+g=0$; since
$\mathfrak h'(\rho)=p'(\rho)/\rho$ (Definition
\ref{def:pressure-potential}), this reads
$p'(\rho(y))\rho'(y)/\rho(y)=-g$, i.e.\ $\partial_yp(\rho(y))=-g\rho(y)$
on $I$ -- the converse of Proposition \ref{lem:first-integral},
obtained here directly rather than by citing that proposition, whose
statement is the forward implication only. Also $\rho(y_0)=\rho_\star$
by construction: unlike a profile required to span the full height
$(0,H)$, this local one exists for \emph{every} $\rho_\star\in(0,\infty)$
and every $y_0\in(0,H)$, with no constraint relating the two.

Differentiating $A(\rho(y))+B(y)=\mathrm{const}$ on $I$ and using
$\rho'(y)=-g/\mathfrak h'(\rho(y))$, which follows from
$\mathfrak h(\rho(y))+\Phi(y)\equiv m$ exactly as in the proof of
Proposition \ref{lem:first-integral}, evaluation at $y=y_0$ gives
\begin{equation}
\label{eq:additive-uniqueness-ode}
g\,\frac{A'(\rho_\star)}{\mathfrak h'(\rho_\star)}=B'(y_0).
\end{equation}
Since $y_0\in(0,H)$ and $\rho_\star\in(0,\infty)$ were arbitrary and
mutually independent, the left side of \eqref{eq:additive-uniqueness-ode}
depends only on $\rho_\star$ and the right side only on $y_0$; equality
for every pair $(\rho_\star,y_0)$ forces both to equal one and the same
constant $c\in\mathbb R$:
\[
\frac{A'(\rho)}{\mathfrak h'(\rho)}=c\ \text{ for every }\rho\in(0,\infty),
\qquad
B'(y)=gc=c\,\Phi'(y)\ \text{ for every }y\in(0,H).
\]
Hence $A=c\,\mathfrak h+d_1$ and $B=c\,\Phi+d_2$ for constants
$d_1,d_2\in\mathbb R$. Conversely, for any $d_1,d_2$,
$A(\rho)+B(y)=c\,\mathfrak M+(d_1+d_2)$ is independent of $y$ on every
subinterval $I$, since $\mathfrak M$ itself is (Proposition
\ref{lem:first-integral}, applied on $I$): the hypothesis constrains
only $c$ and the sum $d_1+d_2$, not $d_1,d_2$ individually.
\end{proof}

\begin{remark}[Why the hypothesis is stated locally]
\label{rem:montgomery-additive-local}
Under the general Assumption \ref{ass:pressure} alone, $\mathfrak h$
need not be surjective onto $\mathbb R$, and its range could in
principle be an interval narrower than $[0,gH]$: were the hypothesis of
Proposition \ref{prop:montgomery-additive-uniqueness} instead imposed
only on profiles $\rho:(0,H)\to(0,\infty)$ defined on the \emph{full}
height, it would require $m-\Phi(y)\in\operatorname{ran}\mathfrak h$
simultaneously for every $y\in(0,H)$, which can fail to hold for any
$m$ once $g H$ exceeds the length of $\operatorname{ran}\mathfrak h$ --
making the hypothesis vacuous, and the proposition's conclusion
unsupported, for such a law. The local formulation used above sidesteps
this entirely: the profile constructed in the proof only has to be
defined on an arbitrarily small neighborhood of the point $y_0$ at
which it is used, which is always possible, for every target density
$\rho_\star\in(0,\infty)$, since $\operatorname{ran}\mathfrak h$ is open
and $\Phi$ is continuous. The two formulations agree whenever
$\operatorname{ran}\mathfrak h$ is wide enough relative to $gH$ (in
particular whenever $\mathfrak h$ is a bijection onto all of $\mathbb
R$, as at the isothermal law); the local formulation is the one valid
under Assumption \ref{ass:pressure} alone, with no such restriction.
\end{remark}

\begin{definition}[Pressure potential and barotropic enthalpy]
\label{def:pressure-potential}
For a reference density $\rho_\star\ge0$ such that the integral
converges,
\begin{equation}
\label{eq:pressure-potential}
e(\rho)=\rho\int_{\rho_\star}^\rho\frac{p(\varrho)}{\varrho^2}\,d\varrho,\qquad \rho>0.
\end{equation}
Following the customary notation for the specific enthalpy, but set in
Fraktur to reserve the roman $h$ for the height of the transformed
domain introduced in \S\ref{subsec:wdj-transfer},
$\mathfrak{h}(\rho):=e'(\rho)$, so that $\mathfrak{h}'(\rho)=p'(\rho)/\rho$ and
\begin{equation}
\label{eq:pressure-enthalpy-duality}
e(\rho)=\rho \mathfrak{h}(\rho)-p(\rho).
\end{equation}
\label{def:enthalpy}
\end{definition}

If $p\in C^1((0,\infty))$, then $e\in C^2((0,\infty))$, with
\begin{equation}
\label{eq:e-identities}
\rho e'(\rho)-e(\rho)=p(\rho),\qquad e''(\rho)=\frac{p'(\rho)}\rho:
\end{equation}
indeed, writing $I(\rho)=\int_{\rho_\star}^\rho p(\varrho)/\varrho^2\,d\varrho$ so that
$e=\rho I$ and $I'=p(\rho)/\rho^2$, $e'=I+p(\rho)/\rho$ gives
$\rho e'-e=p(\rho)$, and differentiating again gives $e''(\rho)=p'(\rho)/\rho$.
In particular $e$ is strictly convex on $(0,\infty)$ under Assumption
\ref{ass:pressure}.

\begin{definition}[Geopotential and hydrostatic potential]
\label{def:hydrostatic-potential}
Let
\begin{equation}
\label{eq:geopotential-def}
\Phi(y):=gy
\end{equation}
denote the \emph{geopotential} -- the gravitational potential energy
per unit mass at height $y$ -- and set
\begin{equation}
\label{eq:hydrostatic-potential-def}
\mathfrak{M}(t,x,y):=\mathfrak{h}(\rho(t,x,y))+\Phi(y).
\end{equation}
Under the hydrostatic balance \eqref{eq:intro-hydrostatic}, $\mathfrak M$
coincides with the classical \emph{Montgomery potential} of geophysical
fluid dynamics -- specific enthalpy plus geopotential -- used since
Montgomery's original work \cite{Montgomery1937} in isentropic and
isopycnal-coordinate formulations of atmosphere and ocean dynamics; we
denote it $\mathfrak M$, in Fraktur and matching $\mathfrak h$, rather
than the customary roman $M$, to avoid any collision with the total
mass $M$ of Section \ref{sec:total-energy} and \S\ref{subsec:wdj-transfer}.
\end{definition}

Definition \ref{def:pressure-potential} fixes $e$, and hence $\mathfrak
h=e'$ and $\mathfrak M=\mathfrak h+\Phi$, only once a reference density
$\rho_\star$ is chosen; changing $\rho_\star$ shifts $\mathfrak h$, and
therefore $\mathfrak M$, by an additive constant (the proof of Lemma
\ref{lem:relative-coercive} below records this shift explicitly). Like
any potential, $\mathfrak M$ is thus pinned down up to one additive
constant, not left ambiguous among an arbitrary family of unrelated
choices, and every structural statement of this note is insensitive to
that constant: the vertical constancy and horizontal-gradient
identities of Proposition \ref{lem:first-integral} below and the
structural functions of Definition \ref{def:Lambda-N} are phrased
through differences or derivatives of $\mathfrak M,\mathfrak h$ alone,
and the total mechanical energy identity of Theorem
\ref{thm:total-energy} below shifts, under a change of $\rho_\star$,
only by a time-independent multiple of the conserved total mass, which
cancels identically between its two sides. What does depend on
$\rho_\star$ is any formula recovering $\rho$ explicitly from the
numerical value of $\mathfrak M$ rather than from its differences alone:
the polytropic and isothermal profile formulas of Propositions
\ref{prop:polytropic-profile} and \ref{prop:isothermal-profile} below,
and the identity $\xi=\exp(\mathfrak M/c^2)$ of Proposition
\ref{prop:ens-montgomery} below, are each stated for the one $\rho_\star$
fixed within their respective subsections -- $\rho_\star=0$ for the
polytropic law, $\rho_\star=\mathrm e$ for the isothermal one
(Proposition \ref{prop:isothermal-potential}) -- and each would acquire
an explicit multiplicative or additive constant under a different
choice.

\begin{proposition}[Vertical constancy and horizontal gradient]
\label{lem:first-integral}
\label{lem:horizontal-gradient}
If $\rho>0$ satisfies \eqref{eq:intro-hydrostatic}, then $\partial_y\mathfrak{M}=0$:
there is a function, still denoted $\mathfrak{M}=\mathfrak{M}(t,x)$, independent of $y$
-- the first hydrostatic integral -- such that
\begin{equation}
\label{eq:first-integral}
\mathfrak{h}(\rho(t,x,y))+\Phi(y)=\mathfrak{M}(t,x).
\end{equation}
Moreover
\begin{equation}
\label{eq:horizontal-gradient}
\rho\nabla_x\mathfrak{M}=\nabla_xp(\rho).
\end{equation}
\end{proposition}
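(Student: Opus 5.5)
The plan is a direct chain-rule computation, followed by a short justification that $\mathfrak M$ may be viewed as a function of $(t,x)$ alone.

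\textbf{Vertical constancy.} Since $\rho>0$ and $p\in C^1((0,\infty))$, Definition \ref{def:pressure-potential} gives $\mathfrak h\in C^1((0,\infty))$ with $\mathfrak h'(\rho)=p'(\rho)/\rho$. For a regular solution, $\rho$ is differentiable in $y$. The chain rule then gives
\[
\partial_y\mathfrak M=\mathfrak h'(\rho)\,\partial_y\rho+\Phi'(y)=\frac{p'(\rho)\,\partial_y\rho}{\rho}+g=\frac{\partial_yp(\rho)+g\rho}{\rho}.
\]
The numerator vanishes by \eqref{eq:intro-hydrostatic}, so $\partial_y\mathfrak M=0$ on $\Omega$. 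The vertical interval $(0,H)$ is connected, so for each fixed $(t,x)$ the map $y\mapsto\mathfrak M(t,x,y)$ is constant. This defines the function $\mathfrak M(t,x)$ and yields \eqref{eq:first-integral}.

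\textbf{Horizontal gradient.} Because $\Phi$ depends on $y$ only, $\nabla_x\mathfrak M=\nabla_x\mathfrak h(\rho)=\mathfrak h'(\rho)\nabla_x\rho=p'(\rho)\nabla_x\rho/\rho$. Multiplying by $\rho$ gives $\rho\nabla_x\mathfrak M=p'(\rho)\nabla_x\rho=\nabla_xp(\rho)$, which is \eqref{eq:horizontal-gradient}. Since $\mathfrak M$ is independent of $y$, it makes no difference whether one differentiates the three-variable or the two-variable version.

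\textbf{Main point of care.} No step is a real obstacle. The only care needed is regularity:
\begin{itemize}
\item positivity of $\rho$, so that division by $\rho$ and the chain rule for $\mathfrak h$ on $(0,\infty)$ are legitimate;
\item connectedness of the vertical fibre, so that a vanishing $y$-derivative forces constancy;
\item differentiability of the resulting function in $x$, which holds because $\mathfrak M(t,x)=\mathfrak h(\rho(t,x,y_0))+gy_0$ for any fixed $y_0$.
\end{itemize}
All three are guaranteed by the standing notion of regular solution.
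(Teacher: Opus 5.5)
Your proposal is correct and follows the paper's proof essentially verbatim. The first step is the same chain-rule computation $\partial_y\mathfrak M=\mathfrak h'(\rho)\partial_y\rho+g=(\partial_yp(\rho)+g\rho)/\rho=0$, and the second is the same multiplication of $\nabla_x\mathfrak M=\mathfrak h'(\rho)\nabla_x\rho$ by $\rho$. Your remarks on the connectedness of the vertical fibre and on the $x$-differentiability of the reduced $\mathfrak M(t,x)$ simply make explicit what the paper leaves implicit.
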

\begin{proof}
Using $\mathfrak{h}'(\rho)=p'(\rho)/\rho$ and \eqref{eq:intro-hydrostatic},
\[
\partial_y\mathfrak{M}=\mathfrak{h}'(\rho)\partial_y\rho+g
=\frac{p'(\rho)}\rho\,\partial_y\rho+g
=\frac{\partial_yp(\rho)}\rho+g
=\frac{-g\rho}\rho+g=0,
\]
giving \eqref{eq:first-integral}. Since $\Phi(y)$ is independent of $x$,
$\nabla_x\mathfrak{M}=\mathfrak{h}'(\rho)\nabla_x\rho=p'(\rho)\nabla_x\rho/\rho$;
multiplying by $\rho$ gives \eqref{eq:horizontal-gradient}.
\end{proof}

\begin{remark}[Weak formulation of the hydrostatic relation]
\label{rem:hydrostatic-weak}
The algebraic relation \eqref{eq:intro-hydrostatic}, having no time
derivative, is not tested against a space-time function $\varphi$ the
way the continuity and momentum equations are; this does not mean it
has no distributional formulation of its own. For fixed $t$, viewing
$\rho(t,\cdot,\cdot)$ and $p(\rho(t,\cdot,\cdot))$ as elements of
$\mathcal D'(\Omega)$, \eqref{eq:intro-hydrostatic} states precisely
that
\begin{equation}
\label{eq:HB-weak}
\int_\Omega p(\rho)\,\partial_y\chi\,dx\,dy=g\int_\Omega\rho\,\chi\,dx\,dy
\qquad\text{for every }\chi\in C_c^\infty(\Omega),
\tag{HB-weak}
\end{equation}
which makes sense as soon as $p(\rho)\in L^1_{loc}(\Omega)$ and
$\rho\in L^1_{loc}(\Omega)$, with no further regularity of $\rho$
assumed. Away from vacuum -- more precisely, on any vertical fiber
$\{(t,x)\}\times(0,H)$ on which $\rho$ is bounded away from $0$ and
$+\infty$ -- \eqref{eq:HB-weak} is equivalent to the pointwise statement
$\rho(t,x,y)=\mathfrak{h}^{-1}(\mathfrak{M}(t,x)-\Phi(y))$ for a.e.\
$(t,x,y)$, for some measurable $\mathfrak M(t,\cdot)$ independent of
$y$. This equivalence is not a formal restatement -- it moves from a
constraint on $p(\rho)$ to one on the differently nonlinear
$\mathfrak h(\rho)$ -- and is proved in full, by an explicit
Lipschitz-composition argument rather than by invoking Sobolev
embedding formally, in Lemma \ref{lem:HB-weak-pointwise} below. The
non-degeneracy hypothesis it requires is exactly what can fail at a
vacuum region: there,
\eqref{eq:HB-weak} continues to hold verbatim (both sides make sense, and
vanish, on $\{\rho=0\}$, since $p(0)=0$), but its equivalence with the
pointwise formula fails, and does so differently at the two laws treated
in this note. At the polytropic law, $\mathfrak h$ maps $(0,\infty)$ onto
$(0,\infty)$ only, so a vacuum point corresponds to $\mathfrak M-\Phi$
reaching the bottom of this range, and the pointwise formula extends
continuously through it (Proposition \ref{prop:polytropic-profile}). At
the isothermal law, by contrast, $\mathfrak h$ is a bijection of
$(0,\infty)$ onto all of $\mathbb R$ (Proposition
\ref{prop:isothermal-profile}), so no finite value of
$\mathfrak M(t,x)-\Phi(y)$ can represent a vacuum point through this
formula: on a vacuum region at the isothermal law, \eqref{eq:HB-weak}
itself, not any pointwise reformulation of it, is the operative
constraint. This is why Definition \ref{def:weak-solution}(i) below
takes \eqref{eq:HB-weak} directly, rather than a pointwise formula for
$\rho$, as the defining constraint on $\rho$ at the level of a weak
solution -- a formulation meaningful, and automatically satisfied, on
vacuum regions for every pressure law under Assumption
\ref{ass:pressure}, with no law-specific convention required.
\end{remark}

\begin{lemma}[Distributional--pointwise equivalence away from vacuum]
\label{lem:HB-weak-pointwise}
Let $\rho\ge0$ satisfy $\rho,p(\rho)\in L^1_{loc}(\Omega)$ and
\eqref{eq:HB-weak} for a.e.\ $t\in(0,T)$, and suppose that, for a.e.\
$(t,x)\in(0,T)\times\mathbb T^2$, there exist $0<m(t,x)\le
M(t,x)<\infty$ with
\begin{equation}
\label{eq:non-degenerate-fiber}
m(t,x)\le\rho(t,x,y)\le M(t,x)\qquad\text{for a.e.\ }y\in(0,H).
\end{equation}
Then there exists a measurable $\mathfrak M:(0,T)\times\mathbb
T^2\to\mathbb R$ such that
\begin{equation}
\label{eq:pointwise-from-lemma}
\rho(t,x,y)=\mathfrak h^{-1}\bigl(\mathfrak M(t,x)-\Phi(y)\bigr)
\qquad\text{for a.e.\ }(t,x,y)\in(0,T)\times\Omega.
\end{equation}
Conversely, if $\rho$ has the form \eqref{eq:pointwise-from-lemma} for
some measurable $\mathfrak M$ with $\mathfrak M(t,x)-\Phi(\cdot)$
ranging in the domain of $\mathfrak h^{-1}$, then $\rho$ satisfies
\eqref{eq:HB-weak}.
\end{lemma}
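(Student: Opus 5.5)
Fix a time $t$ outside a null set for which \eqref{eq:HB-weak} holds. By Fubini, for a.e.\ $x\in\mathbb T^2$ the bounds \eqref{eq:non-degenerate-fiber} hold on the fiber, and $\rho(t,x,\cdot),p(\rho(t,x,\cdot))\in L^1(0,H)$. The plan is to reduce \eqref{eq:HB-weak} to a one-dimensional statement on such fibers. We test with product functions $\chi(x,y)=\alpha(x)\beta(y)$, $\alpha\in C^\infty(\mathbb T^2)$, $\beta\in C_c^\infty(0,H)$. This gives
\[
\int_{\mathbb T^2}\alpha(x)\Bigl(\int_0^H\bigl(p(\rho)\beta'-g\rho\beta\bigr)\,dy\Bigr)dx=0
\]
for all $\alpha$. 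Hence, for each $\beta$, the inner integral vanishes for a.e.\ $x$. Taking $\beta$ in a countable set dense in $C_c^1(0,H)$ (in the $C^1$ norm on compacts), and using the $L^1$ integrability, we get a single full-measure set of fibers. On each of them, $\partial_y p(\rho)=-g\rho$ holds in $\mathcal D'(0,H)$.

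On such a fiber, $q:=p(\rho(t,x,\cdot))$ has distributional derivative $-g\rho\in L^\infty(0,H)$, since $\rho\le M$. So $q$ coincides a.e.\ with a Lipschitz function $\tilde q$, and $\tilde q$ takes values in the compact interval $[p(m),p(M)]\subset(0,\infty)$ (a.e.\ values lie there and $\tilde q$ is continuous).

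Next we pass from $p$ to $\mathfrak h$. Set $G:=\mathfrak h\circ p^{-1}$ on $[p(m),p(M)]$. It is $C^1$ there, with $G'(s)=\mathfrak h'(p^{-1}(s))/p'(p^{-1}(s))=1/p^{-1}(s)$, which is bounded by $1/m$; in particular $G$ is Lipschitz there. This is the key step. The chain rule for a $C^1$ function composed with a Lipschitz (hence absolutely continuous) function gives that $G\circ\tilde q$ is Lipschitz, with a.e.\ derivative
\[
\frac{\tilde q'}{p^{-1}(\tilde q)}=\frac{-g\rho}{\rho}=-g .
\]
Therefore $\mathfrak h(\rho(t,x,y))+\Phi(y)$ is a.e.\ equal to a constant on the fiber, which we call $\mathfrak M(t,x)$. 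Inverting $\mathfrak h$, which is strictly increasing by Assumption \ref{ass:pressure}, yields \eqref{eq:pointwise-from-lemma} on that fiber.

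Measurability of $\mathfrak M$ is obtained by writing
\[
\mathfrak M(t,x)=\frac1H\int_0^H\bigl(\mathfrak h(\rho)+\Phi\bigr)\,dy .
\]
This is a measurable function of $(t,x)$ by Fubini, since $\mathfrak h(\rho)$ is measurable and bounded on the good fibers. Off the null set of bad $(t,x)$ we define $\mathfrak M:=0$.

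For the converse, $\rho=\mathfrak h^{-1}(\mathfrak M-\Phi)$ is $C^1$ in $y$ on each fiber, because $\mathfrak h^{-1}$ is $C^1$ with $\mathfrak h'>0$. Differentiating $\mathfrak h(\rho)=\mathfrak M-gy$ exactly as in the proof of Proposition \ref{prop:montgomery-additive-uniqueness} gives $\partial_y p(\rho)=-g\rho$ pointwise. Integrating by parts in $y$ against $\chi$ on each fiber, then integrating in $x$, yields \eqref{eq:HB-weak}. The needed local integrability follows from continuity on fibers together with the $L^1_{loc}$ hypothesis; we take this as implicit in the statement.

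The main obstacle is the reduction from the 3D distributional identity to fiberwise identities valid on a common full-measure set, together with the choice of Lipschitz representative. The chain rule itself is harmless once the two-sided bound keeps $G$ Lipschitz, and that bound is precisely what fails at vacuum.
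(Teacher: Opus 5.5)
Your proof is correct and follows essentially the same route as the paper. Both arguments test with products to reduce to a common full-measure set of fibers via a countable dense family, take a Lipschitz representative of $p(\rho)$ from $\partial_yp(\rho)=-g\rho\in L^\infty(0,H)$, apply a chain rule to get $(\mathfrak h(\rho)+\Phi)'=0$ a.e., and prove the converse by differentiating $\mathfrak h(\rho)=\mathfrak M-gy$. Your two variations are small improvements rather than a different strategy. First, composing with the single $C^1$ map $G=\mathfrak h\circ p^{-1}$, whose derivative $1/p^{-1}(s)\le 1/m$ is explicit, avoids the paper's detour through the lower bound $p'\ge\delta$ on $[m,M]$ and the intermediate representative $R=p^{-1}\circ P$. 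Second, defining $\mathfrak M$ as the vertical average of $\mathfrak h(\rho)+\Phi$ gives its measurability directly, without having to choose a single $y_0$ that avoids every fiber's exceptional null set.
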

\begin{proof}
\emph{Reduction to a fixed vertical fiber.} Fix $t$ in the full-measure
set where \eqref{eq:HB-weak} and \eqref{eq:non-degenerate-fiber} hold.
Testing \eqref{eq:HB-weak} against $\chi(x,y)=\varphi(x)\psi(y)$ for
$\varphi\in C_c^\infty(\mathbb T^2)$, $\psi\in C_c^\infty(0,H)$, and
using Fubini (valid since $\rho,p(\rho)\in L^1_{loc}(\Omega)$),
\[
\int_{\mathbb T^2}\varphi(x)\Bigl[\int_0^Hp(\rho(x,y))\psi'(y)\,dy
-g\int_0^H\rho(x,y)\psi(y)\,dy\Bigr]dx=0
\]
for every $\varphi$; the bracket, for each fixed $\psi$, is an
$L^1_{loc}(\mathbb T^2)$ function of $x$ and therefore vanishes for
a.e.\ $x$, and running $\psi$ over a countable set dense in
$C_c^\infty(0,H)$ produces a single full-measure set of $x$ on which
\begin{equation}
\label{eq:HB-weak-fiber}
\int_0^Hp(\rho(x,y))\,\psi'(y)\,dy=g\int_0^H\rho(x,y)\,\psi(y)\,dy
\qquad\text{for every }\psi\in C_c^\infty(0,H),
\end{equation}
i.e.\ $\partial_yp(\rho(x,\cdot))=-g\rho(x,\cdot)$ in
$\mathcal D'(0,H)$. Fix such an $x$, also satisfying
\eqref{eq:non-degenerate-fiber} with constants $m,M$.

\emph{From $p(\rho)$ to a Lipschitz representative of $\rho$.} By
\eqref{eq:HB-weak-fiber} and $\rho(x,\cdot)\le M$ a.e.,
$\partial_yp(\rho(x,\cdot))\in L^\infty(0,H)$, so
$p(\rho(x,\cdot))\in W^{1,\infty}(0,H)$ and admits a Lipschitz
representative $P$ on $(0,H)$ with $P'=-g\rho(x,\cdot)$ a.e. Since
$P=p(\rho(x,\cdot))$ a.e.\ and $\rho(x,\cdot)\in[m,M]$ a.e., continuity
of $P$ forces $P(y)\in[p(m),p(M)]$ for \emph{every} $y$. Under
Assumption \ref{ass:pressure}, $p$ is a $C^1$ strictly increasing
bijection of $[m,M]$ onto $[p(m),p(M)]$ with $p'$ continuous and
bounded below by some $\delta>0$ on the compact $[m,M]$, so
$p^{-1}:[p(m),p(M)]\to[m,M]$ is Lipschitz. Hence $R:=p^{-1}\circ P$ is
a Lipschitz function on $(0,H)$, equal to $\rho(x,\cdot)$ a.e.

\emph{From $\rho$ to $\mathfrak h(\rho)$.} By \eqref{eq:e-identities},
$\mathfrak h'=p'/\rho$ is continuous and bounded on $[m,M]$ (as $m>0$),
so $\mathfrak h$ is Lipschitz on $[m,M]$, and $\widetilde H:=\mathfrak
h\circ R$ is Lipschitz on $(0,H)$, equal to $\mathfrak h(\rho(x,\cdot))$
a.e. At every point where both $R$ and $P$ are classically
differentiable -- a full-measure set by Rademacher's theorem, on which
the classical derivative agrees with the a.e.\ weak derivative already
computed -- the chain rule gives $P'=p'(R)R'$, and since
$R(y)\in[m,M]$ throughout,
\[
\widetilde H'=\mathfrak h'(R)R'=\frac{p'(R)}{R}\,R'
=\frac1R\,P'=\frac1R\,(-gR)=-g.
\]
Thus $(\widetilde H+\Phi)'=0$ a.e.\ on $(0,H)$; since
$\widetilde H+\Phi$ is Lipschitz, hence absolutely continuous, a
vanishing a.e.\ derivative forces it to be constant. Set $\mathfrak
M(t,x):=\widetilde H(y)+\Phi(y)$ for any (equivalently, every)
$y\in(0,H)$; then $\mathfrak h(\rho(x,y))+\Phi(y)=\mathfrak M(t,x)$ for
every $y$, recovering \eqref{eq:pointwise-from-lemma} for a.e.\ $y$
since $R=\mathfrak h^{-1}(\widetilde H)$ equals $\rho(x,\cdot)$ a.e.
Measurability of $\mathfrak M$ in $(t,x)$ follows by writing $\mathfrak
M(t,x)=\mathfrak h(\rho(t,x,y_0))+\Phi(y_0)$ at a fixed $y_0$ for which
$\rho(\cdot,\cdot,y_0)$ is jointly measurable, which holds for a.e.\
$y_0$ by Fubini.

\emph{Converse.} Suppose $\rho(t,x,y)=\mathfrak h^{-1}(\mathfrak
M(t,x)-\Phi(y))$ a.e.\ with $\mathfrak M(t,x)-\Phi(\cdot)$ valued in
the range of $\mathfrak h$. Set $Q:=p\circ\mathfrak h^{-1}$, of class
$C^1$ wherever $\mathfrak h^{-1}$ is, since $\mathfrak h^{-1}$ is $C^1$
by the inverse function theorem ($\mathfrak h'>0$). By $\mathfrak
h'(\rho)=p'(\rho)/\rho$ and the inverse-derivative rule,
$Q'(z)=p'(\mathfrak h^{-1}(z))(\mathfrak h^{-1})'(z)=p'(\rho)/\mathfrak
h'(\rho)=\rho$ at $\rho=\mathfrak h^{-1}(z)$, so
$p(\rho(t,x,y))=Q(\mathfrak M(t,x)-\Phi(y))$ is, for fixed $(t,x)$, a
$C^1$ function of $y$ with $\partial_yp(\rho(t,x,y))=-Q'(\mathfrak
M(t,x)-\Phi(y))\,\Phi'(y)=-g\rho(t,x,y)$ classically; integrating by
parts against $\chi\in C_c^\infty(\Omega)$ and integrating in $x$
recovers \eqref{eq:HB-weak}.
\end{proof}

\begin{proposition}[Integrated static energy balance]
\label{prop:integrated-balance}
For a regular solution satisfying the boundary conditions of Section
\ref{sec:model},
\begin{equation}
\label{eq:integrated-balance}
\frac{d}{dt}\int_\Omega\bigl(e(\rho)+\Phi(y)\rho\bigr)\,dx\,dy
=\int_\Omega \bm{u}\cdot\nabla_xp(\rho)\,dx\,dy.
\end{equation}
\end{proposition}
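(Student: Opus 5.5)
We differentiate the static energy in time, then use the continuity equation and integrate by parts; the hydrostatic balance is not needed here.

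Since $\mathfrak h=e'$ and $\Phi$ does not depend on $t$, the chain rule gives
\[
\partial_t\bigl(e(\rho)+\Phi\rho\bigr)=\bigl(\mathfrak h(\rho)+\Phi\bigr)\partial_t\rho .
\]
We then substitute $\partial_t\rho=-\operatorname{div}_x(\rho\bm u)-\partial_y(\rho v)$ from \eqref{eq:CPE} and integrate over $\Omega$:
\[
\frac{d}{dt}\int_\Omega\bigl(e(\rho)+\Phi\rho\bigr)
=-\int_\Omega\bigl(\mathfrak h(\rho)+\Phi\bigr)\bigl[\operatorname{div}_x(\rho\bm u)+\partial_y(\rho v)\bigr]\,dx\,dy .
\]
The differentiation under the integral is justified by the regularity convention of Section \ref{sec:model}.

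Next we integrate by parts in each direction.

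\emph{Horizontal term.} By periodicity in $x$, and because $\Phi$ does not depend on $x$,
\[
-\int_\Omega\bigl(\mathfrak h(\rho)+\Phi\bigr)\operatorname{div}_x(\rho\bm u)
=\int_\Omega\rho\bm u\cdot\nabla_x\mathfrak h(\rho).
\]
Since $\mathfrak h'(\rho)=p'(\rho)/\rho$, we have $\rho\nabla_x\mathfrak h(\rho)=\nabla_x p(\rho)$, the same identity as \eqref{eq:horizontal-gradient}. This term therefore equals $\int_\Omega\bm u\cdot\nabla_xp(\rho)$.

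\emph{Vertical term.} The boundary contributions at $y=0,H$ vanish because $v=0$ there. This leaves
\[
\int_\Omega\rho v\,\partial_y\bigl(\mathfrak h(\rho)+\Phi\bigr)\,dx\,dy .
\]
We must show this term is zero. One route is to note that $\partial_y\bigl(\mathfrak h(\rho)+\Phi\bigr)=\partial_y\mathfrak M=0$ by Proposition \ref{lem:first-integral}, so the integral vanishes at once. A second route does not use the hydrostatic balance. Write $\rho\,\partial_y\mathfrak h(\rho)=\partial_y p(\rho)$, so the integral becomes
\[
\int_\Omega v\,\partial_y p(\rho)+g\int_\Omega\rho v .
\]
These two pieces cancel only under the hydrostatic balance. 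So either route relies on \eqref{eq:intro-hydrostatic}, and we invoke Proposition \ref{lem:first-integral}.

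The main point needing care is the bookkeeping of this vertical term. The potentially sign-indefinite contribution $g\int\rho v$ must be absorbed exactly, and it is: the integrand carries the full factor $\partial_y\mathfrak M$, which vanishes identically. The remaining steps (the time derivative of $\int e(\rho)$, and the trace terms under the boundary conditions) are routine for regular solutions with $\rho>0$.
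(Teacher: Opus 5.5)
Your proof is correct and is essentially the paper's argument: multiply the continuity equation by $\mathfrak M=\mathfrak h(\rho)+\Phi$, use $\rho\nabla_x\mathfrak h(\rho)=\nabla_xp(\rho)$ for the horizontal term, and eliminate the vertical term with $\partial_y\mathfrak M=0$ together with $v|_{y=0,H}=0$; the paper applies $\partial_y\mathfrak M=0$ first to write that term as the exact flux $\partial_y(\rho v\mathfrak M)$, whereas you integrate by parts first, which amounts to the same thing. Drop your opening remark that the hydrostatic balance is not needed: your own treatment of the vertical term shows that it vanishes precisely because of that balance.
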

\begin{proof}
Multiply the continuity equation by $\mathfrak{M}=\mathfrak{h}(\rho)+\Phi(y)$ and integrate
over $\Omega$. Since $y$ is independent of $t$,
$\mathfrak{M}\partial_t\rho=\partial_t(e(\rho)+\Phi(y)\rho)$. Horizontally,
$\mathfrak{M}\operatorname{div}_x(\rho
\bm{u})=\operatorname{div}_x(\rho\bm{u}\mathfrak{M})-\bm{u}\cdot\nabla_xp(\rho)$ by Proposition
\ref{lem:horizontal-gradient}, and the flux $\operatorname{div}_x(\rho\bm{u}\mathfrak{M})$
integrates to zero by horizontal periodicity. Vertically,
$\mathfrak{M}\partial_y(\rho v)=\partial_y(\rho v\mathfrak{M})$ since $\partial_y\mathfrak{M}=0$
(Proposition \ref{lem:first-integral}), and this flux integrates to zero by
$v|_{y=0,H}=0$. Summing the three contributions and integrating over
$\Omega$ gives \eqref{eq:integrated-balance}: unlike a direct energy
computation, no term $v\,\partial_yp(\rho)$ is estimated on its own
here, since multiplying by $\mathfrak{M}$, rather than by $\mathfrak{h}(\rho)$ alone, routed
the entire vertical contribution into the exact flux
$\partial_y(\rho v\mathfrak{M})$ just annihilated by impermeability alone.
\end{proof}

\section{Intrinsic hydrostatic energy}
\label{sec:total-energy}

Having fixed the first integral $\mathfrak M$, we now use it as a
multiplier: this is what turns the vertical constancy of the previous
section into an energy identity.

What Proposition \ref{prop:integrated-balance} isolated is exactly the
quantity a direct energy computation still has to account for; we
record that computation now, and let the two combine.

\begin{lemma}[Kinetic energy identity]
\label{lem:kinetic}
For a regular solution of \eqref{eq:CPE},
\begin{equation}
\label{eq:kinetic}
\frac{d}{dt}\int_\Omega\tfrac12\rho|\bm{u}|^2\,dx\,dy
+\int_\Omega \bm{u}\cdot\nabla_xp(\rho)\,dx\,dy
+2\int_\Omega\nu_1(\rho)|D_x\bm{u}|^2\,dx\,dy
+\int_\Omega\nu_2(\rho)|\partial_y\bm{u}|^2\,dx\,dy=0.
\end{equation}
\end{lemma}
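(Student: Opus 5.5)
We test the momentum equation of \eqref{eq:CPE} against $\bm u$ and integrate over $\Omega$, using the continuity equation to rewrite the transport part as a time derivative of kinetic energy. The plan is therefore to prove a pointwise identity for the inertial terms and then integrate the viscous terms by parts. The boundary conditions handle all flux terms: horizontal periodicity, $v=0$ at $y=0,H$, and $\partial_y\bm u=0$ at $y=0,H$.

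\emph{Step 1: inertial terms.} For a regular solution we expand
\[
\bm u\cdot\bigl[\partial_t(\rho\bm u)+\operatorname{div}_x(\rho\bm u\otimes\bm u)+\partial_y(\rho v\bm u)\bigr].
\]
Using the product rule and the continuity equation $\partial_t\rho+\operatorname{div}_x(\rho\bm u)+\partial_y(\rho v)=0$, this equals
\[
\partial_t\bigl(\tfrac12\rho|\bm u|^2\bigr)+\operatorname{div}_x\bigl(\tfrac12\rho|\bm u|^2\bm u\bigr)+\partial_y\bigl(\tfrac12\rho v|\bm u|^2\bigr).
\]
The verification is standard: the terms $\tfrac12|\bm u|^2$ times the continuity equation cancel, and what remains is $\rho(\partial_t+\bm u\cdot\nabla_x+v\partial_y)\tfrac12|\bm u|^2$. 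We integrate over $\Omega$. The horizontal flux vanishes by periodicity and the vertical flux vanishes because $v|_{y=0,H}=0$. This leaves $\frac{d}{dt}\int_\Omega\tfrac12\rho|\bm u|^2$. The key point is that no equation for $v$ is needed; only the impermeability condition enters.

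\emph{Step 2: pressure and viscous terms.} The pressure contributes $\int_\Omega\bm u\cdot\nabla_xp(\rho)$, which we keep as is. Proposition \ref{prop:integrated-balance} is designed to cancel exactly this term later. For the horizontal viscous term we integrate by parts in $x$ using periodicity:
\[
-\int_\Omega 2\bm u\cdot\operatorname{div}_x(\nu_1 D_x\bm u)=2\int_\Omega\nu_1\,\nabla_x\bm u:D_x\bm u=2\int_\Omega\nu_1|D_x\bm u|^2,
\]
where the last equality holds because $D_x\bm u$ is symmetric, so its contraction with the antisymmetric part of $\nabla_x\bm u$ vanishes. For the vertical viscous term we integrate by parts in $y$:
\[
-\int_\Omega\bm u\cdot\partial_y(\nu_2\partial_y\bm u)=\int_\Omega\nu_2|\partial_y\bm u|^2-\int_{\mathbb T^2}\bigl[\nu_2\,\bm u\cdot\partial_y\bm u\bigr]_{y=0}^{y=H}.
\]
The boundary term vanishes by the Neumann condition $\partial_y\bm u=0$.

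Summing Steps 1 and 2 gives \eqref{eq:kinetic}. There is no real obstacle here. The only point needing care is the bookkeeping in Step 1: one must use the continuity equation with the vertical flux $\rho v$ exactly as in \eqref{eq:CPE}, so that the vertical term closes into the exact flux $\partial_y(\tfrac12\rho v|\bm u|^2)$. Since the regularity convention for regular solutions covers all the differentiations and traces used, nothing beyond this is required.
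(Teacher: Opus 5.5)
Your proposal is correct and follows the paper's proof essentially step for step. In both, the inertial terms collapse via the continuity equation to $\frac{d}{dt}\int_\Omega\tfrac12\rho|\bm u|^2\,dx\,dy$ plus fluxes that vanish by periodicity and by $v|_{y=0,H}=0$. The horizontal viscous term reduces to $2\int_\Omega\nu_1(\rho)|D_x\bm u|^2\,dx\,dy$ because $D_x\bm u$ is symmetric ($D_x\bm u:A_x\bm u=0$), and the vertical viscous term reduces to $\int_\Omega\nu_2(\rho)|\partial_y\bm u|^2\,dx\,dy$ after an integration by parts whose boundary term is killed by the Neumann condition $\partial_y\bm u|_{y=0,H}=0$.
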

\begin{proof}
Test the horizontal momentum equation against $\bm{u}$ and integrate. The
inertial terms combine, via the continuity equation, into a pure time
derivative; the divergence terms do not contribute by periodicity and
$v|_{y=0,H}=0$. Since $D_x\bm{u}:A_x\bm{u}=0$,
$2\int\operatorname{div}_x(\nu_1(\rho)D_x\bm{u})\cdot
\bm{u}\,dx\,dy=-2\int\nu_1(\rho)|D_x\bm{u}|^2\,dx\,dy$, with no term in $\nabla_x\nu_1(\rho)$
appearing, the integration by parts acting on the full flux
$\nu_1(\rho)D_x\bm{u}$. Vertically, $\partial_y\bm{u}|_{y=0,H}=0$ gives
$\int\partial_y(\nu_2(\rho)\partial_y\bm{u})\cdot
\bm{u}\,dx\,dy=-\int\nu_2(\rho)|\partial_y\bm{u}|^2\,dx\,dy$.
\end{proof}

\begin{definition}[Total mechanical energy]
\label{def:total-energy}
The total mechanical energy is the sum of exactly three energies --
kinetic, internal, and gravitational:
\begin{equation}
\label{eq:total-energy-def}
\mathcal E(t)=
\underbrace{\int_\Omega\tfrac12\rho|\bm{u}|^2\,dx\,dy}_{\text{kinetic}}
\;+\;
\underbrace{\int_\Omega e(\rho)\,dx\,dy}_{\text{internal}}
\;+\;
\underbrace{\int_\Omega \Phi(y)\rho\,dx\,dy}_{\text{gravitational}},
\end{equation}
\begin{equation}
\label{eq:dissipation-def}
\mathcal D(t)=2\int_\Omega\nu_1(\rho)|D_x\bm{u}|^2\,dx\,dy
+\int_\Omega\nu_2(\rho)|\partial_y\bm{u}|^2\,dx\,dy.
\end{equation}
\end{definition}

The kinetic identity of Lemma \ref{lem:kinetic} and the static balance
of Proposition \ref{prop:integrated-balance} share the same pressure
term $\int_\Omega \bm{u}\cdot\nabla_xp(\rho)\,dx\,dy$, with opposite sign; adding
them cancels it exactly, leaving a single dissipative identity for the
sum of all three energies -- the first sign that $\mathfrak M$, rather
than $\mathfrak h$ alone, is the correct multiplier for this system.

\begin{theorem}[Total mechanical energy identity]
\label{thm:total-energy}
For a regular solution of \eqref{eq:CPE} satisfying the boundary
conditions of Section \ref{sec:model},
\begin{equation}
\label{eq:total-energy-identity}
\frac{d}{dt}\mathcal E(t)+\mathcal D(t)=0,\qquad\text{i.e.}\qquad
\mathcal E(t)+\int_0^t\mathcal D(s)\,ds=\mathcal E(0),\quad t\ge0.
\end{equation}
\end{theorem}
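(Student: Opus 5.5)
The plan is simply to add the two identities already established. Lemma \ref{lem:kinetic} gives
\[
\frac{d}{dt}\int_\Omega\tfrac12\rho|\bm u|^2\,dx\,dy+\int_\Omega\bm u\cdot\nabla_xp(\rho)\,dx\,dy+\mathcal D(t)=0,
\]
with $\mathcal D$ exactly as in \eqref{eq:dissipation-def}. Proposition \ref{prop:integrated-balance} gives
\[
\frac{d}{dt}\int_\Omega\bigl(e(\rho)+\Phi(y)\rho\bigr)\,dx\,dy=\int_\Omega\bm u\cdot\nabla_xp(\rho)\,dx\,dy.
\]
Adding the two, the pressure work term $\int_\Omega\bm u\cdot\nabla_xp(\rho)$ cancels exactly. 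By Definition \ref{def:total-energy}, the remaining time derivatives assemble into $\frac{d}{dt}\mathcal E(t)$, so $\frac{d}{dt}\mathcal E+\mathcal D=0$. Integrating over $[0,t]$, which is legitimate because a regular solution makes $\mathcal E$ absolutely continuous in time, yields the integrated form in \eqref{eq:total-energy-identity}.

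There is no genuine obstacle, since all the structural work sits in the two ingredients. The one point worth checking explicitly is that no vertical-velocity contribution survives. In the kinetic identity, the transport terms $\partial_y(\rho v\bm u)$ and $\partial_y(\rho v)$ combine via the continuity equation into $\partial_y(\tfrac12\rho v|\bm u|^2)$, which vanishes by $v|_{y=0,H}=0$. In the static balance, the term $\int\mathfrak M\,\partial_y(\rho v)$ is an exact flux because $\partial_y\mathfrak M=0$ (Proposition \ref{lem:first-integral}). So the sign-indefinite term $\int\rho g v$ mentioned in the related-work discussion never has to be estimated on its own: it is absorbed into $\frac{d}{dt}\int\Phi\rho$.

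Finally, I would note that the identity does not depend on the reference density $\rho_\star$ in $e$. Changing $\rho_\star$ adds $c\rho$ to $e(\rho)$ for a constant $c$, which shifts $\mathcal E$ by $c\int_\Omega\rho$. This is constant in time, since integrating the continuity equation with periodicity and impermeability gives conservation of mass, so it disappears under $\frac{d}{dt}$ and cancels between the two sides of the integrated identity.
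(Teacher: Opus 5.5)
Your proof is correct and is exactly the paper's argument: adding Lemma \ref{lem:kinetic} to Proposition \ref{prop:integrated-balance} cancels the pressure work $\int_\Omega\bm u\cdot\nabla_xp(\rho)\,dx\,dy$, which leaves $\frac{d}{dt}\mathcal E+\mathcal D=0$, and you then integrate in time. Your side remarks also match the paper's own discussion: the vertical-velocity contribution is routed into an exact flux, and the dependence on $\rho_\star$ cancels by mass conservation.
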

\begin{proof}
By Lemma \ref{lem:kinetic} and Proposition \ref{prop:integrated-balance},
$\int_\Omega \bm{u}\cdot\nabla_xp(\rho)\,dx\,dy=\frac{d}{dt}\int_\Omega(e(\rho)+\Phi(y)
\rho)\,dx\,dy$; substituting into the kinetic balance gives
$\frac{d}{dt}\mathcal E(t)+\mathcal D(t)=0$, then integrate in time.
\end{proof}

Both closest precedents establish the basic energy estimate by direct
computation rather than through a first hydrostatic integral. Liu and
Titi \cite{LiuTiti4} test their momentum equation against the
horizontal velocity and manipulate powers $\rho^q$ directly, tailored
to their approximation scheme; since their model omits gravity
entirely, no gravitational potential term appears. Wang, Dou and Jiu
\cite{WangDouJiu}, after the change of variables of
\cite{ErsoyNgomSy} which renders their transformed density
independent of the vertical coordinate by construction, likewise obtain
an energy identity with no explicit gravitational term: gravity is
absorbed implicitly by the change of variables itself, which exists
only because the pressure law is linear. Neither precedent constructs
an object playing the role of $\mathfrak{M}$; the route taken here, through the
first hydrostatic integral, is valid for a general barotropic law and
keeps the gravitational potential explicit throughout
\eqref{eq:total-energy-def}.

\begin{proposition}[Two equivalent representations of the energy]
\label{prop:representations}
The total energy satisfies
\begin{equation}
\label{eq:pi-representation}
\mathcal E(t)=\int_\Omega\Bigl(\tfrac12\rho|\bm{u}|^2+\rho\mathfrak{M}(t,x)-p(\rho)\Bigr)dx\,dy,
\end{equation}
and, since
\begin{equation}
\label{eq:phi-rho-identity}
\int_\Omega \Phi(y)\rho\,dx\,dy=\int_\Omega p(\rho)\,dx\,dy-H\int_{\mathbb
T^2}p(\rho(t,x,H))\,dx
\end{equation}
(obtained by multiplying \eqref{eq:intro-hydrostatic} by $y$ and
integrating in $y$),
\begin{equation}
\label{eq:enthalpic-representation}
\mathcal E(t)=\int_\Omega\Bigl(\tfrac12\rho|\bm{u}|^2+\rho
\mathfrak{h}(\rho)\Bigr)dx\,dy-H\int_{\mathbb T^2}p(\rho(t,x,H))\,dx.
\end{equation}
\end{proposition}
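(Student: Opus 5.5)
The plan is to derive all three displays from pointwise algebraic identities already available, plus one vertical integration by parts; no dynamics enter, so the argument works at each fixed time $t$ for a regular solution with $\rho>0$.

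\emph{First representation.} Start from Definition \ref{def:total-energy} and use the duality \eqref{eq:pressure-enthalpy-duality}, $e(\rho)=\rho\mathfrak h(\rho)-p(\rho)$, pointwise. Then
\[
e(\rho)+\Phi(y)\rho=\rho\bigl(\mathfrak h(\rho)+\Phi(y)\bigr)-p(\rho).
\]
Proposition \ref{lem:first-integral} identifies $\mathfrak h(\rho)+\Phi(y)$ with the $y$-independent first integral $\mathfrak M(t,x)$. Adding the kinetic density gives \eqref{eq:pi-representation} directly.

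\emph{Identity \eqref{eq:phi-rho-identity}.} Multiply \eqref{eq:intro-hydrostatic} by $\Phi(y)/g=y$, so that $\Phi(y)\rho=-y\,\partial_yp(\rho)$. Integrate in $y$ over $(0,H)$ at fixed $(t,x)$ and integrate by parts:
\[
-\int_0^H y\,\partial_yp(\rho)\,dy=-\bigl[y\,p(\rho)\bigr]_{0}^{H}+\int_0^Hp(\rho)\,dy .
\]
The lower boundary term vanishes because of the factor $y$, and the upper one gives $-Hp(\rho(t,x,H))$. Integrating over $\mathbb T^2$ yields \eqref{eq:phi-rho-identity}. Regularity of $p(\rho)$ up to $y=H$ is covered by the standing convention on regular solutions, so the trace $p(\rho(t,x,H))$ makes sense.

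\emph{Enthalpic representation.} Substitute \eqref{eq:phi-rho-identity} into the gravitational term of \eqref{eq:total-energy-def}, and again use $e(\rho)=\rho\mathfrak h(\rho)-p(\rho)$. The two $\int_\Omega p(\rho)$ contributions, one with each sign, cancel, which leaves \eqref{eq:enthalpic-representation}.

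\emph{Main point of care.} The only step needing attention is the boundary term in the vertical integration by parts: it is the one place where a surface contribution survives. The computation must keep the value at $y=H$ and drop the one at $y=0$ only because of the factor $y$, with no boundary condition on $\rho$ or $p(\rho)$ being used. Everything else is pointwise algebra from Definition \ref{def:pressure-potential} and Proposition \ref{lem:first-integral}.
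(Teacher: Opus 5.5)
Your proposal is correct and follows the paper's own proof: the duality $e(\rho)=\rho\mathfrak h(\rho)-p(\rho)$ combined with the first integral gives the first representation, and the identity $\Phi(y)\rho=-y\,\partial_yp(\rho)$ together with $e(\rho)+p(\rho)=\rho\mathfrak h(\rho)$ gives the enthalpic one. The only difference is that you write out the vertical integration by parts, including why the term at $y=0$ drops out, which the paper states only parenthetically in the statement.
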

\begin{proof}
By \eqref{eq:pressure-enthalpy-duality}, $e(\rho)=\rho \mathfrak{h}(\rho)-p(\rho)$;
adding $\Phi(y)\rho$ and using \eqref{eq:first-integral} gives
$e(\rho)+\Phi(y)\rho=\rho\mathfrak{M}(t,x)-p(\rho)$, which is
\eqref{eq:pi-representation}. For \eqref{eq:enthalpic-representation},
combine \eqref{eq:phi-rho-identity} with
$e(\rho)+p(\rho)=\rho \mathfrak{h}(\rho)$.
\end{proof}
The condition $v|_{y=0,H}=0$ annihilates vertical fluxes in the energy
balance but imposes no condition on $p(\rho(t,x,H))$: the trace term in
\eqref{eq:enthalpic-representation} must therefore be retained in
general, unless $p(\rho(t,x,H))\equiv p_{\mathrm{top}}$ is constant, in
which case it reduces to the time-independent constant
$H|\mathbb T^2|p_{\mathrm{top}}$.

\begin{lemma}[Coercivity, and independence from $\rho_\star$]
\label{lem:relative-coercive}
Fix a background density $\bar\rho>0$ and set
\begin{equation}
\label{eq:relative-potential}
e_{\bar\rho}(\rho):=e(\rho)-e(\bar\rho)-\mathfrak{h}(\bar\rho)(\rho-\bar\rho),
\qquad\rho>0,
\end{equation}
the pressure potential of Definition \ref{def:pressure-potential},
harmonized relative to $\bar\rho$. Then $e_{\bar\rho}$ does not depend
on the reference density $\rho_\star$ used to define $e$. Under
Assumption \ref{ass:pressure}, $e_{\bar\rho}$ is strictly convex on
$(0,\infty)$, $e_{\bar\rho}(\bar\rho)=e_{\bar\rho}'(\bar\rho)=0$, and
\begin{equation}
\label{eq:relative-coercive}
e_{\bar\rho}(\rho)\ge0\ \text{ for every }\rho>0\text{, with equality
iff }\rho=\bar\rho.
\end{equation}
\end{lemma}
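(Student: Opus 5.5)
The plan is to verify each claim directly from Definition \ref{def:pressure-potential} and the identities \eqref{eq:e-identities}. The independence from $\rho_\star$ comes first, since it also fixes the sense in which the statement is well posed.

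Take two admissible reference densities $\rho_\star,\rho_\star'$. The corresponding potentials differ by
\[
e_{\rho_\star'}(\rho)-e_{\rho_\star}(\rho)=\rho\int_{\rho_\star'}^{\rho_\star}\frac{p(\varrho)}{\varrho^2}\,d\varrho=:k\rho,
\]
with $k$ a constant independent of $\rho$. Consequently $\mathfrak h=e'$ shifts by the same constant $k$. Substituting into \eqref{eq:relative-potential}, the change in $e_{\bar\rho}(\rho)$ is
\[
k\rho-k\bar\rho-k(\rho-\bar\rho)=0.
\]
So $e_{\bar\rho}$ is the same function for every choice of $\rho_\star$. This is exactly the additive shift of $\mathfrak h$, and hence of $\mathfrak M$, that was announced after Definition \ref{def:hydrostatic-potential}.

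For convexity, $e_{\bar\rho}$ differs from $e$ by an affine function of $\rho$. Hence $e_{\bar\rho}\in C^2((0,\infty))$ and
\[
e_{\bar\rho}''(\rho)=e''(\rho)=p'(\rho)/\rho>0
\]
by \eqref{eq:e-identities} and Assumption \ref{ass:pressure}, so $e_{\bar\rho}$ is strictly convex. Evaluating directly gives $e_{\bar\rho}(\bar\rho)=0$. Differentiating gives
\[
e_{\bar\rho}'(\rho)=\mathfrak h(\rho)-\mathfrak h(\bar\rho),
\]
which vanishes at $\bar\rho$.

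Nonnegativity and the equality case \eqref{eq:relative-coercive} then follow from a standard fact. A strictly convex $C^1$ function lies strictly above its tangent line at any point other than the point of tangency. Here the tangent at $\bar\rho$ is the zero function. Alternatively, $\mathfrak h$ is strictly increasing, so $e_{\bar\rho}'$ is negative on $(0,\bar\rho)$ and positive on $(\bar\rho,\infty)$. It follows that $\bar\rho$ is the unique strict global minimizer, with minimum value $0$.

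The only delicate point is the admissibility of $\rho_\star$ when comparing two references. If $\rho_\star=0$ is allowed, the integral $\int_0^{\rho}p/\varrho^2$ need not converge. I would handle this by noting that the difference formula only requires both references to be admissible, that is, both integrals convergent. In that case $\int_{\rho_\star'}^{\rho_\star}$ is finite by additivity of the integral, and the argument above goes through unchanged.
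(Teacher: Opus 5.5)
Your proposal is correct and follows the paper's proof essentially step for step: the change of reference shifts $e$ by $C\rho$ and $\mathfrak h$ by $C$, which cancels in \eqref{eq:relative-potential}, and then strict convexity from $e_{\bar\rho}''=p'(\rho)/\rho>0$, together with the vanishing tangent line at $\bar\rho$, gives \eqref{eq:relative-coercive}. Your additional remarks — on the admissibility of both reference densities, and the alternative argument from the strict monotonicity of $\mathfrak h$ — are harmless refinements of points the paper leaves implicit.
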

\begin{proof}
Replacing $\rho_\star$ by another admissible reference density
$\rho_\star'$ in Definition \ref{def:pressure-potential} gives
$\tilde e(\rho)-e(\rho)=\rho\int_{\rho_\star'}^{\rho_\star}p(\varrho)/\varrho^2\,d\varrho=:C\rho$,
a constant $C$ independent of $\rho$; so it suffices to treat
$\tilde e=e+C\rho$ for an arbitrary constant $C$. Then
$\widetilde{\mathfrak h}=\tilde e'=\mathfrak{h}+C$ and
\[
\tilde e(\rho)-\tilde e(\bar\rho)-\widetilde{\mathfrak h}(\bar\rho)(\rho-\bar\rho)
=e(\rho)-e(\bar\rho)-\mathfrak{h}(\bar\rho)(\rho-\bar\rho)+C\bigl[\rho-\bar\rho-(\rho-\bar\rho)\bigr]
=e_{\bar\rho}(\rho):
\]
the constant $C$ cancels identically, so $e_{\bar\rho}$ does not depend
on $\rho_\star$. By \eqref{eq:e-identities} and Assumption
\ref{ass:pressure}, $e_{\bar\rho}''=e''=p'(\rho)/\rho>0$, so
$e_{\bar\rho}$ is strictly convex; by construction
$e_{\bar\rho}(\bar\rho)=0$ and $e_{\bar\rho}'(\rho)=\mathfrak{h}(\rho)-\mathfrak{h}(\bar\rho)$
vanishes at $\rho=\bar\rho$. Strict convexity then gives
$e_{\bar\rho}(\rho)>e_{\bar\rho}(\bar\rho)+e_{\bar\rho}'(\bar\rho)(\rho-\bar\rho)=0$
for every $\rho\neq\bar\rho$.
\end{proof}

Potentials of the form \eqref{eq:relative-potential}, obtained by
subtracting from $e$ its tangent line at a background density, are the
standard device for producing a coercive, sign-definite internal
energy in the mathematical theory of compressible Navier--Stokes-type
equations, following Lions \cite{Lions}; the same device underlies the
relative-energy methods developed since. Replacing $e$ by
$e_{\bar\rho}$ in Definition \ref{def:total-energy} defines the
harmonized total energy
\begin{equation}
\label{eq:harmonized-energy}
\widehat{\mathcal E}(t)=
\underbrace{\int_\Omega\tfrac12\rho|\bm{u}|^2\,dx\,dy}_{\text{kinetic}}
\;+\;
\underbrace{\int_\Omega e_{\bar\rho}(\rho)\,dx\,dy}_{\text{internal}}
\;+\;
\underbrace{\int_\Omega \Phi(y)\rho\,dx\,dy}_{\text{gravitational}},
\end{equation}
the same sum of three energies as \eqref{eq:total-energy-def}, with a
coercive internal-energy term in place of $e$. Since $e_{\bar\rho}-e$
is an affine function of $\rho$ (\eqref{eq:relative-potential}) and the
total mass $M:=\int_\Omega\rho(t)\,dx\,dy$ is conserved by the
continuity equation and the boundary conditions of Section
\ref{sec:model},
\[
\widehat{\mathcal E}(t)-\mathcal E(t)
=-\mathfrak{h}(\bar\rho)M-\bigl(e(\bar\rho)-\mathfrak{h}(\bar\rho)\bar\rho\bigr)|\Omega|
\]
is independent of $t$; consequently $\widehat{\mathcal E}$ satisfies
the identical identity \eqref{eq:total-energy-identity} of Theorem
\ref{thm:total-energy}, and every estimate of this note stated as a
bound on $\mathcal E$ or on $d\mathcal E/dt$ transfers unchanged to
$\widehat{\mathcal E}$. Unlike $e$, however, $e_{\bar\rho}$ is
nonnegative and vanishes only at the background state $\rho=\bar\rho$
(Lemma \ref{lem:relative-coercive}): it is $\widehat{\mathcal E}$,
rather than $\mathcal E$, that carries the coercive control of
$\rho\log\rho$-type quantities used in the weak-solution existence
literature. Taking $\bar\rho=1$ recovers, in the isothermal case of
\S\ref{subsec:isothermal}, the normalization
\eqref{eq:isothermal-potential} below; in the polytropic case of
\S\ref{subsec:polytropic}, it recovers the classical form
$e_1(\rho)=\frac a{\gamma-1}(\rho^\gamma-\gamma\rho+\gamma-1)$. There,
however, $\rho_\star=0$ already gives
$e(\rho)=a\rho^\gamma/(\gamma-1)\ge0$, vanishing at the vacuum
$\rho=0$: harmonization is therefore only genuinely needed in the
isothermal case, where no density plays the distinguished role that
$\rho=0$ plays for $\gamma>1$.

We now record the class of weak solutions on which the energetic
structure above is intended to act. The distributional formulation
(iii)--(iv) below follows the standard convention for the barotropic
compressible Navier--Stokes equations, due to Lions \cite{Lions} and
extended by Feireisl, Novotn\'y and Petzeltov\'a \cite{Feireisl}; for
the CPE specifically, it is modeled on the two existing precedents,
the stability class of Ersoy, Ngom and Sy \cite{ErsoyNgomSy} and the
existence class of Wang, Dou and Jiu \cite{WangDouJiu}. The
regularity class (ii) pairs $\nu_i(\rho)\in L^1((0,T)\times\Omega)$
with $\sqrt{\nu_1(\rho)}\,D_x\bm{u}$, $\sqrt{\nu_2(\rho)}\,\partial_y\bm{u}\in
L^2((0,T)\times\Omega)$, rather than a bound on $\nabla_x\bm{u}$ itself: this
is the admissibility class standard in the degenerate-viscosity theory
of Bresch and Desjardins \cite{BD1,BD2}, Mellet and Vasseur
\cite{MelletVasseur}, and Vasseur and Yu \cite{VasseurYu,VasseurYu2},
also adopted by Liu and Titi \cite{LiuTiti4} for the polytropic CPE
without gravity. It is dictated by $\nu_1,\nu_2$ possibly vanishing at
vacuum (Assumption \ref{ass:viscosities}), under which a bound on
$\nabla_x\bm{u}$ itself is not available.

\begin{definition}[Weak solution of the CPE]
\label{def:weak-solution}
Let $T>0$ and let $\rho_0\ge0$, $m_0$ be initial data with
$\rho_0\in L^1(\Omega)$, $e(\rho_0)\in L^1(\Omega)$, and
$|m_0|^2/\rho_0\in L^1(\Omega)$ (with the convention $:=0$ on
$\{\rho_0=0\}$, where also $m_0=0$). We say that $(\rho,\bm{u},v)$ is a weak
solution of \eqref{eq:CPE} on $[0,T]$ if:
\begin{enumerate}[label=(\roman*)]
\item $\rho\ge0$ a.e.\ on $(0,T)\times\Omega$, with $\rho\in
L^1_{loc}(\Omega)$ and $p(\rho)\in L^1_{loc}(\Omega)$ for a.e.\ $t$, and
the hydrostatic relation holds in the sense of distributions: for a.e.\
$t\in(0,T)$,
\begin{equation}
\label{eq:weak-hydrostatic-def}
\int_\Omega p(\rho(t))\,\partial_y\chi\,dx\,dy=g\int_\Omega\rho(t)\,\chi\,dx\,dy
\qquad\text{for every }\chi\in C_c^\infty(\Omega),
\end{equation}
i.e.\ \eqref{eq:HB-weak} of Remark \ref{rem:hydrostatic-weak}. This is
how the third equation of \eqref{eq:CPE} -- the algebraic hydrostatic
relation \eqref{eq:intro-hydrostatic} -- is encoded at the level of a
weak solution: not as a further equation tested against $\varphi$ or
$\psi$, but as a structural constraint on $\rho$ built directly into
this definition. \eqref{eq:weak-hydrostatic-def} is taken as the
defining constraint, rather than a pointwise formula for $\rho$, for
exactly the reason isolated in Remark \ref{rem:hydrostatic-weak}: it
makes sense under no regularity of $\rho$ beyond $\rho,p(\rho)\in
L^1_{loc}(\Omega)$, and both of its sides vanish identically wherever
$\rho\equiv0$, since $p(0)=0$ -- so it is compatible with vacuum,
including an entire vacuum column ($\rho(t,x,\cdot)\equiv0$ on $(0,H)$
for a.e.\ $x$), for \emph{every} pressure law under Assumption
\ref{ass:pressure}, with no law-specific convention required. On the
non-degenerate set where $\rho(t,\cdot,\cdot)$ is, for a.e.\ $x$, bounded
away from $0$ and $+\infty$ on $(0,H)$, Lemma \ref{lem:HB-weak-pointwise}
shows that \eqref{eq:weak-hydrostatic-def} is equivalent to the
pointwise formula $\rho(t,x,y)=\mathfrak h^{-1}(\mathfrak
M(t,x)-\Phi(y))$ for some $\mathfrak M(t,\cdot)$ independent of $y$; this
is the regime in which Propositions \ref{prop:polytropic-profile} and
\ref{prop:isothermal-profile} apply, respectively extending the formula
continuously through vacuum at the polytropic law via
$\rho=[\cdots]_+^{1/(\gamma-1)}$, and excluding vacuum a posteriori for
regular, strictly positive solutions at the isothermal law. At the
isothermal law specifically, $\mathfrak h$ is a bijection of
$(0,\infty)$ onto all of $\mathbb R$, so no finite $\mathfrak
M(t,x)-\Phi(y)$ can represent a vacuum point through this pointwise
formula: a vacuum column is admissible at the isothermal law only
through \eqref{eq:weak-hydrostatic-def} directly, on the degenerate set
where the pointwise formula does not apply;
\item the solution has finite regularity class:
\[
\rho\in L^\infty(0,T;L^1(\Omega)),\quad
e(\rho)\in L^\infty(0,T;L^1(\Omega)),\quad
p(\rho)\in L^1((0,T)\times\Omega),
\]
\[
\sqrt\rho\,\bm{u}\in L^\infty(0,T;L^2(\Omega)),\quad
\sqrt\rho\,v\in L^2((0,T)\times\Omega),\quad
\nu_1(\rho),\nu_2(\rho)\in L^1((0,T)\times\Omega),
\]
\[
\sqrt{\nu_1(\rho)}\,D_x\bm{u},\ \sqrt{\nu_2(\rho)}\,\partial_y\bm{u}\in L^2((0,T)\times\Omega);
\]
\item the continuity equation holds in the sense of distributions: for
every $\varphi\in C_c^\infty([0,T)\times\overline\Omega)$ -- test
functions compactly supported in $t$ but, through the closure
$\overline\Omega=\mathbb T^2\times[0,H]$, allowed to be nonzero up to and
including $y=0,H$, precisely so that no boundary term is discarded by
the choice of test space itself -- periodic in
$x$,
\begin{equation}
\label{eq:weak-continuity}
\int_0^T\!\!\int_\Omega\rho\bigl(\partial_t\varphi+\bm{u}\cdot\nabla_x\varphi+v\partial_y\varphi\bigr)\,dx\,dy\,dt
+\int_\Omega\rho_0\,\varphi(0,\cdot)\,dx\,dy=0;
\end{equation}
\item the momentum equation holds in the sense of distributions: for
every $\psi\in C_c^\infty([0,T)\times\overline\Omega;\mathbb R^2)$,
periodic in $x$,
\begin{multline}
\label{eq:weak-momentum}
\int_0^T\!\!\int_\Omega\Bigl[\rho \bm{u}\cdot\partial_t\psi+\rho \bm{u}\otimes
\bm{u}:\nabla_x\psi+\rho v\bm{u}\cdot\partial_y\psi+p(\rho)\operatorname{div}_x\psi\Bigr]\,dx\,dy\,dt\\
-2\int_0^T\!\!\int_\Omega\nu_1(\rho)D_x\bm{u}:D_x\psi\,dx\,dy\,dt
-\int_0^T\!\!\int_\Omega\nu_2(\rho)\partial_y\bm{u}\cdot\partial_y\psi\,dx\,dy\,dt
+\int_\Omega m_0\cdot\psi(0,\cdot)\,dx\,dy=0.
\end{multline}
\end{enumerate}
\end{definition}

No term at $y=0,H$ appears in, or is required for, either identity, and
no trace regularity of $\bm{u}$ or $v$ at $y=0,H$ is presupposed: this is
precisely how the two boundary conditions of Section \ref{sec:model}
are encoded weakly. In \eqref{eq:weak-continuity}, the absence of a
boundary term is the weak form of impermeability $v=0$ on
$\{y=0,H\}$ -- of Dirichlet type for this first-order equation, encoded
as a vanishing flux across a characteristic boundary -- obtained by
testing against $\varphi$ arbitrary (not required to vanish at the
boundary).
In \eqref{eq:weak-momentum}, the absence of a boundary term is
likewise the weak form of the Neumann condition $\partial_y\bm{u}=0$, exactly
as for any natural condition in a variational formulation.

\begin{definition}[Finite-energy weak solution]
\label{def:finite-energy}
A weak solution $(\rho,\bm{u},v)$ in the sense of Definition
\ref{def:weak-solution} is of finite energy if $\mathcal E(0)<\infty$
and, for almost every $t\ge0$,
\begin{equation}
\label{eq:weak-energy-ineq}
\mathcal E(t)+\int_0^t\mathcal D(s)\,ds\le\mathcal E(0).
\end{equation}
\end{definition}

Since the total mass $M=\int_\Omega\rho\,dx\,dy$ is conserved (immediate
by integrating the continuity equation, using periodicity and
$v|_{y=0,H}=0$), the identity preceding \eqref{eq:harmonized-energy}
shows that $\widehat{\mathcal E}(t)-\mathcal E(t)$ is a time-independent
affine function of $M$; consequently the finite-energy inequality
\eqref{eq:weak-energy-ineq}, stated for $\mathcal E$, is equivalent to
the same inequality stated for $\widehat{\mathcal E}$. Because
$e_{\bar\rho}\ge0$, and vanishes only at $\bar\rho$ (Lemma
\ref{lem:relative-coercive}), unlike $e$ itself, it is this harmonized
form of the inequality that yields the coercive bound
$\sup_t\int_\Omega(\tfrac12\rho|\bm{u}|^2+e_{\bar\rho}(\rho))\,dx\,dy$ and
the integrated viscous dissipation, both controlled by
$\widehat{\mathcal E}(0)$ alone; the gravitational term, in either form,
is in addition controlled uniformly by
$0\le\int_\Omega\Phi(y)\rho\,dx\,dy\le gHM$.

Regarding the scope of Definition \ref{def:weak-solution}: point (i) is
the only place where the hydrostatic relation enters; it
is stated directly in distributional form,
\eqref{eq:weak-hydrostatic-def}, precisely so as not to presuppose
differentiability or strict positivity of the vertical density profile.
Away from vacuum, Lemma \ref{lem:HB-weak-pointwise} recovers the
Montgomery first-integral representation of Proposition
\ref{lem:first-integral} under the non-degeneracy assumptions stated
there. The regularity class (ii)
is an admissible class under which every term of
\eqref{eq:weak-continuity}--\eqref{eq:weak-momentum} is well defined;
no minimality is claimed or needed here, and finer
spaces (for instance $\rho\in L^\infty(0,T;L^\gamma(\Omega))$ for the
polytropic law) are not fixed at this level of generality, consistently
with Assumptions \ref{ass:pressure}--\ref{ass:viscosities}. This note
does not develop an existence theory; Definition
\ref{def:weak-solution} and Definition \ref{def:finite-energy} are
recorded only because Remark \ref{prop:mechanism-b-obstruction}
and the isothermal application of Section \ref{sec:polytropic} are
stated at this level of regularity.

\section{Structural classification}
\label{sec:Lambda-N}

We now differentiate the same constancy a second time. Testing the
momentum equation against a Bresch--Desjardins-type augmented velocity
produces cross terms governed by two structural functions of the
pressure law alone; classifying them is the goal of this section.

The total energy identity of Theorem \ref{thm:total-energy} controls
$\rho$ only through $e(\rho)$, and yields no information on
$\nabla_x\rho$; for a viscosity that degenerates at vacuum, this is
insufficient for compactness, and one is led, as in the classical
Bresch--Desjardins theory \cite{BD1,BD2} and its degenerate-viscosity
extensions \cite{MelletVasseur,VasseurYu,VasseurYu2,BVY}, to test the
momentum equation against an augmented velocity. Following Bresch,
Vasseur and Yu \cite[eq.\ (1.6)]{BVY}, we couple this drift to the
viscosity itself rather than to $\log\rho$ directly: for a reference
density $\rho_\star>0$, set
\begin{equation}
\label{eq:s-def}
s(\rho):=\int_{\rho_\star}^\rho\frac{\nu_1'(\varrho)}\varrho\,d\varrho,
\end{equation}
and test the momentum equation against
\begin{equation}
\label{eq:augmented-velocity-def}
U:=\bm{u}+2\kappa\nabla_xs(\rho)
\end{equation}
for a dimensionless constant $\kappa\ge0$: $\kappa$ and $\nu_1$ thus enter
through two different objects, $s(\rho)$ carrying all the dependence on
the viscosity profile. As Assumption \ref{ass:linear-viscosity} below
records, under the linear-viscosity hypothesis $\nu_1(\rho)=\bar\nu_1\rho$
used there, $s(\rho)$ reduces to a multiple of $\log\rho$, which is the
only case this note develops in full;
Remark \ref{rem:wdj-mechanism} further compares the resulting $\kappa$
with the relaxation parameter of the same name used by
\cite{BVY}. Constructing the corresponding
augmented equation requires differentiating the hydrostatic relation
\eqref{eq:first-integral} a second time, both horizontally and
vertically -- a step forced by the augmented-velocity construction
itself, not a choice made for convenience. This differentiation cannot
avoid producing two algebraic functions of $\rho$, encoding respectively
its vertical and its mixed vertical-horizontal second-order behavior; we
isolate them here, for a general barotropic law, independently of any
further estimate, because they determine exactly which pressure-dependent
hydrostatic cross terms simplify or vanish in the resulting augmented
identity.
Since $\Lambda,\mathcal{N}$ below involve $\mathfrak{h}''$, and hence $p''$, through
$\mathfrak{h}'(\rho)=p'(\rho)/\rho$, this section implicitly strengthens the
standing regularity of Assumption \ref{ass:pressure} from $p\in
C^1((0,\infty))$ to $p\in C^2((0,\infty))$; this is used only in the
remainder of \S\ref{sec:Lambda-N} and in \S\ref{sec:polytropic}; the definition
and use of $\Lambda,\mathcal{N}$ throughout are understood under this
strengthened hypothesis.

\begin{definition}[Structural functions]
\label{def:Lambda-N}
Assume in addition $p\in C^2((0,\infty))$. Set
\begin{equation}
\label{eq:Lambda-N-def}
\Lambda(\rho):=\frac{g\,\mathfrak{h}''(\rho)}{\mathfrak{h}'(\rho)^2},
\qquad
\mathcal{N}(\rho):=\frac{\Lambda(\rho)}\rho+\frac g{\mathfrak{h}'(\rho)\rho^2}.
\end{equation}
\end{definition}

Both functions are defined here purely algebraically, from $\mathfrak{h}',\mathfrak{h}''$
alone; Proposition \ref{lem:Lambda-N-geometric} below shows that each is in
fact already present, unnamed, in the vertical density profile itself,
and that each encodes a distinct hydrostatic obstruction:
$\Lambda=\partial_y\log|\partial_y\rho|$, the logarithmic vertical
variation of $\partial_y\rho$, is the first -- it measures how far the
vertical profile is from affine, the degeneracy needed for the
horizontal--vertical commutator of Lemma \ref{lem:N} to vanish at the
level of $\nabla_x\rho$ itself; $\mathcal{N}=\partial_y^2\log\rho/\partial_y\rho$,
the vertical curvature of $\log\rho$ rescaled by $\partial_y\rho$, is the
second, and finer -- it measures the same degeneracy one logarithmic
derivative down, vanishing exactly when the profile is a pure
exponential in $y$. This geometric reading, rather than the algebraic definition above, is
what drives the classification of Theorem
\ref{thm:Lambda-N-classification}: it identifies exactly which
barotropic pressure laws eliminate the two hydrostatic coefficients
generated by the augmented identity of Theorem \ref{thm:kappa-entropy}
below, and, in doing so, turns the isothermal law from one example
among others into
the only pressure law admitting the multiplicative Ersoy--Ngom--Sy
factorization considered here (Remark
\ref{rem:classification-geometric}), and hence the only law reached by
the application of \S\ref{subsec:wdj-transfer};
the resulting simplification is partial, not a closed entropy, as
Remark \ref{prop:mechanism-b-obstruction} below makes precise.

\begin{lemma}[Vertical--horizontal commutation identities]
\label{lem:N}
For a regular solution with $\rho>0$ satisfying
\eqref{eq:intro-hydrostatic},
\begin{equation}
\label{eq:Lambda-identity}
\partial_y\nabla_x\rho=\Lambda(\rho)\,\nabla_x\rho,
\qquad
\partial_y\bigl(\nabla_x\log\rho\bigr)=\mathcal{N}(\rho)\,\nabla_x\rho.
\end{equation}
\end{lemma}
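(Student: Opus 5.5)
The plan is to start from the first hydrostatic integral of Proposition \ref{lem:first-integral}, which is the only input needed. Vertical constancy gives the pointwise relation $\mathfrak h'(\rho)\partial_y\rho=-g$. The horizontal gradient of \eqref{eq:first-integral} gives $\mathfrak h'(\rho)\nabla_x\rho=\nabla_x\mathfrak M(t,x)$, which is independent of $y$. Both hold for a regular solution with $\rho>0$, and $\mathfrak h'=p'/\rho>0$, so we may divide by $\mathfrak h'(\rho)$ freely. Using $p\in C^2$, the function $\mathfrak h'$ is $C^1$, so all compositions below can be differentiated classically.

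\emph{First identity.} We apply $\partial_y$ to $\mathfrak h'(\rho)\nabla_x\rho=\nabla_x\mathfrak M$. The right side has zero $y$-derivative, so
\[
\mathfrak h''(\rho)\,\partial_y\rho\,\nabla_x\rho+\mathfrak h'(\rho)\,\partial_y\nabla_x\rho=0 .
\]
We substitute $\partial_y\rho=-g/\mathfrak h'(\rho)$ and solve for $\partial_y\nabla_x\rho$:
\[
\partial_y\nabla_x\rho=\frac{g\,\mathfrak h''(\rho)}{\mathfrak h'(\rho)^2}\,\nabla_x\rho=\Lambda(\rho)\nabla_x\rho,
\]
which matches Definition \ref{def:Lambda-N}. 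The commutation $\partial_y\nabla_x\rho=\nabla_x\partial_y\rho$ is covered by the regularity convention. Equivalently, one could differentiate $\partial_y\rho=-g/\mathfrak h'(\rho)$ horizontally, and this serves as a consistency check.

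\emph{Second identity.} We write $\nabla_x\log\rho=\nabla_x\rho/\rho$ and use the quotient rule:
\[
\partial_y(\nabla_x\log\rho)=\frac{\partial_y\nabla_x\rho}{\rho}-\frac{\partial_y\rho\,\nabla_x\rho}{\rho^2}.
\]
Inserting the first identity and again $\partial_y\rho=-g/\mathfrak h'(\rho)$ gives
\[
\partial_y(\nabla_x\log\rho)=\Bigl(\frac{\Lambda(\rho)}{\rho}+\frac{g}{\mathfrak h'(\rho)\rho^2}\Bigr)\nabla_x\rho=\mathcal N(\rho)\nabla_x\rho .
\]

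There is no real obstacle. The only points needing care are sign bookkeeping in the substitution $\partial_y\rho=-g/\mathfrak h'(\rho)$ and justifying the $C^1$ regularity of $\mathfrak h'(\rho(t,x,y))$ from $p\in C^2((0,\infty))$ together with $\rho>0$. Both are covered by the standing hypotheses of this section and the definition of a regular solution.
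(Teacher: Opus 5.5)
Your proposal is correct and follows the paper's own proof essentially step for step. You differentiate $\mathfrak h'(\rho)\nabla_x\rho=\nabla_x\mathfrak M$ in $y$, substitute $\partial_y\rho=-g/\mathfrak h'(\rho)$ to get the $\Lambda$-identity, and then apply the quotient rule to $\nabla_x\rho/\rho$ to get the $\mathcal N$-identity. One minor point: your remark about commuting $\partial_y$ and $\nabla_x$ is not actually used, since differentiating $\mathfrak h'(\rho)\nabla_x\rho$ in $y$ produces $\partial_y\nabla_x\rho$ directly.
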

\begin{proof}
By Proposition \ref{lem:horizontal-gradient}, $\nabla_x\mathfrak{M}=\mathfrak{h}'(\rho)\nabla_x\rho$,
and $\nabla_x\mathfrak{M}$ is independent of $y$ since $\mathfrak{M}$ is (Proposition
\ref{lem:first-integral}); hence, using $\partial_y\rho=-g/\mathfrak{h}'(\rho)$
from the proof of Proposition \ref{lem:first-integral},
\[
0=\partial_y\bigl(\mathfrak{h}'(\rho)\nabla_x\rho\bigr)
=\mathfrak{h}''(\rho)\,\partial_y\rho\,\nabla_x\rho+\mathfrak{h}'(\rho)\,\partial_y\nabla_x\rho,
\]
and dividing by $\mathfrak{h}'(\rho)$ gives the first identity. For the second,
write
\[
\partial_y\Bigl(\frac{\nabla_x\rho}\rho\Bigr)
=\frac{\partial_y\nabla_x\rho}\rho-\frac{(\nabla_x\rho)(\partial_y\rho)}{\rho^2};
\]
substituting the first identity just proved and $\partial_y\rho=-g/\mathfrak{h}'(\rho)$
gives
\[
\partial_y\Bigl(\frac{\nabla_x\rho}\rho\Bigr)
=\Bigl[\frac{\Lambda(\rho)}\rho+\frac g{\mathfrak{h}'(\rho)\rho^2}\Bigr]\nabla_x\rho
=\mathcal{N}(\rho)\nabla_x\rho.
\]
\end{proof}

Both identities are purely algebraic consequences of the hydrostatic
relation \eqref{eq:first-integral} and hold independently of any
regularity of $\bm{u},v$ or of the momentum equation itself. Once $\nu_1$ is
linear, so that $s(\rho)$ is a multiple of $\log\rho$ and the augmented
velocity \eqref{eq:augmented-velocity-def} takes the form
$U=\bm{u}+2\kappa\bar\nu_1\nabla_x\log\rho$ (Assumption \ref{ass:linear-viscosity}
below), $\Lambda$ and $\mathcal{N}$ govern, respectively, the two cross terms
coupling the vertical velocity to the horizontal
density gradient that arise from differentiating the continuity
equation: $\Lambda$ enters through the second-order vertical variation
of $\nabla_x\rho$ used in constructing the augmented momentum equation
itself, and $\mathcal{N}$ enters directly through the commutator between
$\partial_y$ and $\nabla_x\log\rho$ that appears once that equation is
tested. Whether, and under what condition on $\nu_1,\nu_2$, these two
mechanisms can be absorbed into a closed Bresch--Desjardins-type entropy
estimate is a question specific to the choice of pressure law and
viscosity profile; we now carry this out far enough to identify
precisely where it succeeds and where it does not.

This is only one of two distinct consequences of the same hydrostatic
reduction \eqref{eq:intro-hydrostatic}. $\Lambda$ and $\mathcal{N}$ obstruct the
augmented-velocity construction \emph{kinematically}, through the
vertical density profile alone, independently of the momentum equation
or of $v$. A second, \emph{dynamical} obstruction, examined starting
from the reconstruction formula \eqref{eq:v-reconstruction} below,
arises because, in the hydrostatic regime, the vertical momentum balance
is itself replaced by the algebraic relation \eqref{eq:intro-hydrostatic}:
the vertical velocity is left with no viscous equation of its
own to test against the $\nabla_xv$ term of mechanism (b) (Remark
\ref{prop:mechanism-b-obstruction}, Remark \ref{rem:gravity-role}). Both
obstructions trace to \eqref{eq:intro-hydrostatic}, but neither reduces
to the other within the framework considered here: $\Lambda,\mathcal{N}$
encode the pressure-dependent kinematic cross terms generated by the
hydrostatic density profile, while mechanism (b) reflects the absence
of a vertical momentum equation for $v$ in this hydrostatic
formulation, and persists even at the isothermal law, where
$\mathcal{N}\equiv0$ already removes mechanism (a) entirely.

Before doing so, we record two further facts about $\Lambda$ and $\mathcal{N}$
that hold independently of the augmented-velocity construction and
justify treating them as the central objects of this section: an
intrinsic analytic meaning, in terms of the vertical density profile
alone, and a complete classification of the barotropic laws that
annihilate each of them.

\begin{proposition}[Analytic meaning of $\Lambda$ and $\mathcal{N}$]
\label{lem:Lambda-N-geometric}
For a regular solution with $\rho>0$ satisfying
\eqref{eq:intro-hydrostatic}, $\partial_y\rho<0$ everywhere, and
\begin{equation}
\label{eq:Lambda-N-geometric}
\Lambda(\rho)=\partial_y\log|\partial_y\rho|,
\qquad
\mathcal{N}(\rho)=\frac{\partial_y^2\log\rho}{\partial_y\rho}.
\end{equation}
Consequently $\Lambda\equiv0$ on the range of a solution if and only if
$y\mapsto\rho(t,x,y)$ is affine there, and $\mathcal{N}\equiv0$ if and only if
$y\mapsto\log\rho(t,x,y)$ is affine there, i.e.\ the vertical density
profile is a pure exponential in $y$.
\end{proposition}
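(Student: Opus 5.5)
The argument rests on the vertical relation $\partial_y\rho=-g/\mathfrak h'(\rho)$, already extracted in the proof of Proposition \ref{lem:first-integral}. Since $\rho>0$ and $\mathfrak h'(\rho)=p'(\rho)/\rho>0$ by Assumption \ref{ass:pressure}, this relation gives $\partial_y\rho<0$ everywhere, which is the first claim. In particular $|\partial_y\rho|=g/\mathfrak h'(\rho)$, so $\log|\partial_y\rho|=\log g-\log\mathfrak h'(\rho)$ is a $C^1$ function of $\rho$; this uses the strengthened hypothesis $p\in C^2$ of this section.

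For the identity for $\Lambda$, we differentiate in $y$ by the chain rule:
\[
\partial_y\log|\partial_y\rho|=-\frac{\mathfrak h''(\rho)}{\mathfrak h'(\rho)}\,\partial_y\rho
=\frac{g\,\mathfrak h''(\rho)}{\mathfrak h'(\rho)^2}=\Lambda(\rho).
\]
For $\mathcal N$, we write $\partial_y\log\rho=\partial_y\rho/\rho=-g/(\rho\,\mathfrak h'(\rho))$ and differentiate again:
\[
\partial_y^2\log\rho=g\,\frac{\mathfrak h'(\rho)+\rho\,\mathfrak h''(\rho)}{\rho^2\mathfrak h'(\rho)^2}\,\partial_y\rho .
\]
Dividing by $\partial_y\rho$, which is nonzero, and splitting the fraction gives $g/(\mathfrak h'\rho^2)+\Lambda/\rho=\mathcal N(\rho)$. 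An equivalent route avoids the second-order computation in $\rho$: $\partial_y\log\rho=\partial_y\rho/\rho$, so $\partial_y^2\log\rho=\partial_y^2\rho/\rho-(\partial_y\rho)^2/\rho^2$. Using $\partial_y^2\rho=\Lambda\,\partial_y\rho$ (this is $\partial_y\log|\partial_y\rho|=\Lambda$ rewritten) and $\partial_y\rho=-g/\mathfrak h'$, we get $\partial_y^2\log\rho=\partial_y\rho\,\bigl(\Lambda/\rho+g/(\mathfrak h'\rho^2)\bigr)$.

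The consequences are read off fiber by fiber at fixed $(t,x)$. By the identity just proved, $\Lambda\equiv0$ along the fiber if and only if $\partial_y\log|\partial_y\rho|=0$, that is, $|\partial_y\rho|$ is constant. Because $\partial_y\rho$ has constant sign, this is the same as $\partial_y\rho$ being constant, i.e.\ $\rho$ affine in $y$. Equivalently, $\partial_y^2\rho=\Lambda\,\partial_y\rho$ vanishes exactly when $\Lambda$ does. Likewise $\mathcal N\equiv0$ if and only if $\partial_y^2\log\rho=0$, again because $\partial_y\rho\neq0$, which holds exactly when $\log\rho$ is affine in $y$, i.e.\ $\rho=A(t,x)e^{-\beta(t,x)y}$ with $\beta>0$ since $\partial_y\rho<0$.

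The computations are routine. The only point needing care is the phrase ``on the range of a solution''. As functions of $\rho$, $\Lambda$ and $\mathcal N$ are evaluated only at values attained by $\rho$, so the equivalence must be read as vanishing of $\Lambda(\rho(t,x,y))$ along the fiber, not identically on $(0,\infty)$. Upgrading this to a statement about the pressure law is left to Theorem \ref{thm:Lambda-N-classification}.
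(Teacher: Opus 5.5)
Your proposal is correct and follows essentially the same route as the paper. Both arguments use $\partial_y\rho=-g/\mathfrak h'(\rho)$, take a chain-rule derivative of $\log|\partial_y\rho|=\log g-\log\mathfrak h'(\rho)$ for $\Lambda$, differentiate $\partial_y\log\rho=-g/(\rho\,\mathfrak h'(\rho))$ once more for $\mathcal N$, and read off both equivalences fiberwise from the constant sign of $\partial_y\rho$. Your alternative derivation of $\mathcal N$ through $\partial_y^2\rho=\Lambda\,\partial_y\rho$ is the vertical analogue of the commutator computation in Lemma~\ref{lem:N}, and it is equally valid.
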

\begin{proof}
By the proof of Proposition \ref{lem:first-integral}, $\partial_y\rho=-g/\mathfrak{h}'(\rho)$,
strictly negative since $\mathfrak{h}'(\rho)=p'(\rho)/\rho>0$ by Assumption
\ref{ass:pressure}; hence $\log|\partial_y\rho|=\log g-\log \mathfrak{h}'(\rho)$,
and
\[
\partial_y\log|\partial_y\rho|
=-\frac{\mathfrak{h}''(\rho)}{\mathfrak{h}'(\rho)}\,\partial_y\rho
=-\frac{\mathfrak{h}''(\rho)}{\mathfrak{h}'(\rho)}\cdot\Bigl(-\frac g{\mathfrak{h}'(\rho)}\Bigr)
=\frac{g\mathfrak{h}''(\rho)}{\mathfrak{h}'(\rho)^2}=\Lambda(\rho).
\]
For $\mathcal{N}$, set
$\phi(\rho):=\partial_y\log\rho=\partial_y\rho/\rho=-g/(\mathfrak{h}'(\rho)\rho)$;
by the chain rule $\partial_y^2\log\rho=\phi'(\rho)\,\partial_y\rho$, and
\[
\phi'(\rho)=-g\,\frac{d}{d\rho}\Bigl[\frac1{\mathfrak{h}'(\rho)\rho}\Bigr]
=g\,\frac{\mathfrak{h}''(\rho)\rho+\mathfrak{h}'(\rho)}{(\mathfrak{h}'(\rho)\rho)^2}
=\frac{g\mathfrak{h}''(\rho)}{\mathfrak{h}'(\rho)^2\rho}+\frac g{\mathfrak{h}'(\rho)\rho^2}
=\frac{\Lambda(\rho)}\rho+\frac g{\mathfrak{h}'(\rho)\rho^2}=\mathcal{N}(\rho),
\]
so $\partial_y^2\log\rho=\mathcal{N}(\rho)\,\partial_y\rho$, and dividing by
$\partial_y\rho\neq0$ gives the stated identity. The two equivalences
follow at once: $\Lambda\equiv0$ means $\partial_y\rho$ has constant
sign and constant logarithm of its magnitude, i.e.\ is itself constant,
so $\rho$ is affine in $y$; $\mathcal{N}\equiv0$ means $\partial_y\log\rho$ is
constant, so $\log\rho$ is affine in $y$.
\end{proof}

\begin{theorem}[Classification of the structural functions]
\label{thm:Lambda-N-classification}
Under Assumption \ref{ass:pressure}, with the additional regularity
$p\in C^2((0,\infty))$ required in Definition \ref{def:Lambda-N} to
define $\Lambda,\mathcal{N}$:
\begin{enumerate}[label=(\roman*)]
\item $\Lambda\equiv0$ if and only if $p(\rho)=a\rho^2$ for some
constant $a>0$;
\item $\mathcal{N}\equiv0$ if and only if $p(\rho)=c^2\rho$ for some constant
$c^2=R\mathcal{T}>0$ (the isothermal law; $R>0$ the specific gas
constant and $\mathcal{T}>0$ the fixed absolute temperature, see
\S\ref{subsec:isothermal}).
\end{enumerate}
In particular, under these assumptions, no barotropic
pressure law annihilates both $\Lambda$ and $\mathcal{N}$ simultaneously.
\end{theorem}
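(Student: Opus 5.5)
The plan is to treat both conditions as ordinary differential equations in $\rho$ for the function $\mathfrak h'(\rho)=p'(\rho)/\rho$. We work directly from Definition \ref{def:Lambda-N} rather than from the geometric reading of Proposition \ref{lem:Lambda-N-geometric}. This keeps the statement at the level of the pressure law on all of $(0,\infty)$, independent of the range of any particular solution.

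\emph{Part (i).} Since $g>0$ and $\mathfrak h'>0$ by Assumption \ref{ass:pressure}, $\Lambda\equiv0$ on $(0,\infty)$ is equivalent to $\mathfrak h''\equiv0$. Hence $\mathfrak h'(\rho)=p'(\rho)/\rho=k$ for a constant $k$, which is positive by $p'>0$. Integrating $p'(\rho)=k\rho$ gives $p(\rho)=\tfrac k2\rho^2+b$, and $p(0)=0$ together with continuity at $0$ forces $b=0$. So $p=a\rho^2$ with $a=k/2>0$. The converse is immediate: $\mathfrak h'=2a$ is constant, so $\mathfrak h''=0$.

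\emph{Part (ii).} Multiply $\mathcal N$ by $\rho^2\mathfrak h'(\rho)^2/g>0$. Then $\mathcal N\equiv0$ is equivalent to $\rho\,\mathfrak h''(\rho)+\mathfrak h'(\rho)=0$, that is, $\frac{d}{d\rho}\bigl(\rho\,\mathfrak h'(\rho)\bigr)=0$. This is simply the statement $\phi'\equiv0$ already computed in the proof of Proposition \ref{lem:Lambda-N-geometric}. It follows that $\rho\,\mathfrak h'(\rho)=p'(\rho)$ is a constant $c^2>0$, so $p(\rho)=c^2\rho+b$, and again $p(0)=0$ with continuity gives $b=0$. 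Conversely, $p=c^2\rho$ gives $\rho\mathfrak h'=c^2$, so the bracket vanishes. The identification $c^2=R\mathcal T$ is only a naming convention and carries no mathematical content.

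\emph{Final claim.} If both functions vanished, (i) and (ii) would give $a\rho^2=c^2\rho$ for all $\rho>0$. That is impossible, since $a,c^2>0$ while the two sides differ in degree. Alternatively, $\mathfrak h''=0$ together with $\rho\mathfrak h''+\mathfrak h'=0$ would force $\mathfrak h'=0$, contradicting $p'>0$.

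The only step needing care is the passage from $p'$ to $p$ across $\rho=0$. Because $p$ is only $C^1$ (or $C^2$) on the open interval $(0,\infty)$, the integration constant must be fixed by the continuity of $p$ at $0$ and the condition $p(0)=0$ from Assumption \ref{ass:pressure}, not by evaluating $p'$ at $0$. Everything else is routine. The nonvanishing factors $g$, $\mathfrak h'$ and $\rho$ justify every division, and the constant of integration is positive because $p'>0$.
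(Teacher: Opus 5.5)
Your proof is correct and follows essentially the same route as the paper: you reduce each vanishing condition to an ODE for $\mathfrak h'$, integrate on $(0,\infty)$, and fix the constant using $p(0)=0$ and continuity at $0$. The only difference is cosmetic. In (ii) you note directly that $\rho\,\mathfrak h''+\mathfrak h'=(\rho\,\mathfrak h')'=p''$, whereas the paper integrates $(\log\mathfrak h')'=-1/\rho$; your version is slightly more direct.
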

\begin{proof}
(i) Since $\mathfrak{h}'(\rho)=p'(\rho)/\rho>0$ for every $\rho>0$ by Assumption
\ref{ass:pressure}, $\Lambda(\rho)=g\mathfrak{h}''(\rho)/\mathfrak{h}'(\rho)^2$ vanishes at
$\rho$ if and only if $\mathfrak{h}''(\rho)=0$. If $\Lambda\equiv0$ on
$(0,\infty)$, then $\mathfrak{h}''\equiv0$, so $\mathfrak{h}(\rho)=A\rho+B$ for constants
$A,B$, with $\mathfrak{h}'\equiv A$; since $\mathfrak{h}'(\rho)=p'(\rho)/\rho>0$, $A>0$, and
$p'(\rho)=A\rho$ gives $p(\rho)=\tfrac A2\rho^2+C$ for a constant $C$;
$p(0)=0$ (Assumption \ref{ass:pressure}) forces $C=0$, so
$p(\rho)=a\rho^2$ with $a:=A/2>0$. Conversely, if $p(\rho)=a\rho^2$,
$a>0$, then $\mathfrak{h}'(\rho)=p'(\rho)/\rho=2a$ is constant, so $\mathfrak{h}''\equiv0$ and
$\Lambda\equiv0$.

(ii) A direct computation gives
\[
\mathfrak{h}'(\rho)\rho^2\mathcal{N}(\rho)=\mathfrak{h}'(\rho)\rho\Lambda(\rho)+g
=\frac{g\rho \mathfrak{h}''(\rho)}{\mathfrak{h}'(\rho)}+g
=g\Bigl[\frac{\rho \mathfrak{h}''(\rho)}{\mathfrak{h}'(\rho)}+1\Bigr],
\]
so, since $\mathfrak{h}'(\rho)\rho^2>0$, $\mathcal{N}(\rho)=0$ if and only if
$\rho \mathfrak{h}''(\rho)/\mathfrak{h}'(\rho)=-1$, i.e.\
$\tfrac{d}{d\rho}\log \mathfrak{h}'(\rho)=\mathfrak{h}''(\rho)/\mathfrak{h}'(\rho)=-1/\rho$. If
$\mathcal{N}\equiv0$ on $(0,\infty)$, integrating this identity gives
$\log \mathfrak{h}'(\rho)=-\log\rho+C_0$ for a constant $C_0$, i.e.\
$\mathfrak{h}'(\rho)=K/\rho$ with $K:=e^{C_0}>0$; then $p'(\rho)=\mathfrak{h}'(\rho)\rho=K$,
so $p(\rho)=K\rho+D$, and $p(0)=0$ forces $D=0$, giving $p(\rho)=c^2\rho$
with $c^2:=K>0$. Conversely, if $p(\rho)=c^2\rho$, then
$\mathfrak{h}'(\rho)=c^2/\rho$ and $\mathfrak{h}''(\rho)=-c^2/\rho^2$, so
$\rho \mathfrak{h}''(\rho)/\mathfrak{h}'(\rho)=\rho\cdot(-c^2/\rho^2)/(c^2/\rho)=-1$, giving
$\mathcal{N}\equiv0$.

For the final statement: by (i)--(ii), $\Lambda\equiv0$ forces
$p(\rho)=a\rho^2$ and $\mathcal{N}\equiv0$ forces $p(\rho)=c^2\rho$; since
$\rho\mapsto a\rho^2$ and $\rho\mapsto c^2\rho$, for any $a,c^2>0$, are
distinct functions on $(0,\infty)$ -- they agree only at the single
point $\rho=c^2/a$, not identically -- no single pressure law satisfying
Assumption \ref{ass:pressure} can satisfy both.
\end{proof}

\begin{remark}[Reading the classification geometrically]
\label{rem:classification-geometric}
Theorem \ref{thm:Lambda-N-classification} and Proposition
\ref{lem:Lambda-N-geometric} together say that the two hydrostatic
profiles singled out by $\Lambda$ and $\mathcal{N}$ -- affine and exponential in
$y$, respectively -- occur only for the quadratic and the isothermal
pressure laws, and for no others. This is more than an analogy: the
change of variables of Ersoy--Ngom--Sy \cite{ErsoyNgomSy}, recalled in
\S\ref{subsec:wdj-transfer} below, is the operation of
factoring out the exponential-in-$y$ part of the density profile,
$\xi:=\rho\,e^{y/H_s}$ for the stratification height $H_s$ made precise
below; by Proposition \ref{lem:Lambda-N-geometric} this factorization
produces a genuinely $y$-independent $\xi$ if and only if $\rho(t,x,\cdot)$
\emph{is} a pure exponential in $y$, that is, if and only if $\mathcal{N}\equiv0$,
that is (Theorem \ref{thm:Lambda-N-classification}) if and only if the
pressure law is isothermal. This gives a complete, rigorous answer to
the question of why the change of variables underlying
\cite{ErsoyNgomSy,WangDouJiu} is available only at the isothermal law,
in the following precise sense:
the isothermal law is the only pressure law for which a factorization
$\rho=\xi(t,x)\varphi(y)$, with $\xi$ independent of the vertical
variable, arises from the hydrostatic first integral -- available if,
and only if, $\mathcal{N}\equiv0$. Proposition \ref{prop:ens-montgomery} below
sharpens this further: the Ersoy--Ngom--Sy variable $\xi$ is not merely
enabled by $\mathcal{N}\equiv0$, but is, at the isothermal law, literally
$\mathfrak M$ written in exponential form.
\end{remark}

\begin{proposition}[A structural characterization of constant $\Lambda$]
\label{prop:theta-lambda-const}
$\Lambda$ is a (not necessarily zero) constant on $(0,\infty)$ if and
only if
\[
\frac1{\mathfrak h'(\rho)}=a_0+a_1\rho
\]
for constants $a_0,a_1\in\mathbb R$, in which case $\Lambda\equiv-ga_1$.
The quadratic law of Theorem \ref{thm:Lambda-N-classification}(i) is
the sub-case $a_1=0$; the isothermal law of Theorem
\ref{thm:Lambda-N-classification}(ii) is the sub-case $a_0=0$.
\end{proposition}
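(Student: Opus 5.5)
The plan is to rewrite $\Lambda$ as a derivative of $1/\mathfrak h'$ and reduce the statement to a one-line ODE. By Definition \ref{def:Lambda-N}, $\Lambda=g\,\mathfrak h''/(\mathfrak h')^2$. Set $q(\rho):=1/\mathfrak h'(\rho)=\rho/p'(\rho)$. Under Assumption \ref{ass:pressure} with $p\in C^2((0,\infty))$, $q$ is well defined, positive, and $C^1$ on $(0,\infty)$. The chain rule gives $q'=-\mathfrak h''/(\mathfrak h')^2$, so
\[
\Lambda(\rho)=-g\,q'(\rho)\qquad\text{for every }\rho>0.
\]
This identity is exact, and everything else follows from it.

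\emph{Forward direction.} If $\Lambda\equiv\lambda$ is constant on the interval $(0,\infty)$, then $q'\equiv-\lambda/g$. Integrating once gives $q(\rho)=a_0+a_1\rho$ with $a_1:=-\lambda/g$ and $a_0$ an integration constant. Hence $\Lambda\equiv-ga_1$.

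\emph{Converse.} If $1/\mathfrak h'=a_0+a_1\rho$, differentiating gives $q'\equiv a_1$, so $\Lambda\equiv-ga_1$ is constant. The real constraint, implicit in the hypothesis, is $q>0$ on $(0,\infty)$, which forces $a_0\ge0$ and $a_1\ge0$. This need not be added to the statement, since it is automatic whenever such $a_0,a_1$ come from a pressure law satisfying Assumption \ref{ass:pressure}. I will remark on it without making it part of the equivalence.

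\emph{Sub-cases.} First take $a_1=0$. Then $\mathfrak h'\equiv1/a_0$ is constant, so $\mathfrak h''\equiv0$ and $\Lambda\equiv0$. This is exactly the case characterized in Theorem \ref{thm:Lambda-N-classification}(i): $p'(\rho)=\rho/a_0$ and, using $p(0)=0$, $p=a\rho^2$ with $a=1/(2a_0)$. Next take $a_0=0$. Then $\mathfrak h'=1/(a_1\rho)$, so $p'=\rho\,\mathfrak h'=1/a_1$ and $p=c^2\rho$ with $c^2=1/a_1$. This is the isothermal law of Theorem \ref{thm:Lambda-N-classification}(ii). As a consistency check, in this case $\Lambda=-g/c^2$ and $\mathcal N=\Lambda/\rho+g/(\mathfrak h'\rho^2)=-g/(c^2\rho)+g/(c^2\rho)=0$. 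In each sub-case I will also verify the converse: inserting $p=a\rho^2$ or $p=c^2\rho$ into $\rho/p'$ gives the corresponding affine $q$.

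I expect no real obstacle. The only care needed is to state that $q'$ exists, which uses $p\in C^2$ together with $\mathfrak h'>0$, and that $(0,\infty)$ is connected, which is what lets us integrate $q'\equiv$ const to an affine function.
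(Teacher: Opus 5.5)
Your proposal is correct and follows the paper's own argument: the paper likewise sets $u:=1/\mathfrak h'$, notes $u'=-\mathfrak h''/(\mathfrak h')^2=-\Lambda/g$, and integrates on $(0,\infty)$ to get $1/\mathfrak h'=a_0+a_1\rho$ with $\Lambda\equiv-ga_1$. Your extra explicit check of the two sub-cases ($a=1/(2a_0)$, $c^2=1/a_1$) and your observation that positivity of $1/\mathfrak h'$ forces $a_0,a_1\ge0$ are correct additions that the paper only asserts.
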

\begin{proof}
Set $u(\rho):=1/\mathfrak h'(\rho)$, well defined and $C^1$ on
$(0,\infty)$ since $\mathfrak h'>0$ by Assumption \ref{ass:pressure}. By
the quotient rule, $u'(\rho)=-\mathfrak h''(\rho)/\mathfrak
h'(\rho)^2=-\Lambda(\rho)/g$. If $\Lambda\equiv\Lambda_0$ on
$(0,\infty)$, then $u'\equiv-\Lambda_0/g$ is constant, so
$u(\rho)=a_0-(\Lambda_0/g)\rho$ for a constant $a_0$, i.e.\
$1/\mathfrak h'(\rho)=a_0+a_1\rho$ with $a_1:=-\Lambda_0/g$. Conversely,
if $1/\mathfrak h'(\rho)=a_0+a_1\rho$, then $u'\equiv a_1$, so
$\Lambda=-gu'\equiv-ga_1$ is constant.
\end{proof}

\begin{remark}[Motivation, and limits of this reformulation]
\label{rem:theta-lambda-const}
Proposition \ref{prop:theta-lambda-const} has a motivating reading in
terms of the vertical mass flux $\rho v$: reconstructing it from the
continuity equation by integrating in $y$ against the horizontal mass
flux $\rho u$ produces, in general, a horizontal-divergence term plus a
residual coupled to $\nabla_x\mathfrak M$; on the family
$1/\mathfrak h'(\rho)=a_0+a_1\rho$ isolated above, this residual is
absent and the reconstruction reduces to a pure horizontal divergence.
This purely algebraic reformulation does not, however, improve the
closure of mechanism (b) in Theorem \ref{thm:kappa-entropy} at the
level of the present augmented identity: its first cross term is
recovered, term for term, by a three-line horizontal integration by
parts that never invokes this reconstruction, and its second cross term
becomes strictly more complicated under it, introducing a second
horizontal derivative of $\log\rho$ absent elsewhere in this note. This
alternative representation therefore gives an additional structural
interpretation of constant-$\Lambda$ laws, but does not by itself
supply the missing control on $\nabla_xv$.
\end{remark}

This requires one further hypothesis, additional to Assumption
\ref{ass:viscosities} and used only for the remainder of this section:

\begin{assumption}[Linear horizontal viscosity]
\label{ass:linear-viscosity}
$\nu_1(\rho)=\bar\nu_1\rho$ for a constant $\bar\nu_1>0$. Then \eqref{eq:s-def}
gives $s(\rho)=\bar\nu_1\log\rho$ (up to an additive constant, immaterial
since only $\nabla_xs$ enters \eqref{eq:augmented-velocity-def}), so
that the augmented velocity of Section \ref{sec:Lambda-N} is
$U=\bm{u}+2\kappa\bar\nu_1\nabla_x\log\rho$. We do \emph{not} fix $\kappa$: it is
left free in $[0,1]$ throughout the remainder of this section, and every
statement below holds for every such $\kappa$. Its two endpoints recover
the two constructions already available in the literature: $\kappa=0$
gives $U=\bm{u}$, i.e.\ no augmentation at all, and the identity below
reduces to the total energy identity of Theorem \ref{thm:total-energy};
$\kappa=1$ gives the classical Bresch--Desjardins entropy construction,
which \cite{WangDouJiu} itself uses and which, as Remark
\ref{rem:wdj-mechanism} makes precise, is also the formal, non-attained
endpoint of the $\kappa$-entropy method of Bresch, Vasseur and Yu
\cite{BVY}.
\end{assumption}

Under Assumption \ref{ass:linear-viscosity}, $\nu_1(\rho)$ solves
exactly the continuity equation, at the scale $\bar\nu_1$: no defect term
of the form $(\rho\nu_1'(\rho)-\nu_1(\rho))(\operatorname{div}_x\bm{u}+\partial_yv)$,
present for a general $\nu_1$, arises below.

\begin{remark}[Pressure-driven viscosity laws and the Bresch--Desjardins relation]
\label{rem:pressure-driven-viscosity}
Assumption \ref{ass:linear-viscosity} is the first member of a natural
one-parameter family, $\nu_1(\rho)=C_1\,p(\rho)$, $C_1>0$, under which
the viscous drift entering the augmented velocity
\eqref{eq:augmented-velocity-def} aligns with the first hydrostatic
integral itself: by \eqref{eq:s-def},
$s'(\rho)=\nu_1'(\rho)/\rho=C_1\,\mathfrak h'(\rho)$, so
$s(\rho)=C_1\,\mathfrak h(\rho)+C_2$ and, since $\Phi$ does not depend on
$x$, $\nabla_xs=C_1\,\nabla_x\mathfrak M$; as $\mathfrak M$ is vertically
constant for every barotropic law (Proposition \ref{lem:first-integral}),
$\partial_y\nabla_xs=0$ throughout this family, not only at the linear
case. This same family fixes the constitutive combination
\[
\rho\nu_1'(\rho)-\nu_1(\rho)=C_1\bigl(\rho p'(\rho)-p(\rho)\bigr),
\]
which is, up to the factor $2$, the classical Bresch--Desjardins
quantity $\lambda(\rho)=2(\rho\nu_1'(\rho)-\nu_1(\rho))$: the family
$\nu_1(\rho)=C_1\,p(\rho)$ reproduces this combination directly from the
pressure law, rather than imposing it as a separate structural
hypothesis on $\nu_1$, and vanishes within it exactly at the isothermal
law, where $\nu_1(\rho)=C_1\,p(\rho)$ itself reduces to Assumption
\ref{ass:linear-viscosity}. We record this alignment as a natural point
of contact between the first hydrostatic integral and the classical
Bresch--Desjardins relation; we do \emph{not} claim that it produces a
closed hydrostatic Bresch--Desjardins entropy. The augmented-velocity
identity of Theorem \ref{thm:kappa-entropy}, and with it the obstruction
of Remark \ref{prop:mechanism-b-obstruction}, is established under
the linear viscosity of Assumption \ref{ass:linear-viscosity}, which
this family reaches only at the isothermal law ($C_1=1/c^2$); no
closure, at that or any other member of the family, is claimed or
obtained here. Whether the broader viscosity class of Bresch, Vasseur
and Yu \cite{BVY} fares differently at the polytropic law is taken up in
the Perspectives of Section \ref{sec:conclusion}.
\end{remark}

Throughout this proof, tensor indices follow the convention
$(a\otimes b)_{ij}:=a_ib_j$, $(\nabla_x\bm{u})_{ij}:=\partial_ju_i$, and
$(\operatorname{div}_xA)_i:=\sum_j\partial_jA_{ij}$ -- the unique convention
compatible with the already-established identity
$\int_\Omega\operatorname{div}_x(\nu_1D_x\bm u)\cdot\bm u=-\int_\Omega\nu_1|D_x\bm u|^2$
used in the proof of Lemma \ref{lem:kinetic}.

\begin{lemma}[Augmented momentum equation]
\label{lem:augmented-equation}
Under Assumption \ref{ass:linear-viscosity}, for a regular solution of
\eqref{eq:CPE} and every $\kappa\in[0,1]$,
\begin{equation}
\label{eq:W-equation}
\begin{aligned}
\partial_t(\rho U)+\operatorname{div}_x(\rho\,U\otimes\bm{u})+\partial_y(\rho vU)+\nabla_xp(\rho)
&=2(1-\kappa)\operatorname{div}_x\bigl(\nu_1(\rho)D_x\bm{u}\bigr)
+2\kappa\operatorname{div}_x\bigl(\nu_1(\rho)A_x\bm{u}\bigr)\\
&\quad+\partial_y\bigl(\nu_2(\rho)\partial_y\bm{u}\bigr)
-2\kappa\partial_y\bigl(\nu_1(\rho)\nabla_xv\bigr),
\end{aligned}
\end{equation}
where $D_x\bm{u}:=\tfrac12(\nabla_x\bm{u}+(\nabla_x\bm{u})^\top)$ and
$A_x\bm{u}:=\tfrac12(\nabla_x\bm{u}-(\nabla_x\bm{u})^\top)$. Note the order
$U\otimes\bm u$, not $\bm u\otimes U$: $U$ is the transported quantity, $\bm u$
the (mass-conserving) transport velocity, exactly as in the base momentum
equation's own $\rho\bm u\otimes\bm u$ term.
\end{lemma}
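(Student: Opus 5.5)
The plan is to split $\rho U=\rho\bm u+2\kappa\bar\nu_1\nabla_x\rho$, using $\rho\nabla_x\log\rho=\nabla_x\rho$ under Assumption \ref{ass:linear-viscosity}. I will derive a transport equation for the drift part $\nabla_x\rho$ from the horizontally differentiated continuity equation, and add $2\kappa\bar\nu_1$ times it to the horizontal momentum equation of \eqref{eq:CPE}. Set $\bm w:=\nabla_x\log\rho$, so that $\rho\bm w=\nabla_x\rho$ and $\rho U=\rho\bm u+2\kappa\bar\nu_1\rho\bm w$. By linearity of the left-hand side of \eqref{eq:W-equation} in the transported quantity, it suffices to show
\[
\partial_t(\rho\bm w)+\operatorname{div}_x(\rho\,\bm w\otimes\bm u)+\partial_y(\rho v\bm w)
=-\operatorname{div}_x\bigl(\rho(\nabla_x\bm u)^\top\bigr)-\partial_y\bigl(\rho\nabla_xv\bigr).
\]
The pressure term is left untouched; it appears once, coming from the momentum equation.

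The first step is to apply $\nabla_x$ to the continuity equation. This uses the regularity of regular solutions: mixed second derivatives of $\rho$ and commutation of $\partial_t,\partial_y$ with $\nabla_x$. It gives $\partial_t\nabla_x\rho=-\nabla_x\operatorname{div}_x(\rho\bm u)-\nabla_x\partial_y(\rho v)$. I then compute componentwise with the stated index convention $(\operatorname{div}_xA)_i=\sum_j\partial_jA_{ij}$ and $(\nabla_x\bm u)_{ij}=\partial_ju_i$.

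For the horizontal part, $-\partial_i\partial_j(\rho u_j)+\partial_j(\partial_i\rho\,u_j)=-\partial_j(\rho\,\partial_iu_j)$. Since $\partial_iu_j=((\nabla_x\bm u)^\top)_{ij}$, this is $-\operatorname{div}_x(\rho(\nabla_x\bm u)^\top)_i$.

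For the vertical part, $-\partial_i\partial_y(\rho v)+\partial_y(v\,\partial_i\rho)=-\partial_y(\rho\,\partial_iv)$. This gives exactly the displayed identity.

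Multiplying by $2\kappa\bar\nu_1$ and using $\nu_1=\bar\nu_1\rho$, the drift contributes $-2\kappa\operatorname{div}_x(\nu_1(\nabla_x\bm u)^\top)-2\kappa\partial_y(\nu_1\nabla_xv)$ on the right-hand side. Adding this to the momentum equation, the horizontal viscous terms combine as
\[
2\operatorname{div}_x(\nu_1D_x\bm u)-2\kappa\operatorname{div}_x(\nu_1(\nabla_x\bm u)^\top)
=2(1-\kappa)\operatorname{div}_x(\nu_1D_x\bm u)+2\kappa\operatorname{div}_x\bigl(\nu_1(D_x\bm u-(\nabla_x\bm u)^\top)\bigr),
\]
and $D_x\bm u-(\nabla_x\bm u)^\top=A_x\bm u$. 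The vertical viscous term $\partial_y(\nu_2\partial_y\bm u)$ is untouched, and the new vertical term is $-2\kappa\partial_y(\nu_1\nabla_xv)$. This yields \eqref{eq:W-equation} for every $\kappa\in[0,1]$; in fact any real $\kappa$ works, since $\kappa$ enters linearly.

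The only delicate step is bookkeeping the transpose: it must come out as $(\nabla_x\bm u)^\top$ and not $\nabla_x\bm u$, so that the combination produces $+A_x\bm u$ with the correct sign. The ordering $U\otimes\bm u$ must likewise match the convention fixed before the lemma. I will check this by verifying that the identity $\int\operatorname{div}_x(\nu_1D_x\bm u)\cdot\bm u=-\int\nu_1|D_x\bm u|^2$ is consistent with the same index rules. No hydrostatic information, and in particular neither $\Lambda$ nor $\mathcal N$, is needed at this stage.
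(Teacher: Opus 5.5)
Your proposal is correct and follows the paper's proof essentially step for step: the paper also applies $\nabla_x$ to the continuity equation written for $\nu_1(\rho)=\bar\nu_1\rho$, obtaining a transport equation for $q=\bar\nu_1\nabla_x\rho$ with the extra fluxes $\operatorname{div}_x(\nu_1(\rho)(\nabla_x\bm{u})^\top)+\partial_y(\nu_1(\rho)\nabla_xv)$, then adds $2\kappa$ times it to the momentum equation and regroups via $(\nabla_x\bm{u})^\top=D_x\bm{u}-A_x\bm{u}$. The only difference is that you normalize the drift as $\rho\bm{w}=\nabla_x\rho$ instead of $q$, which merely moves the factor $\bar\nu_1$; your index bookkeeping for the transpose and for the ordering $U\otimes\bm{u}$ matches the paper's convention.
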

\begin{proof}
Set $q:=\nabla_x\nu_1(\rho)=\bar\nu_1\nabla_x\rho$. Since $\nu_1(\rho)=\bar\nu_1\rho$
(Assumption \ref{ass:linear-viscosity}), the continuity equation
gives, without correction,
\[
\partial_t\nu_1(\rho)+\operatorname{div}_x(\nu_1(\rho)\bm{u})+\partial_y(\nu_1(\rho)v)=0.
\]
Applying $\nabla_x$ (the operators $\partial_t,\partial_y,\nabla_x$
commute) and the elementary identity
$\partial_{x_i}\partial_{x_j}(f\chi_j)=\partial_{x_j}(\chi_j\partial_{x_i}f)+\partial_{x_j}(f\partial_{x_i}\chi_j)$,
for $f=\nu_1(\rho)$ and $\chi=\bm{u}$, then $\chi=v$ (scalar case), yields
\[
\partial_tq+\operatorname{div}_x(q\otimes\bm{u})+\partial_y(vq)
+\operatorname{div}_x\bigl(\nu_1(\rho)(\nabla_x\bm{u})^\top\bigr)
+\partial_y\bigl(\nu_1(\rho)\nabla_xv\bigr)=0,
\]
where, precisely, $\sum_j\partial_j(u_jq_i)=q_i\operatorname{div}_x\bm u+(\bm
u\cdot\nabla_x)q_i=(\operatorname{div}_x(q\otimes\bm u))_i$ under the
convention above -- the transported quantity $q$ occupies the first slot, the
transport velocity $\bm u$ the second. Add $2\kappa$ times this equation to
the momentum equation of
\eqref{eq:CPE}. Since $q=\rho\nabla_xs(\rho)=\bar\nu_1\nabla_x\rho$ under
Assumption \ref{ass:linear-viscosity}, $\rho U=\rho \bm{u}+2\kappa q$; one
checks componentwise that $\rho \bm{u}\otimes \bm{u}+2\kappa\,q\otimes\bm{u}=\rho
\,U\otimes\bm{u}$ and $\rho v\bm{u}+2\kappa vq=\rho vU$, and, with
$(\nabla_x\bm{u})^\top=D_x\bm{u}-A_x\bm{u}$,
\[
2\nu_1(\rho)D_x\bm{u}-2\kappa\nu_1(\rho)(\nabla_x\bm{u})^\top
=2(1-\kappa)\nu_1(\rho)D_x\bm{u}+2\kappa\nu_1(\rho)A_x\bm{u}.
\]
\end{proof}

\begin{corollary}[Equation of the drift]
\label{cor:drift-equation}
Under the hypotheses of Lemma \ref{lem:augmented-equation}, set
$V:=2\bar\nu_1\nabla_x\log\rho$, so that $U=\bm{u}+\kappa V$ and, in the
notation of the proof above, $q=\rho V/2$. Multiplying the equation for
$q$ established in that proof by $2$,
\begin{equation}
\label{eq:V-equation}
\partial_t(\rho V)+\operatorname{div}_x(\rho V\otimes\bm{u})+\partial_y(\rho vV)
+2\operatorname{div}_x\bigl(\nu_1(\rho)(\nabla_x\bm{u})^\top\bigr)
+2\partial_y\bigl(\nu_1(\rho)\nabla_xv\bigr)=0.
\end{equation}
\end{corollary}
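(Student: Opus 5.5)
The plan is to read off \eqref{eq:V-equation} from the equation for $q=\nabla_x\nu_1(\rho)=\bar\nu_1\nabla_x\rho$ obtained in the proof of Lemma \ref{lem:augmented-equation}. Nothing new needs to be derived: it is a matter of identifying $q$ with $\rho V/2$ and multiplying by $2$. First, under Assumption \ref{ass:linear-viscosity}, I will check the algebraic identity
\[
\rho V=2\bar\nu_1\rho\,\nabla_x\log\rho=2\bar\nu_1\nabla_x\rho=2q .
\]
This holds pointwise because $\rho>0$ for a regular solution. It also gives $U=\bm u+2\kappa\bar\nu_1\nabla_x\log\rho=\bm u+\kappa V$, consistent with the statement.

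Next, I will recall the equation for $q$ from that proof:
\[
\partial_tq+\operatorname{div}_x(q\otimes\bm u)+\partial_y(vq)+\operatorname{div}_x\bigl(\nu_1(\rho)(\nabla_x\bm u)^\top\bigr)+\partial_y\bigl(\nu_1(\rho)\nabla_xv\bigr)=0 .
\]
Multiplying it by $2$ and substituting $2q=\rho V$ in the first three (transport) terms gives $\partial_t(\rho V)+\operatorname{div}_x(\rho V\otimes\bm u)+\partial_y(\rho vV)$. The two viscous terms simply double. Their collection is exactly \eqref{eq:V-equation}.

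The only point needing care is the tensor convention for $\operatorname{div}_x(q\otimes\bm u)$. I will keep the transported quantity in the first slot, as fixed before Lemma \ref{lem:augmented-equation}, so that $(\operatorname{div}_x(\rho V\otimes\bm u))_i=\sum_j\partial_j(\rho V_iu_j)$. Linearity of that operator in its first slot then justifies the substitution. Since the derivation of the $q$-equation (differentiating the exact continuity equation for $\nu_1(\rho)$ horizontally, with commuting derivatives) is already done in Lemma \ref{lem:augmented-equation}, I expect no genuine obstacle; the whole argument is this bookkeeping.
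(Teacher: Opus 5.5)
Your proposal is correct and follows the paper's own argument: under Assumption~\ref{ass:linear-viscosity} you identify $q=\bar\nu_1\nabla_x\rho=\rho V/2$ (pointwise, since a regular solution has $\rho>0$), then double the $q$-equation from the proof of Lemma~\ref{lem:augmented-equation}. Keeping the transported quantity in the first tensor slot is indeed all the bookkeeping that is needed.
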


We stress in advance what the next statement is, and is not: an
\emph{identity}, valid for every $\kappa\in[0,1]$, not an a priori
estimate. The left-hand side is not yet controlled by the terms
available: alongside the augmented kinetic energy $\tfrac12\rho|U|^2$,
it carries the drift's own kinetic energy $\tfrac{\kappa(1-\kappa)}2\rho|V|^2$
-- without which \eqref{eq:kappa-entropy} below would not be the
genuine two-velocity $\kappa$-entropy of Bresch, Desjardins and
Zatorska \cite{BreschDesjardinsZatorska} (Remark
\ref{rem:mechanism-b-open} details the point) -- together with the
vertical dissipation and the $\kappa$-weighted horizontal viscous
components. Whether this yields a closed a priori estimate depends
entirely on the right side, where two residual mechanisms survive:
mechanism (a), examined in Proposition \ref{prop:mechanism-a-closure},
and mechanism (b), examined in Remark
\ref{prop:mechanism-b-obstruction}. A third, formally expected cross
term, tied to the horizontal Hessian $\nabla_x^2\log\rho$, cancels
\emph{identically} once the drift's own equation \eqref{eq:V-equation}
is tested against $\kappa(1-\kappa)V$ and added to the balance, and is
therefore absent from \eqref{eq:kappa-entropy} below by construction;
the cancellation is recorded in the proof, and the endpoints
$\kappa=0,1$ are treated in Remark \ref{rem:mechanism-b-open}.

\begin{theorem}[Augmented-velocity identity, not a closed entropy]
\label{thm:kappa-entropy}
Under Assumption \ref{ass:linear-viscosity}, for a regular solution of
\eqref{eq:CPE} satisfying the boundary conditions of Section
\ref{sec:model}, and every $\kappa\in[0,1]$,
\begin{equation}
\label{eq:kappa-entropy}
\begin{aligned}
&\frac{d}{dt}\int_\Omega\Bigl(\tfrac12\rho|U|^2+\tfrac{\kappa(1-\kappa)}2\rho|V|^2+e(\rho)+\Phi(y)\rho\Bigr)\,dx\,dy\\
&\qquad+2(1-\kappa)\int_\Omega\nu_1(\rho)|D_x\bm{u}|^2\,dx\,dy
+2\kappa\int_\Omega\nu_1(\rho)|A_x\bm{u}|^2\,dx\,dy\\
&\qquad+\int_\Omega\nu_2(\rho)|\partial_y\bm{u}|^2\,dx\,dy
+2\kappa\bar\nu_1\int_\Omega \mathfrak{h}'(\rho)|\nabla_x\rho|^2\,dx\,dy\\
&\qquad=
\underbrace{-2\kappa\bar\nu_1\int_\Omega\nu_2(\rho)\mathcal{N}(\rho)\,\partial_y\bm{u}\cdot\nabla_x\rho\,dx\,dy}_{\text{mechanism (a)}}\\
&\qquad\quad+\underbrace{2\kappa\int_\Omega\nu_1(\rho)\,\partial_y\bm{u}\cdot\nabla_xv\,dx\,dy
+4\kappa\bar\nu_1\int_\Omega\nu_1(\rho)\mathcal{N}(\rho)\,\nabla_x\rho\cdot\nabla_xv\,dx\,dy}_{\text{mechanism (b)}}.
\end{aligned}
\end{equation}
\end{theorem}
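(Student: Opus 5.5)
The plan is to obtain \eqref{eq:kappa-entropy} as the sum of three balances: the augmented momentum equation \eqref{eq:W-equation} tested against $U=\bm u+\kappa V$; the drift equation \eqref{eq:V-equation} tested against $\kappa(1-\kappa)V$; and the static balance of Proposition \ref{prop:integrated-balance}. The identity should then follow term by term, with the Hessian term cancelling between the first two balances.

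\emph{Transport and pressure.} In both tested equations the left-hand transport parts $\partial_t(\rho W)+\operatorname{div}_x(\rho W\otimes\bm u)+\partial_y(\rho vW)$, for $W=U$ and $W=V$, reduce via the continuity equation to $\frac{d}{dt}\int\frac12\rho|W|^2$. The flux terms vanish by periodicity and $v|_{y=0,H}=0$. For the pressure I use $\nabla_xp(\rho)=\rho\,\mathfrak h'(\rho)\nabla_x\rho$ from Proposition \ref{lem:horizontal-gradient}, which gives
\[
\int_\Omega\nabla_xp\cdot U=\int_\Omega\bm u\cdot\nabla_xp+2\kappa\bar\nu_1\int_\Omega\mathfrak h'(\rho)|\nabla_x\rho|^2 .
\]
The first term equals $\frac{d}{dt}\int(e(\rho)+\Phi\rho)$ by Proposition \ref{prop:integrated-balance}. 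The second is the density-gradient dissipation on the left of \eqref{eq:kappa-entropy}.

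\emph{Horizontal viscous terms.} Tested against $\bm u$, the two horizontal viscous terms of \eqref{eq:W-equation} give $-2(1-\kappa)\int\nu_1|D_x\bm u|^2-2\kappa\int\nu_1|A_x\bm u|^2$, since the cross products $D_x\bm u:A_x\bm u$ vanish. Tested against $\kappa V$ they give $-2\kappa(1-\kappa)\int\nu_1D_x\bm u:\nabla_xV-2\kappa^2\int\nu_1A_x\bm u:\nabla_xV$. The second integral vanishes because $\nabla_xV=2\bar\nu_1\nabla_x^2\log\rho$ is symmetric. The first is the Hessian-type term. Testing the term $2\operatorname{div}_x(\nu_1(\nabla_x\bm u)^\top)$ of \eqref{eq:V-equation} against $\kappa(1-\kappa)V$ and integrating by parts gives $-2\kappa(1-\kappa)\int\nu_1(\nabla_x\bm u)^\top:\nabla_xV$. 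By the symmetry of $\nabla_xV$ this equals $-2\kappa(1-\kappa)\int\nu_1D_x\bm u:\nabla_xV$. Because this term sits on the left of \eqref{eq:V-equation}, it enters the balance with the opposite sign to the first-equation contribution, so the two cancel exactly.

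\emph{Vertical terms.} Testing $\partial_y(\nu_2\partial_y\bm u)$ against $U$ and using $\partial_y\bm u|_{y=0,H}=0$ gives $-\int\nu_2|\partial_y\bm u|^2-\kappa\int\nu_2\partial_y\bm u\cdot\partial_yV$. By Lemma \ref{lem:N}, $\partial_yV=2\bar\nu_1\mathcal N(\rho)\nabla_x\rho$, so the second integral is mechanism (a).

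For the term $-2\kappa\partial_y(\nu_1\nabla_xv)$, the boundary terms vanish because $v=0$ on $y=0,H$ forces $\nabla_xv=0$ there. Against $\bm u$ it gives $2\kappa\int\nu_1\nabla_xv\cdot\partial_y\bm u$, and against $\kappa V$ it gives $2\kappa^2\int\nu_1\nabla_xv\cdot\partial_yV$. The term $2\partial_y(\nu_1\nabla_xv)$ of \eqref{eq:V-equation}, tested against $\kappa(1-\kappa)V$, contributes $2\kappa(1-\kappa)\int\nu_1\nabla_xv\cdot\partial_yV$ to the right-hand side. These last two combine into $2\kappa\int\nu_1\nabla_xv\cdot\partial_yV=4\kappa\bar\nu_1\int\nu_1\mathcal N\nabla_x\rho\cdot\nabla_xv$. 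Together with the $\bm u$-contribution this is mechanism (b).

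The main obstacle is bookkeeping rather than any estimate. The delicate points are the signs in the Hessian cancellation and the $\kappa^2+\kappa(1-\kappa)=\kappa$ recombination. The other point to check is that every boundary contribution at $y=0,H$ is genuinely killed, either by $v=0$ (and hence $\nabla_xv=0$) or by $\partial_y\bm u=0$, with no trace condition on $V$ needed. I would verify these signs first at $\kappa=0$, where the computation must reduce to Theorem \ref{thm:total-energy}.
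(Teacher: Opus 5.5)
Your proposal is correct and follows the paper's proof essentially line for line. You test \eqref{eq:W-equation} against $U$, add \eqref{eq:V-equation} tested against $\kappa(1-\kappa)V$ so that the Hessian term cancels, handle the pressure through Proposition \ref{prop:integrated-balance}, and recombine $\kappa^2+\kappa(1-\kappa)=\kappa$ in mechanism (b), with the same boundary arguments. The only difference is cosmetic: you split the test function as $\bm u+\kappa V$, whereas the paper writes $D_xU=D_x\bm u+2\kappa\bar\nu_1\nabla_x^2\log\rho$ and $A_xU=A_x\bm u$, which amounts to the same computation.
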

\begin{proof}
Test \eqref{eq:W-equation} against $U$ and integrate over
$\Omega$. For any $W$ transported by the physical flow $(\bm u,v)$, the
material-derivative identity $\partial_t(\rho W)+\operatorname{div}_x(\rho
W\otimes\bm u)+\partial_y(\rho vW)=\rho D_tW$ (with $D_tW:=\partial_tW+(\bm
u\cdot\nabla_x)W+v\partial_yW$) follows by expanding both sides and using the
continuity equation to cancel the coefficient of $W$; applying it with
$W=U$ and dotting with $U$ gives $U\cdot\rho D_tU=\rho D_t(\tfrac12|U|^2)$,
and applying it again with the scalar $W=\tfrac12|U|^2$ gives, as in Lemma
\ref{lem:kinetic} but with $U$ in place of $\bm{u}$,
\[
U\cdot\bigl[\partial_t(\rho U)+\operatorname{div}_x(\rho\,U\otimes\bm{u})+\partial_y(\rho vU)\bigr]
=\partial_t\bigl(\tfrac12\rho|U|^2\bigr)+\operatorname{div}_x\bigl(\tfrac12\rho|U|^2\bm{u}\bigr)+\partial_y\bigl(\tfrac12\rho|U|^2v\bigr);
\]
integrating, the horizontal flux vanishes by periodicity and the
vertical flux by $v|_{y=0,H}=0$, leaving
$\frac{d}{dt}\int_\Omega\tfrac12\rho|U|^2\,dx\,dy$, with no boundary
contribution. For the pressure term, using
$U=\bm{u}+2\kappa\bar\nu_1\nabla_x\log\rho$, by
\eqref{eq:augmented-velocity-def} and Assumption
\ref{ass:linear-viscosity},
\[
U\cdot\nabla_xp(\rho)=\bm{u}\cdot\nabla_xp(\rho)+2\kappa\bar\nu_1\mathfrak{h}'(\rho)|\nabla_x\rho|^2;
\]
Proposition \ref{prop:integrated-balance} accounts for $\int_\Omega
\bm{u}\cdot\nabla_xp(\rho)\,dx\,dy$. On the right, since
$\nabla_xU=\nabla_x\bm{u}+2\kappa\bar\nu_1\nabla_x^2\log\rho$ and the Hessian
$\nabla_x^2\log\rho$ is symmetric,
\[
D_xU=D_x\bm{u}+2\kappa\bar\nu_1\nabla_x^2\log\rho,
\qquad
A_xU=A_x\bm{u};
\]
testing the
$D_x\bm{u}$-flux of \eqref{eq:W-equation} against $U$ therefore contributes
$-2(1-\kappa)\int_\Omega\nu_1(\rho)|D_x\bm{u}|^2\,dx\,dy$ together with a term
$-4\kappa(1-\kappa)\bar\nu_1\int_\Omega\nu_1(\rho)\,D_x\bm{u}:\nabla_x^2\log\rho\,dx\,dy$
(cancelled below), and testing the $A_x\bm{u}$-flux, using $A_x\bm{u}:D_xU=0$,
contributes
$-2\kappa\int_\Omega\nu_1(\rho)|A_x\bm{u}|^2\,dx\,dy$ alone; the Neumann condition
$\partial_y\bm{u}|_{y=0,H}=0$ together with
$\partial_yU=\partial_y\bm{u}+2\kappa\bar\nu_1\mathcal{N}(\rho)\nabla_x\rho$ (Lemma
\ref{lem:N}) contributes $-\int_\Omega\nu_2(\rho)|\partial_y\bm{u}|^2\,dx\,dy$
together with mechanism (a); $v|_{y=0,H}=0$ on the entire boundary
hyperplane forces $\nabla_xv|_{y=0,H}=0$ as well, so the last term of
\eqref{eq:W-equation} integrates by parts with no boundary
contribution, giving $2\kappa\int_\Omega\nu_1(\rho)\,\partial_y\bm{u}\cdot\nabla_xv\,dx\,dy
+4\kappa^2\bar\nu_1\int_\Omega\nu_1(\rho)\mathcal{N}(\rho)\,\nabla_x\rho\cdot\nabla_xv\,dx\,dy$.

It remains to add to this balance the equation \eqref{eq:V-equation} for the
drift, tested against $\kappa(1-\kappa)V$; this converts the cancelled term
above into the drift's own kinetic energy and completes mechanism (b). The
material-derivative argument used at the start of this proof only used the
transport structure $\partial_t(\rho W)+\operatorname{div}_x(\rho
W\otimes\bm{u})+\partial_y(\rho vW)=\rho D_tW$, common to \eqref{eq:W-equation}
and \eqref{eq:V-equation}; applying it to $W=V$ and then to the scalar
$W=\tfrac12|V|^2$, exactly as for $U$ above, shows that testing the transport
part of \eqref{eq:V-equation} against $\kappa(1-\kappa)V$ and integrating
gives $\frac{d}{dt}\int_\Omega\tfrac{\kappa(1-\kappa)}2\rho|V|^2\,dx\,dy$, with
no boundary contribution (periodicity in $x$, $v|_{y=0,H}=0$). For the
horizontal viscous flux of \eqref{eq:V-equation}, $\nabla_xV=2\bar\nu_1\nabla_x^2\log\rho$
is symmetric, so
\[
\begin{aligned}
\kappa(1-\kappa)\int_\Omega V\cdot\Bigl[-2\operatorname{div}_x\bigl(\nu_1(\rho)(\nabla_x\bm{u})^\top\bigr)\Bigr]\,dx\,dy
&=2\kappa(1-\kappa)\int_\Omega\nu_1(\rho)\,\nabla_xV:(\nabla_x\bm{u})^\top\,dx\,dy\\
&=4\kappa(1-\kappa)\bar\nu_1\int_\Omega\nu_1(\rho)\,D_x\bm{u}:\nabla_x^2\log\rho\,dx\,dy,
\end{aligned}
\]
by parts, with no boundary contribution by periodicity in $x$; this is
exactly the negative of the term recorded above, and the two cancel
identically. For the vertical viscous flux of \eqref{eq:V-equation}, using
$\partial_yV=2\bar\nu_1\mathcal{N}(\rho)\nabla_x\rho$ (Lemma \ref{lem:N}) and
$\nabla_xv|_{y=0,H}=0$, already used above for mechanism (b),
\[
\begin{aligned}
\kappa(1-\kappa)\int_\Omega V\cdot\bigl[-2\partial_y\bigl(\nu_1(\rho)\nabla_xv\bigr)\bigr]\,dx\,dy
&=2\kappa(1-\kappa)\int_\Omega\partial_yV\cdot\nu_1(\rho)\nabla_xv\,dx\,dy\\
&=4\kappa(1-\kappa)\bar\nu_1\int_\Omega\nu_1(\rho)\mathcal{N}(\rho)\,\nabla_x\rho\cdot\nabla_xv\,dx\,dy,
\end{aligned}
\]
by parts in $y$, with no boundary contribution. Adding this to the
$4\kappa^2\bar\nu_1\int_\Omega\nu_1(\rho)\mathcal{N}(\rho)\,\nabla_x\rho\cdot\nabla_xv\,dx\,dy$
term recorded above and using $\kappa^2+\kappa(1-\kappa)=\kappa$ gives the
coefficient $4\kappa\bar\nu_1$ recorded in mechanism (b) of
\eqref{eq:kappa-entropy}. What was mechanism (c) is thereby absent from
\eqref{eq:kappa-entropy}: it cancels identically once
\eqref{eq:V-equation} is brought into the balance, rather than being
discarded or estimated.
\end{proof}

\begin{remark}[Gradient control and the two surviving mechanisms]
\label{rem:mechanism-b-open}
The term $2\kappa\bar\nu_1\int_\Omega \mathfrak{h}'(\rho)|\nabla_x\rho|^2\,dx\,dy$ appearing on
the left of \eqref{eq:kappa-entropy} is the gradient control announced
at the start of this section, active as soon as $\kappa>0$; for the
polytropic law it equals, by \eqref{eq:pressure-dissipation},
$\frac{8\kappa\bar\nu_1a}\gamma\int_\Omega|\nabla_x(\rho^{\gamma/2})|^2\,dx\,dy$.

At $\kappa=0$, $U=\bm{u}$, every term on the right of
\eqref{eq:kappa-entropy} vanishes together with the gradient control
itself, and the left side reduces to
$2\int_\Omega\nu_1(\rho)|D_x\bm{u}|^2\,dx\,dy+\int_\Omega\nu_2(\rho)|\partial_y\bm{u}|^2\,dx\,dy$:
\eqref{eq:kappa-entropy} is then exactly the total energy identity of
Theorem \ref{thm:total-energy}, with no augmentation at all. At
$\kappa=1$, the drift energy $\tfrac{\kappa(1-\kappa)}2\rho|V|^2$ vanishes --
its coefficient $\kappa(1-\kappa)$ is zero at both endpoints -- and
\eqref{eq:kappa-entropy} coincides exactly with the classical
single-velocity Bresch--Desjardins construction, with the dissipation
carried entirely by $A_x\bm{u}$ instead of $D_x\bm{u}$ and mechanisms (a) and
(b) at their largest coefficients. Since no single one of these
left-hand-side contributions is present at both endpoints,
\eqref{eq:kappa-entropy} does not, by itself, control
$U,\bm{u},\nabla_x\rho$ jointly and uniformly over $\kappa\in[0,1]$.

Whether \eqref{eq:kappa-entropy} upgrades, for some $\kappa\in(0,1]$,
from an identity to a genuine a priori estimate depends entirely on the
right-hand side. Mechanism (a) depends on $\nu_2$ alone and can, for the
polytropic law, be absorbed by a suitable choice of $\nu_2$ at every
fixed $\kappa\in(0,1]$ (Proposition \ref{prop:mechanism-a-closure}
below), and vanishes identically at the isothermal law for every $\nu_2$
(Theorem \ref{thm:Lambda-N-classification}). What would formally have been a third, $\kappa(1-\kappa)$-proportional
mechanism, tied to the horizontal Hessian $\nabla_x^2\log\rho$ rather than to
$\Lambda$ or $\mathcal{N}$, does not raise a separate question here: it is not
present in \eqref{eq:kappa-entropy} at all, since testing the drift's own
equation \eqref{eq:V-equation} against $\kappa(1-\kappa)V$ cancels it
identically against the drift energy on the left, as recorded in the proof
of Theorem \ref{thm:kappa-entropy}. Mechanism (b) does not depend on
$\nu_2$ at all, and its first term, present at \emph{every} barotropic
law and every $\kappa\in(0,1]$, is carried entirely by $\nabla_xv$. We
now show precisely why this term escapes control by the tools developed
so far, and why this is a structural consequence of the hydrostatic
reduction -- the replacement of the vertical momentum evolution
equation by the algebraic balance \eqref{eq:intro-hydrostatic}, not
gravity by itself -- and no estimate developed in the present direct
route controls it.

This $\nabla_xv$-term does not arise as a complication added onto an
independently established density-gradient bound: it sits on the
right-hand side of the very identity \eqref{eq:kappa-entropy} whose
left-hand side carries the candidate $\nabla_x\rho$-control, both
produced by testing the same augmented momentum equation against the
same $U$. Density compactness and vertical-velocity control are
consequently not two separate problems to be solved one after the
other, but two sides of a single unclosed identity: the density-gradient
term on the left of \eqref{eq:kappa-entropy} cannot be validated as a
genuine a priori bound -- and hence cannot supply the compactness the
chain described in the Introduction requires -- without first
controlling the $\nabla_xv$-term on the right.
\end{remark}

Integrating the continuity equation of \eqref{eq:CPE} in $y$ from $0$ to
$y$ at fixed $(t,x)$, and using $\rho v|_{y=0}=0$ (impermeability), gives
the reconstruction formula
\begin{equation}
\label{eq:v-reconstruction}
\rho(t,x,y)\,v(t,x,y)=-\int_0^y\Bigl[\partial_t\rho+\operatorname{div}_x(\rho \bm{u})\Bigr](t,x,y')\,dy':
\end{equation}
$v$ is not an independent unknown of \eqref{eq:CPE}, but, at fixed
$(t,x)$, the vertical primitive of $-[\partial_t\rho+\operatorname{div}_x(\rho \bm{u})]$
divided by $\rho$, satisfying no momentum-type equation, and hence no
energy or dissipation identity, of its own.

This is the obstacle on which the hydrostatic route parts ways with its
non-hydrostatic parent. The other ingredient the Bresch--Desjardins
route requires at this stage -- absorbing the cross term generated by
the degenerate viscosity, mechanism (a) of Theorem
\ref{thm:kappa-entropy} -- does transpose to the CPE, closing explicitly
for the polytropic law and vanishing identically at the isothermal one
(\S\ref{sec:polytropic}). Mechanism (b) is the one term of the identity
for which no such treatment, by the direct route developed here, is
available, at any barotropic law.

\begin{remark}[Absence of direct control of $\nabla_xv$ via the reconstruction route]
\label{prop:mechanism-b-obstruction}
For every barotropic law satisfying Assumption \ref{ass:pressure}, the
\emph{direct route} to a bound on $\nabla_xv$ -- through the
reconstruction formula \eqref{eq:v-reconstruction} alone -- fails to
supply one: by \eqref{eq:v-reconstruction}, $\nabla_xv$ involves
$\nabla_x\partial_t\rho$ and $\nabla_x\operatorname{div}_x(\rho \bm{u})$ --
quantities carrying, respectively, one derivative more in $x$ than
$\partial_t\rho$ and two derivatives in $x$ applied to $\rho \bm{u}$. Neither
is controlled by class (ii) of Definition \ref{def:weak-solution}, which
bounds only
\[
\rho\in L^\infty(0,T;L^1(\Omega)),\quad
e(\rho)\in L^\infty(0,T;L^1(\Omega)),\quad
\sqrt\rho\,\bm{u}\in L^\infty(0,T;L^2(\Omega)),\quad
\sqrt\rho\,v\in L^2((0,T)\times\Omega),
\]
together with $\sqrt{\nu_i(\rho)}$-weighted first derivatives of $\bm{u}$
alone; nor does
Theorem \ref{thm:total-energy} add any control on $\nabla_x\rho$ or on
any $x$-derivative of $\rho \bm{u}$ beyond what is already in this class,
since its proof (Lemma \ref{lem:kinetic}, Proposition
\ref{prop:integrated-balance}) never differentiates the continuity
equation in $x$. \textbf{The available energy structure developed in
this note does not provide, by this route, the regularity needed to
control this term.} This is a precise statement about the one route
examined here, not a claim that every conceivable indirect route --
for instance one built from the augmented-velocity identity of Theorem
\ref{thm:kappa-entropy} itself, which does control a weighted
$\nabla_x\rho$ -- is excluded; such an indirect bound on $\nabla_xv$ is
not obtained by the present argument. Consequently the term
$2\kappa\int_\Omega\nu_1(\rho)\partial_y\bm{u}\cdot\nabla_xv\,dx\,dy$ of mechanism
(b) in \eqref{eq:kappa-entropy}, present for every $\kappa\in(0,1]$, is
not absorbed into the non-negative left side of that identity by any
estimate built, via \eqref{eq:v-reconstruction}, from Theorem
\ref{thm:total-energy} and Definition \ref{def:weak-solution} alone.
This persists at every pressure law identified by Theorem
\ref{thm:Lambda-N-classification},
including the isothermal one: since this term does not involve $\mathcal{N}$, its
presence is unaffected by $\mathcal{N}\equiv0$, which only removes the second,
structurally milder term of mechanism (b) (Remark
\ref{rem:isothermal-dichotomy}). The full import of this observation --
that it reflects the hydrostatic regime itself rather than an artifact
of the particular estimates assembled here -- is the content of Remark
\ref{rem:gravity-role}, which should be read together with this remark.
\end{remark}

\begin{remark}[The role of the hydrostatic regime]
\label{rem:gravity-role}
The absence of direct control identified in Remark
\ref{prop:mechanism-b-obstruction} is a structural
consequence of the hydrostatic regime, not an accident of the
particular estimates developed here. In the hydrostatic regime, the
vertical momentum evolution equation of the parent compressible
Navier--Stokes system is replaced by the pressure--gravity balance
\eqref{eq:intro-hydrostatic}: no evolution equation is left to test $v$
against, and \eqref{eq:v-reconstruction} is
all that remains of it. Were the vertical balance retained as a genuine evolution
equation -- as it is in the non-hydrostatic theory -- $v$ would be tested against it exactly as $\bm{u}$ is in Lemma
\ref{lem:kinetic}, inheriting gradient control from its viscous term.
The hydrostatic reduction is thus not a passive simplification in Theorem
\ref{thm:total-energy}: it is the mechanism responsible for the absence
of the direct control Remark
\ref{prop:mechanism-b-obstruction} identifies.

Concretely, what resists absorption is the first term of mechanism (b),
$2\kappa\int_\Omega\nu_1(\rho)\partial_y\bm{u}\cdot\nabla_xv\,dx\,dy$, present for
every barotropic law and every $\kappa\in(0,1]$. Absorbing it by Young's
inequality, as Proposition \ref{prop:mechanism-a-closure} does for
mechanism (a), would need an a priori bound on
$\int_0^T\!\!\int_\Omega\nu_1(\rho)|\nabla_xv|^2\,dx\,dy\,dt$. In the
classical Bresch--Desjardins/Mellet--Vasseur/Vasseur--Yu theory for
compressible Navier--Stokes \cite{BD1,BD2,MelletVasseur,VasseurYu,VasseurYu2,BVY},
this is exactly the bound testing the vertical momentum equation
against its own augmented velocity would supply: that theory treats
every velocity component symmetrically, testing each against its own
momentum equation. Gravity by itself does not remove this symmetry --
in the non-hydrostatic compressible Navier--Stokes equations, gravity
is present and the vertical momentum equation still exists, so $v$ can
still be tested against it. What removes the symmetry here is the
hydrostatic reduction: it is the modeling step of replacing the
vertical momentum \emph{evolution} equation by the algebraic
pressure--gravity balance \eqref{eq:intro-hydrostatic} that eliminates
the equation $v$ would need to be tested against. Gravity enters only
as the physical force balanced in \eqref{eq:intro-hydrostatic}, not as
the cause of the asymmetry itself. No estimate available in this note --
not Theorem \ref{thm:total-energy}, not Definition
\ref{def:weak-solution}, not the reconstruction
\eqref{eq:v-reconstruction} itself -- supplies a substitute. We do not
attempt to close this term artificially; identifying which classical
mechanism it corresponds to, and why the hydrostatic reduction removes
it along the direct route developed here, is a central structural
observation of this note -- not a claim that no other route can supply
the missing control.
\end{remark}

\section{Applications}
\label{sec:polytropic}

The classification of Theorem \ref{thm:Lambda-N-classification} singles
out two pressure laws by name -- quadratic and isothermal -- inside the
family that contains both and interpolates between them. We now make
the pressure potential and enthalpy, the structural functions
$\Lambda,\mathcal{N}$, and the absorption of mechanism (a) explicit on that family,
$p(\rho)=a\rho^\gamma$ with $a>0$ and $\gamma\ge1$. The
case $\gamma>1$, treated in \S\ref{subsec:polytropic}, is illustrated
by the construction of Definition \ref{def:pressure-potential} with
$\rho_\star=0$. Its boundary member $\gamma=1$ -- the isothermal law
$p(\rho)=c^2\rho$, i.e.\ $a=c^2$ -- is a genuine particular case of the
same family, not a law external to it: it is treated separately in
\S\ref{subsec:isothermal} only because that construction, specifically
the choice $\rho_\star=0$, fails precisely at $\gamma=1$ and an
independent computation is required there.

\subsection{The polytropic case}
\label{subsec:polytropic}

We illustrate the preceding structure for
\begin{equation}
\label{eq:polytropic}
p(\rho)=a\rho^\gamma,\qquad a>0,\qquad\gamma>1,
\end{equation}
strictly increasing with $p(0)=0$. Since $\varrho\mapsto p(\varrho)/\varrho^2=a\varrho^{\gamma-2}$
is integrable at $0$ for $\gamma>1$, we may take $\rho_\star=0$ in
Definition \ref{def:pressure-potential}.

With $\rho_\star=0$ in Definition \ref{def:pressure-potential}, direct
computation of $e'(\rho)$ gives, for the polytropic law
\eqref{eq:polytropic},
\begin{equation}
\label{eq:polytropic-potential}
e(\rho)=\frac a{\gamma-1}\rho^\gamma,\qquad
\mathfrak{h}(\rho)=\frac{a\gamma}{\gamma-1}\rho^{\gamma-1},
\end{equation}
consistently with \eqref{eq:e-identities}: $\rho
e'(\rho)-e(\rho)=a\rho^\gamma=p(\rho)$ and
$e''(\rho)=a\gamma\rho^{\gamma-2}=p'(\rho)/\rho$.

\begin{proposition}[Polytropic hydrostatic profile, and possible vacuum]
\label{prop:polytropic-profile}
\label{rem:vacuum}
Every regular, strictly positive density satisfying
\eqref{eq:intro-hydrostatic} under \eqref{eq:polytropic} satisfies
\begin{equation}
\label{eq:polytropic-profile}
\rho(t,x,y)=\left[\frac{\gamma-1}{a\gamma}\bigl(\mathfrak{M}(t,x)-\Phi(y)\bigr)\right]^{\frac1{\gamma-1}}
\end{equation}
wherever $\mathfrak{M}(t,x)-\Phi(y)>0$. When $\mathfrak{M}(t,x)-\Phi(y)$ reaches zero, one writes
instead $\rho=\bigl[\tfrac{\gamma-1}{a\gamma}(\mathfrak{M}-\Phi)\bigr]_+^{1/(\gamma-1)}$,
$\eta_+=\max\{\eta,0\}$; Lemma \ref{lem:polytropic-vacuum-weak} below shows
this extension genuinely solves \eqref{eq:HB-weak}, rather than merely
being declared admissible by convention. Unlike the isothermal case, for which
$\mathfrak{h}(\rho)=c^2\log\rho$ is a bijection of $(0,\infty)$ onto $\mathbb R$,
so that a regular, strictly positive profile represented by a finite
Montgomery potential cannot reach vacuum at any finite vertical level
(made explicit in Proposition \ref{prop:isothermal-profile} below --
whole-column vacuum remains admissible in the distributional weak
formulation of Definition \ref{def:weak-solution}, by Proposition
\ref{prop:wdj-hydrostatic-transfer}), the polytropic enthalpy
\eqref{eq:e-identities} maps $(0,\infty)$ onto $(0,\infty)$ only, so
that $\mathfrak{M}-\Phi\le0$ genuinely corresponds to vacuum already at
the level of regular profiles.
\end{proposition}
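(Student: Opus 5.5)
The formula itself follows from the first hydrostatic integral by inverting the explicit polytropic enthalpy. By Proposition \ref{lem:first-integral}, every regular, strictly positive solution of \eqref{eq:intro-hydrostatic} satisfies $\mathfrak h(\rho(t,x,y))+\Phi(y)=\mathfrak M(t,x)$, with $\mathfrak M$ independent of $y$. With $\rho_\star=0$, \eqref{eq:polytropic-potential} gives $\mathfrak h(\rho)=\frac{a\gamma}{\gamma-1}\rho^{\gamma-1}$. Since $\gamma>1$, this is a continuous, strictly increasing bijection of $(0,\infty)$ onto $(0,\infty)$, with inverse $z\mapsto\bigl[\frac{\gamma-1}{a\gamma}z\bigr]^{1/(\gamma-1)}$. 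Applying this inverse to $\mathfrak h(\rho)=\mathfrak M-\Phi$ yields \eqref{eq:polytropic-profile}. Strict positivity of $\rho$ forces $\mathfrak M(t,x)-\Phi(y)=\mathfrak h(\rho)>0$ at every point where a regular positive solution lives, so the formula holds exactly on the stated set.

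For the vacuum extension, I first note that the map $z\mapsto[\frac{\gamma-1}{a\gamma}z]_+^{1/(\gamma-1)}$ is continuous on $\mathbb R$ and vanishes for $z\le0$. It therefore extends \eqref{eq:polytropic-profile} continuously to the set where $\mathfrak M-\Phi$ reaches zero. That this extension genuinely satisfies \eqref{eq:HB-weak} is the content of Lemma \ref{lem:polytropic-vacuum-weak}, which the statement defers to. The plan behind that lemma is as follows. Put $Q(z):=p(\mathfrak h^{-1}(z_+))=a[\frac{\gamma-1}{a\gamma}z_+]^{\gamma/(\gamma-1)}$. Then $Q\in C^1(\mathbb R)$, because the exponent $\gamma/(\gamma-1)>1$, and $Q'(z)=\mathfrak h^{-1}(z_+)=\rho$, as in the converse part of Lemma \ref{lem:HB-weak-pointwise}. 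Hence $y\mapsto p(\rho)=Q(\mathfrak M-gy)$ is $C^1$ with $\partial_yp(\rho)=-g\rho$ classically, including across the interface. Integrating by parts against $\chi$ then gives \eqref{eq:HB-weak}. The only delicate point, and the one I expect to need care, is this $C^1$ matching of $Q$ at $z=0$. For $\gamma\ge2$ the density $\rho$ itself is merely continuous (and Hölder) there, but the exponent $\gamma/(\gamma-1)>1$ still makes $p(\rho)$ differentiable, which is all that is needed.

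The comparison with the isothermal case is immediate. There $\mathfrak h=c^2\log\rho$ maps $(0,\infty)$ onto $\mathbb R$, so every finite value of $\mathfrak M-\Phi$ corresponds to a positive density. In the polytropic case, by contrast, the range of $\mathfrak h$ is $(0,\infty)$, so the level $\mathfrak M-\Phi=0$ corresponds exactly to $\rho=0$, and $\mathfrak M-\Phi\le0$ genuinely encodes vacuum. The detailed isothermal statement is left to Proposition \ref{prop:isothermal-profile}.
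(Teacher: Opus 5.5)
Your proposal is correct and follows the paper's proof, which is exactly the inversion of $\frac{a\gamma}{\gamma-1}\rho^{\gamma-1}=\mathfrak M-\Phi(y)$ obtained from the first hydrostatic integral. Your sketch of the vacuum extension writes $p(\rho)=Q(\mathfrak M-gy)$, with $Q\in C^1(\mathbb R)$ because $\gamma/(\gamma-1)>1$ and $Q'(z)=\mathfrak h^{-1}(z_+)=\rho$. This is the same argument as Lemma~\ref{lem:polytropic-vacuum-weak}; you phrase it through the $Q=p\circ\mathfrak h^{-1}$ device from the converse part of Lemma~\ref{lem:HB-weak-pointwise}, whereas the paper differentiates directly.
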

\begin{proof}
By \eqref{eq:first-integral}, $\frac{a\gamma}{\gamma-1}\rho^{\gamma-1}=\mathfrak{M}-\Phi(y)$;
raise to the power $1/(\gamma-1)$.
\end{proof}

\begin{lemma}[The vacuum-extended polytropic profile solves \eqref{eq:HB-weak}]
\label{lem:polytropic-vacuum-weak}
Fix $(t,x)$, $\mathfrak M\in\mathbb R$, and set, for \eqref{eq:polytropic},
\[
\rho(y):=\Bigl[\tfrac{\gamma-1}{a\gamma}\bigl(\mathfrak M-\Phi(y)\bigr)\Bigr]_+^{1/(\gamma-1)},
\qquad y\in(0,H).
\]
Then $y\mapsto p(\rho(y))$ is $C^1$ on $(0,H)$, with
$\partial_yp(\rho(y))=-g\rho(y)$ classically at \emph{every} $y$ --
including at $y_0:=\mathfrak M/g$ when $y_0\in(0,H)$, where the profile
crosses into vacuum. Consequently the extension by $\eta_+$ in
Proposition \ref{prop:polytropic-profile} is not a notational
convention standing in for an unresolved distributional statement:
$\rho$ solves the one-dimensional relation
\eqref{eq:HB-weak-fiber} classically for every $\gamma>1$, and
integrating in $x$ as in the proof of Lemma
\ref{lem:HB-weak-pointwise} recovers \eqref{eq:HB-weak} whenever
$\mathfrak M(t,\cdot)$ is measurable.
\end{lemma}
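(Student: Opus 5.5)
The plan is to compute $p(\rho(y))$ explicitly as a function of $y$ and read off its derivative directly, rather than going through $\rho$ itself, which is not differentiable at the vacuum interface when $1/(\gamma-1)<1$. Write $\eta(y):=\mathfrak M-\Phi(y)=\mathfrak M-gy$, an affine, strictly decreasing function of $y$. Since $p(\rho)=a\rho^\gamma$ and $\gamma/(\gamma-1)>1$, we get
\[
p(\rho(y))=a\Bigl[\tfrac{\gamma-1}{a\gamma}\,\eta(y)\Bigr]_+^{\gamma/(\gamma-1)}=:Q\bigl(\eta(y)\bigr),
\qquad Q(z):=a\Bigl(\tfrac{\gamma-1}{a\gamma}\Bigr)^{\gamma/(\gamma-1)}z_+^{\gamma/(\gamma-1)}.
\]
The first step is to show that $Q\in C^1(\mathbb R)$. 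On $z>0$ it is a positive power. On $z<0$ it vanishes identically. At $z=0$ the difference quotient is $O(|z|^{1/(\gamma-1)})\to0$, because the exponent $\gamma/(\gamma-1)$ exceeds $1$. The derivative is $Q'(z)=\frac{\gamma-1}{a\gamma}\,\frac{a\gamma}{\gamma-1}\bigl[\frac{\gamma-1}{a\gamma}z\bigr]_+^{1/(\gamma-1)}\cdot$ (constant), which I would simplify to
\[
Q'(z)=\Bigl[\tfrac{\gamma-1}{a\gamma}z\Bigr]_+^{1/(\gamma-1)},
\]
which is continuous, including at $0$.

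This is exactly the identity $Q'=\mathfrak h^{-1}$ already seen in the converse part of Lemma \ref{lem:HB-weak-pointwise}, now extended by $0$. The chain rule then gives $\partial_y p(\rho(y))=Q'(\eta(y))\,\eta'(y)=-g\rho(y)$ classically at every $y\in(0,H)$, including $y_0=\mathfrak M/g$. Continuity of this derivative shows $y\mapsto p(\rho(y))$ is $C^1$. The main point to check carefully is the one-sided derivative at $z=0$. It is the only spot where $\gamma>1$ matters, and I will verify it directly from the difference quotient, not from the formula for $Q'$.

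Finally, since $p(\rho(\cdot))\in C^1(0,H)$ and $\psi\in C_c^\infty(0,H)$, a classical integration by parts with no boundary terms gives \eqref{eq:HB-weak-fiber}. For $\mathfrak M(t,\cdot)$ measurable, $\rho$ and $p(\rho)$ are measurable and locally bounded in $(x,y)$, since $\rho\le[\frac{\gamma-1}{a\gamma}\mathfrak M_+]^{1/(\gamma-1)}$. Testing against $\chi\in C_c^\infty(\Omega)$ and applying Fubini fiberwise, as in the proof of Lemma \ref{lem:HB-weak-pointwise}, then yields \eqref{eq:HB-weak}. The needed local integrability in $x$ follows from local boundedness of $\mathfrak M$, or should be assumed alongside measurability.
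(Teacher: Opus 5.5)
Your proposal is correct and follows essentially the paper's proof: both write $p(\rho(y))$ as a function of the affine $\eta(y)=\mathfrak M-gy$, use $\gamma/(\gamma-1)>1$ to make $s\mapsto s_+^{\gamma/(\gamma-1)}$ of class $C^1$ on all of $\mathbb R$, and conclude $\partial_yp(\rho)=-g\rho$ at every $y$ by the chain rule, followed by the Fubini argument of Lemma \ref{lem:HB-weak-pointwise}. Your caveat is also well taken: measurability of $\mathfrak M(t,\cdot)$ alone does not give $\rho,p(\rho)\in L^1_{loc}(\Omega)$ for the final integration in $x$, and the lemma implicitly takes this integrability from the standing setting of Remark \ref{rem:hydrostatic-weak} and Lemma \ref{lem:HB-weak-pointwise}.
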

\begin{proof}
Write $\eta(y):=\mathfrak M-\Phi(y)=\mathfrak M-gy$, affine (hence
$C^\infty$) with $\eta'\equiv-g$, and $\kappa:=(\gamma-1)/(a\gamma)>0$, so
that $\rho=(\kappa \eta_+)^{1/(\gamma-1)}$ and
$p(\rho)=a\rho^\gamma=a\kappa^{\gamma/(\gamma-1)}(\eta_+)^{\gamma/(\gamma-1)}$.
Since $\gamma>1$, the exponent $\gamma/(\gamma-1)$ exceeds $1$, so
$s\mapsto(s_+)^{\gamma/(\gamma-1)}$ is $C^1$ on all of $\mathbb R$: for
$s>0$ its derivative is $\tfrac{\gamma}{\gamma-1}s^{1/(\gamma-1)}$,
which tends to $0$ as $s\to0^+$ because the exponent $1/(\gamma-1)$ is
positive, matching the derivative $0$ from $s\le0$. As a $C^1$ function
composed with the affine $\eta$, $p(\rho)$ is therefore $C^1$ on $(0,H)$,
with, by the chain rule,
\[
\partial_yp(\rho(y))
=a\kappa^{\gamma/(\gamma-1)}\cdot\frac\gamma{\gamma-1}\,\eta_+(y)^{1/(\gamma-1)}\cdot(-g)
=-g\cdot\frac{a\gamma\kappa}{\gamma-1}\cdot\kappa^{1/(\gamma-1)}\eta_+(y)^{1/(\gamma-1)}
=-g\cdot\frac{a\gamma\kappa}{\gamma-1}\,\rho(y),
\]
using $\kappa^{1/(\gamma-1)}\eta_+(y)^{1/(\gamma-1)}=(\kappa \eta_+(y))^{1/(\gamma-1)}=\rho(y)$.
Since $\kappa=(\gamma-1)/(a\gamma)$, $a\gamma\kappa/(\gamma-1)=1$
identically, giving $\partial_yp(\rho(y))=-g\rho(y)$ at every $y$,
including $y_0$ (where both sides vanish). The remaining assertion,
integrating \eqref{eq:HB-weak-fiber} in $x$ against $\varphi\in
C_c^\infty(\mathbb T^2)$ to recover \eqref{eq:HB-weak}, is the same
Fubini argument used in the converse direction of Lemma
\ref{lem:HB-weak-pointwise}.
\end{proof}

\begin{remark}[Regularity of $\rho$ itself at the free boundary]
\label{rem:physical-vacuum-threshold}
Unlike $p(\rho)$, the density $\rho=(\kappa \eta_+)^{1/(\gamma-1)}$ need
not be $C^1$ at $y_0$; set $\alpha:=1/(\gamma-1)$. For $1<\gamma<2$,
$\alpha>1$, so
\[
\frac{d}{d\eta}\eta_+^\alpha=\alpha \eta_+^{\alpha-1}\longrightarrow0
\qquad\text{as }\eta\to0^+,
\]
and $\rho$ is $C^1$ at the interface, with vanishing derivative there.
At $\gamma=2$, $\alpha=1$ and the profile is piecewise linear: Lipschitz
but not $C^1$. For $\gamma>2$, $\alpha\in(0,1)$ and $\rho\in
C^{0,\alpha}$ only, with a genuine cusp -- loosely analogous to the
\emph{physical vacuum} degeneracy of the free-boundary compressible
Euler and primitive equations (Liu and Titi \cite{LiuTiti1,LiuTiti2},
\S\ref{sec:related-work}), an analogy in regularity class only. That
$p(\rho)$, not $\rho$, is regular across the interface at every
$\gamma>1$ is why Lemma \ref{lem:polytropic-vacuum-weak} is phrased
through $p(\rho)$.
\end{remark}

For later use, note the elementary pressure-dissipation rewriting: since
$p'(\rho)/\rho=a\gamma\rho^{\gamma-2}$ and
$\nabla_x\rho^{\gamma/2}=\tfrac\gamma2\rho^{\gamma/2-1}\nabla_x\rho$, so that
$|\nabla_x\rho^{\gamma/2}|^2=\tfrac{\gamma^2}4\rho^{\gamma-2}|\nabla_x\rho|^2$,
\begin{equation}
\label{eq:pressure-dissipation}
\frac{p'(\rho)}\rho|\nabla_x\rho|^2
=\frac{4a}\gamma\bigl|\nabla_x\rho^{\gamma/2}\bigr|^2.
\end{equation}
This rewriting, the standard tool of Bresch--Desjardins-type entropy
estimates, is the one required to control the density gradient in any
subsequent construction of a hydrostatic entropy functional for this
system.

\begin{proposition}[Polytropic structural functions]
\label{prop:polytropic-Lambda-N}
For \eqref{eq:polytropic},
\begin{equation}
\label{eq:polytropic-Lambda-N}
\Lambda(\rho)=\frac{g(\gamma-2)}{a\gamma}\,\rho^{1-\gamma},
\qquad
\mathcal{N}(\rho)=\frac{g(\gamma-1)}{a\gamma}\,\rho^{-\gamma}.
\end{equation}
\end{proposition}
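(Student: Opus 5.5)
This is a direct substitution into Definition \ref{def:Lambda-N}, using the polytropic enthalpy \eqref{eq:polytropic-potential}. Only $\mathfrak h'$ and $\mathfrak h''$ enter, so the choice $\rho_\star=0$ plays no role. From $\mathfrak h(\rho)=\frac{a\gamma}{\gamma-1}\rho^{\gamma-1}$, or directly from $\mathfrak h'=p'/\rho$ as in \eqref{eq:e-identities}, we get
\[
\mathfrak h'(\rho)=a\gamma\rho^{\gamma-2},\qquad \mathfrak h''(\rho)=a\gamma(\gamma-2)\rho^{\gamma-3}.
\]
Both are well defined and continuous on $(0,\infty)$, so the $C^2$ hypothesis of Definition \ref{def:Lambda-N} holds.

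For $\Lambda$, compute
\[
\Lambda(\rho)=\frac{g\,a\gamma(\gamma-2)\rho^{\gamma-3}}{a^2\gamma^2\rho^{2\gamma-4}}.
\]
The exponent of $\rho$ is $(\gamma-3)-(2\gamma-4)=1-\gamma$, which gives $\frac{g(\gamma-2)}{a\gamma}\rho^{1-\gamma}$.

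For $\mathcal N$, I would avoid adding the two terms of \eqref{eq:Lambda-N-def} directly. Instead I would use the identity from the proof of Theorem \ref{thm:Lambda-N-classification}(ii):
\[
\mathcal N(\rho)=\frac{g}{\mathfrak h'(\rho)\rho^2}\Bigl[\frac{\rho\,\mathfrak h''(\rho)}{\mathfrak h'(\rho)}+1\Bigr].
\]
Here $\rho\mathfrak h''/\mathfrak h'=\gamma-2$, so the bracket equals $\gamma-1$. The prefactor is $g/(a\gamma\rho^{\gamma})$. Multiplying gives $\frac{g(\gamma-1)}{a\gamma}\rho^{-\gamma}$.

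As a check, I would also add the two terms of \eqref{eq:Lambda-N-def} directly. This gives $\frac{g(\gamma-2)}{a\gamma}\rho^{-\gamma}+\frac{g}{a\gamma}\rho^{-\gamma}$, which is the same result.

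Two consistency checks against Theorem \ref{thm:Lambda-N-classification} follow immediately. First, $\Lambda\equiv0$ exactly at $\gamma=2$. Second, $\mathcal N$ vanishes only in the limit $\gamma=1$, which is the isothermal law excluded from \eqref{eq:polytropic}.

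There is no real obstacle. The only step needing care is the exponent bookkeeping for $\mathfrak h'^2$, namely $\rho^{2\gamma-4}$.
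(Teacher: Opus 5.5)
Your proof is correct and is essentially the paper's argument: substitute $\mathfrak h'(\rho)=a\gamma\rho^{\gamma-2}$ and $\mathfrak h''(\rho)=a\gamma(\gamma-2)\rho^{\gamma-3}$ into Definition~\ref{def:Lambda-N}. The paper gets $\mathcal N$ by adding $\Lambda/\rho$ and $g/(\mathfrak h'\rho^2)$ directly, which is exactly your ``check''; going through the factored identity from the proof of Theorem~\ref{thm:Lambda-N-classification}(ii) is an equivalent repackaging of the same computation.
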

\begin{proof}
By \eqref{eq:polytropic-potential},
$\mathfrak{h}'(\rho)=a\gamma\rho^{\gamma-2}$,
$\mathfrak{h}''(\rho)=a\gamma(\gamma-2)\rho^{\gamma-3}$, so
$\Lambda(\rho)=g\,a\gamma(\gamma-2)\rho^{\gamma-3}/(a\gamma\rho^{\gamma-2})^2
=g(\gamma-2)\rho^{1-\gamma}/(a\gamma)$. For $\mathcal{N}$,
$\Lambda(\rho)/\rho+g/(\mathfrak{h}'(\rho)\rho^2)
=\tfrac{g(\gamma-2)}{a\gamma}\rho^{-\gamma}+\tfrac g{a\gamma}\rho^{-\gamma}
=\tfrac{g(\gamma-1)}{a\gamma}\rho^{-\gamma}$.
\end{proof}

\begin{proposition}[Algebraic absorption criterion for mechanism (a), polytropic law]
\label{prop:mechanism-a-closure}
Under Assumption \ref{ass:linear-viscosity}, for \eqref{eq:polytropic},
every $\kappa\in(0,1]$, and $\nu_2(\rho)=\bar\nu_2\rho^{3\gamma-2}$ with
$\bar\nu_2\le a^3\gamma^3/(2\kappa\bar\nu_1g^2(\gamma-1)^2)$, mechanism (a)
of \eqref{eq:kappa-entropy} satisfies, for every regular solution for
which the integrals below are finite,
\begin{equation}
\label{eq:mechanism-a-absorbed}
\Bigl|-2\kappa\bar\nu_1\int_\Omega\nu_2(\rho)\mathcal{N}(\rho)\,\partial_y\bm{u}\cdot\nabla_x\rho\,dx\,dy\Bigr|
\le\tfrac12\int_\Omega\nu_2(\rho)|\partial_y\bm{u}|^2\,dx\,dy+\kappa\bar\nu_1a\gamma\int_\Omega\rho^{\gamma-2}|\nabla_x\rho|^2\,dx\,dy,
\end{equation}
so that the right-hand side of \eqref{eq:kappa-entropy} reduces to
mechanism (b) alone, at the cost of half of each of the two
coercive terms it consumes. This is an algebraic absorption of one term
by two others already present in \eqref{eq:kappa-entropy}, valid
wherever the identity itself is; it is not, on its own, an entropy
estimate uniform over a sequence of solutions, which would additionally
require $\nu_2(\rho)=\bar\nu_2\rho^{3\gamma-2}\in
L^1((0,T)\times\Omega)$ as in Definition \ref{def:weak-solution}(ii) --
an integrability question not examined here (see the discussion
following the proof).
\end{proposition}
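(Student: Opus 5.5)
The plan is a pointwise weighted Young inequality applied to the integrand of mechanism (a), with the weights chosen so that the exponents of $\rho$ match exactly. By Proposition \ref{prop:polytropic-Lambda-N}, $\mathcal{N}(\rho)=\frac{g(\gamma-1)}{a\gamma}\rho^{-\gamma}$, and this function is positive for $\gamma>1$. So the integrand is bounded in absolute value by
\[
2\kappa\bar\nu_1\,\nu_2(\rho)\,\mathcal{N}(\rho)\,|\partial_y\bm{u}|\,|\nabla_x\rho|.
\]
I would split it as $\sqrt{\nu_2}|\partial_y\bm{u}|$ times $2\kappa\bar\nu_1\sqrt{\nu_2}\,\mathcal{N}|\nabla_x\rho|$. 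Young's inequality $AB\le\frac12A^2+\frac12B^2$ then bounds it pointwise by
\[
\tfrac12\nu_2(\rho)|\partial_y\bm{u}|^2+2\kappa^2\bar\nu_1^2\,\nu_2(\rho)\mathcal{N}(\rho)^2|\nabla_x\rho|^2 .
\]

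The key step is to compute the weight in the second term. With $\nu_2=\bar\nu_2\rho^{3\gamma-2}$ and $\mathcal{N}^2=\frac{g^2(\gamma-1)^2}{a^2\gamma^2}\rho^{-2\gamma}$, we get
\[
\nu_2\mathcal{N}^2=\bar\nu_2\,\frac{g^2(\gamma-1)^2}{a^2\gamma^2}\,\rho^{\gamma-2}.
\]
This carries exactly the power $\rho^{\gamma-2}$ that appears in $\mathfrak h'(\rho)=a\gamma\rho^{\gamma-2}$, and this is what dictates the choice of the exponent $3\gamma-2$. The requirement
\[
2\kappa^2\bar\nu_1^2\,\nu_2\mathcal{N}^2\le\kappa\bar\nu_1 a\gamma\rho^{\gamma-2}
\]
then reduces, after cancelling $\kappa\bar\nu_1\rho^{\gamma-2}>0$, to
\[
2\kappa\bar\nu_1\bar\nu_2 g^2(\gamma-1)^2\le a^3\gamma^3 .
\]
This is precisely the stated bound on $\bar\nu_2$.

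Integrating the pointwise inequality over $\Omega$ gives \eqref{eq:mechanism-a-absorbed}; the integrals are finite by hypothesis. Finally, I would note that $\kappa\bar\nu_1 a\gamma\int_\Omega\rho^{\gamma-2}|\nabla_x\rho|^2$ is exactly half of the gradient term $2\kappa\bar\nu_1\int_\Omega\mathfrak h'(\rho)|\nabla_x\rho|^2$ on the left of \eqref{eq:kappa-entropy}, again using $\mathfrak h'=a\gamma\rho^{\gamma-2}$ from \eqref{eq:polytropic-potential}. Likewise $\frac12\int_\Omega\nu_2|\partial_y\bm{u}|^2$ is half of the vertical dissipation. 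Both halves can therefore be moved to the left-hand side, and only mechanism (b) remains on the right.

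There is no real obstacle here. The only point needing care is the exponent bookkeeping, namely checking that $\nu_2\mathcal{N}^2$ reproduces $\rho^{\gamma-2}$ with no stray powers of $\rho$, together with the constant tracking.
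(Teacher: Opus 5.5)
Your proposal is correct and is essentially the paper's proof. You apply Young's inequality with weight $\tfrac12$ to get $\tfrac12\int_\Omega\nu_2(\rho)|\partial_y\bm{u}|^2+2\kappa^2\bar\nu_1^2\int_\Omega\nu_2(\rho)\mathcal{N}(\rho)^2|\nabla_x\rho|^2$, observe that $\nu_2=\bar\nu_2\rho^{3\gamma-2}$ makes $\nu_2\mathcal{N}^2/\rho^{\gamma-2}$ constant, and read the threshold on $\bar\nu_2$ off the comparison with $\kappa\bar\nu_1a\gamma$; this is exactly the paper's argument, and your constant tracking and identification of the two consumed halves of the coercive terms in \eqref{eq:kappa-entropy} agree with it.
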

\begin{proof}
By Young's inequality with parameter $\tfrac12$, the left side is
bounded by
\[
\tfrac12\int_\Omega\nu_2(\rho)|\partial_y\bm{u}|^2\,dx\,dy
+2\kappa^2\bar\nu_1^2\int_\Omega\nu_2(\rho)\mathcal{N}(\rho)^2|\nabla_x\rho|^2\,dx\,dy.
\]
By Proposition \ref{prop:polytropic-Lambda-N}, $\mathcal{N}(\rho)^2=g^2(\gamma-1)^2\rho^{-2\gamma}/(a\gamma)^2$, so
\[
\frac{\nu_2(\rho)\mathcal{N}(\rho)^2}{\rho^{\gamma-2}}
=\frac{\bar\nu_2g^2(\gamma-1)^2}{(a\gamma)^2}\,\rho^{(3\gamma-2)-2\gamma-(\gamma-2)}
=\frac{\bar\nu_2g^2(\gamma-1)^2}{(a\gamma)^2},
\]
independent of $\rho$ -- the unique exponent $3\gamma-2$ with this
property, valid uniformly near vacuum and at large density, in the
absence of any $L^\infty$ bound on $\rho$. The stated threshold on
$\bar\nu_2$ makes
\[
2\kappa^2\bar\nu_1^2\bar\nu_2g^2(\gamma-1)^2/(a\gamma)^2\le\kappa\bar\nu_1a\gamma.
\]
\end{proof}

The choice $\nu_2(\rho)=\bar\nu_2\rho^{3\gamma-2}$ degenerates faster at
vacuum than the linear $\nu_1$ of Assumption
\ref{ass:linear-viscosity} (since $3\gamma-2>1$ for $\gamma>1$), but the
principal integrability difficulty it raises, within the present weak
class, is not vacuum degeneracy but growth at large density:
$3\gamma-2>\gamma$ for every $\gamma>1$, whereas the basic polytropic
energy of Definition \ref{def:total-energy} controls only $\rho^\gamma$,
not $\rho^{3\gamma-2}$, in $L^1$. Whether $\nu_2(\rho)\in
L^1((0,T)\times\Omega)$, as required by Definition
\ref{def:weak-solution}(ii), holds under some additional a priori bound
is not examined here. As $\gamma\to1^+$ or
as $\kappa\to0^+$, the threshold on $\bar\nu_2$ diverges and the
constraint disappears entirely: the former is consistent with
$\mathcal{N}(\rho)\propto(\gamma-1)$ (Proposition \ref{prop:polytropic-Lambda-N})
vanishing at the isothermal law, where no absorption is needed since
mechanism (a) is then identically zero for every choice of $\nu_2$
(Remark \ref{rem:isothermal-dichotomy}); the latter simply reflects
that mechanism (a) carries an overall factor $\kappa$ and vanishes at
$\kappa=0$ together with every other term on the right of
\eqref{eq:kappa-entropy} (Remark \ref{rem:mechanism-b-open}). What
Proposition \ref{prop:mechanism-a-closure} isolates, at every fixed
$\kappa\in(0,1]$, is that mechanism (b) -- and in particular
its $\nabla_xv$-term (Remark
\ref{rem:mechanism-b-open}) -- remains unclosed within the direct
augmented-velocity route developed here, for the polytropic law.

\begin{remark}[A distinguished value, within the general classification]
\label{rem:gamma-2}
Within the polytropic family, $\gamma=2$ plays a distinguished role
for $\Lambda$: one has $\Lambda\equiv0$ if and only if $\gamma=2$,
whereas $\mathcal{N}$ does not vanish identically for any $\gamma>1$. This is the polytropic instance of the general
classification of Theorem \ref{thm:Lambda-N-classification}, which
identifies $p(\rho)=a\rho^2$ (i.e.\ $\gamma=2$) as the unique barotropic
law, among \emph{all} those satisfying Assumption \ref{ass:pressure} and
not merely within the polytropic family, annihilating $\Lambda$. Unlike
the enthalpy formula \eqref{eq:polytropic-potential},
which diverges as $\gamma\to1^+$ for every fixed $\rho$ (the prefactor
$a\gamma/(\gamma-1)$, tied to the choice $\rho_\star=0$, blows up) and
therefore admits no isothermal limit, the formulas
\eqref{eq:polytropic-Lambda-N} for $\Lambda,\mathcal{N}$ depend on $\mathfrak{h}$ only
through $\mathfrak{h}',\mathfrak{h}''$ and remain perfectly regular at $\gamma=1$. The
resulting isothermal structural functions are computed independently,
and shown to agree exactly with the formal substitution $\gamma=1$ in
\eqref{eq:polytropic-Lambda-N}, in \S\ref{subsec:isothermal} below
(Proposition \ref{prop:isothermal-Lambda-N}), where $\mathcal{N}\equiv0$
identically -- the polytropic instance of the other half of Theorem
\ref{thm:Lambda-N-classification}; see Remark
\ref{rem:isothermal-dichotomy} there for this consistency check.
\end{remark}

\subsection{The isothermal case}
\label{subsec:isothermal}

We now treat the isothermal law
\begin{equation}
\label{eq:isothermal}
p(\rho)=c^2\rho,\qquad c>0,
\end{equation}
which satisfies Assumption \ref{ass:pressure}. Physically, $c$ is the
constant isothermal speed of sound: for an ideal gas at fixed absolute
temperature $\mathcal T_0$, the equation of state $p=\rho R\mathcal
T_0$ gives exactly \eqref{eq:isothermal} with $c^2=R\mathcal T_0$, $R$
the specific gas constant -- a physical identification used only for
interpretation and nowhere required by the analysis below. It is worth stressing
that this is not a law external to \S\ref{subsec:polytropic}: setting
$\gamma=1$ and $a=c^2$ in \eqref{eq:polytropic} reproduces
\eqref{eq:isothermal} exactly, so the isothermal law is itself the
particular case $\gamma=1$ of the polytropic family -- its classical
boundary member, corresponding to $n\to\infty$ in the polytropic-index
convention $\gamma=1+1/n$. What forces a separate treatment here is not
the pressure law but the construction underlying
\eqref{eq:polytropic-potential}: the choice $\rho_\star=0$ used there
for $\gamma>1$ is not admissible at $\gamma=1$, since $\varrho\mapsto
p(\varrho)/\varrho^2=c^2/\varrho$ fails to be integrable at $\varrho=0$; no reference density
$\rho_\star=0$ exists in Definition \ref{def:pressure-potential} for
the isothermal law, and a strictly positive $\rho_\star$ must be fixed
instead, requiring the independent computation below. The structural
functions $\Lambda,\mathcal{N}$, however, depend on $\mathfrak{h}$ only through $\mathfrak{h}',\mathfrak{h}''$ and
are insensitive to the choice of $\rho_\star$; as Remark
\ref{rem:isothermal-dichotomy} makes precise, they extend smoothly
across $\gamma=1$, unlike the enthalpy itself.

\begin{proposition}[Isothermal potential and enthalpy]
\label{prop:isothermal-potential}
\label{rem:isothermal-coercive}
With $\rho_\star=\mathrm{e}$ (Euler's number, typeset upright throughout
this proposition to distinguish it from the pressure potential $e(\rho)$)
in Definition \ref{def:pressure-potential},
\begin{equation}
\label{eq:isothermal-potential-family}
e(\rho)=c^2\rho(\log\rho-1),\qquad \mathfrak{h}(\rho)=c^2\log\rho.
\end{equation}
The isothermal weak-solution literature builds its energy estimate not
around $e$, but around $e$ harmonized relative to the background
density $\bar\rho=1$ (Lemma \ref{lem:relative-coercive}):
\begin{equation}
\label{eq:isothermal-potential}
e_1(\rho)=c^2\bigl(\rho\log\rho-\rho+1\bigr),
\qquad \mathfrak{h}(\rho)=e_1'(\rho)=c^2\log\rho.
\end{equation}
This is exactly the normalization of Wang, Dou and Jiu
\cite{WangDouJiu} (their internal-energy density $\xi\ln\xi-\xi+1$ in
the energy inequality of their eq.\ (2.6), stated in the
units $g=c^2=1$) and, more generally, of the Bresch--Desjardins entropy
literature. By Lemma \ref{lem:relative-coercive}, $e_1(\rho)\ge0$ for
every $\rho>0$, with equality iff $\rho=1$, whereas
$e(\rho)=e_1(\rho)-c^2$ vanishes only at $\rho=\mathrm{e}\approx2.718$ and
is negative on $(0,\mathrm{e})$ -- in particular at the background density
$\rho=1$, where $e(1)=-c^2$. This is exactly the coercivity underlying
the admissible initial-data class of \cite{WangDouJiu} (their eq.\
(2.4): $\xi_0\ln\xi_0-\xi_0+1\in L^1(\Omega)$), and it is the harmonized
energy $\widehat{\mathcal E}$ of \eqref{eq:harmonized-energy}, not the
bare $\mathcal E$ of Definition
\ref{def:total-energy}, that carries this coercivity into the total
energy; no such correction is needed in the polytropic case of
\S\ref{subsec:polytropic}, where $\rho_\star=0$ already gives
$e(\rho)=a\rho^\gamma/(\gamma-1)\ge0$, vanishing at the vacuum
$\rho=0$. We adopt $e_1$, not $e$, for the remainder of this
subsection.
\end{proposition}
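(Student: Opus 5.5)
The plan is a direct computation from Definition \ref{def:pressure-potential}, followed by an appeal to Lemma \ref{lem:relative-coercive} for the harmonized potential.

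\emph{Step 1: the bare potential.} With $p(\varrho)=c^2\varrho$, the integrand in \eqref{eq:pressure-potential} is $p(\varrho)/\varrho^2=c^2/\varrho$. This is integrable on every compact subinterval of $(0,\infty)$, so the reference density $\rho_\star=\mathrm e$ is admissible. Then
\[
e(\rho)=c^2\rho\int_{\mathrm e}^\rho\frac{d\varrho}\varrho=c^2\rho(\log\rho-1),
\]
and differentiating gives $\mathfrak h(\rho)=e'(\rho)=c^2(\log\rho-1)+c^2=c^2\log\rho$. As consistency checks against \eqref{eq:e-identities}, I will verify $\rho e'-e=c^2\rho=p(\rho)$ and $e''=c^2/\rho=p'(\rho)/\rho$.

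\emph{Step 2: harmonization at $\bar\rho=1$.} Substituting into \eqref{eq:relative-potential} requires $e(1)=-c^2$ and $\mathfrak h(1)=0$. These give
\[
e_1(\rho)=c^2\rho(\log\rho-1)+c^2=c^2(\rho\log\rho-\rho+1).
\]
Its derivative is $e_1'=c^2\log\rho$, which is $\mathfrak h$ unchanged, as expected since $e_1-e$ is the constant $c^2$. Nonnegativity of $e_1$, with equality iff $\rho=1$, then follows directly from Lemma \ref{lem:relative-coercive}, since Assumption \ref{ass:pressure} holds.

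\emph{Step 3: sign of $e$.} From $e(\rho)=e_1(\rho)-c^2=c^2\rho(\log\rho-1)$, the sign of $e$ for $\rho>0$ is the sign of $\log\rho-1$. So $e$ vanishes exactly at $\rho=\mathrm e$ and is negative on $(0,\mathrm e)$; in particular $e(1)=-c^2$. The comparison with the normalization of \cite{WangDouJiu} is bibliographic: setting $c^2=1$ turns $e_1$ into $\xi\ln\xi-\xi+1$.

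No step is a genuine obstacle, since everything is a direct evaluation. The only point needing care is admissibility of $\rho_\star$: the integrand $c^2/\varrho$ is not integrable at $0$, so $\rho_\star=0$ must be excluded and some positive reference chosen. The choice $\rho_\star=\mathrm e$ is what makes $\mathfrak h=c^2\log\rho$ carry no additive constant.
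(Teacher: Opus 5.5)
Your proposal is correct and follows essentially the same route as the paper: direct evaluation of \eqref{eq:pressure-potential} at $\rho_\star=\mathrm{e}$, the consistency checks against \eqref{eq:e-identities}, and harmonization at $\bar\rho=1$ via Lemma~\ref{lem:relative-coercive} using $e(1)=-c^2$ and $\mathfrak h(1)=0$. You also spell out two things the paper leaves implicit: the sign analysis of $e$ through $\log\rho-1$, and the fact that $\rho_\star=\mathrm{e}$ is exactly the choice that removes the additive constant $c^2(1-\log\rho_\star)$ from $\mathfrak h$.
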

\begin{proof}
By Definition \ref{def:pressure-potential} with $\rho_\star=\mathrm{e}$,
\[
e(\rho)=\rho\int_{\mathrm{e}}^\rho \frac{c^2}\varrho\,d\varrho=c^2\rho(\log\rho-1),
\]
so $e'(\rho)=c^2\log\rho+c^2-c^2=c^2\log\rho=\mathfrak{h}(\rho)$; then
\[
\rho e'(\rho)-e(\rho)=c^2\rho\log\rho-c^2\rho(\log\rho-1)=c^2\rho=p(\rho),
\qquad e''(\rho)=c^2/\rho=p'(\rho)/\rho,
\]
consistently with \eqref{eq:e-identities}. Since $\mathfrak{h}(1)=c^2\log1=0$ and
$e(1)=-c^2$, Lemma \ref{lem:relative-coercive} at $\bar\rho=1$ gives
\[
e_1(\rho)=e(\rho)-e(1)-\mathfrak{h}(1)(\rho-1)=e(\rho)+c^2
=c^2\rho(\log\rho-1)+c^2=c^2(\rho\log\rho-\rho+1),
\]
and, by Lemma \ref{lem:relative-coercive}, $e_1'=\mathfrak{h}$: the enthalpy, and
hence every statement of \S\ref{sec:hydrostatic-integral}, Proposition
\ref{rem:vacuum}, and Remark \ref{rem:gamma-2} above phrased through
$\mathfrak{h},\mathfrak{h}',\mathfrak{h}''$ alone, is unaffected by the choice between $e$ and $e_1$.
\end{proof}

\begin{proposition}[Isothermal hydrostatic profile]
\label{prop:isothermal-profile}
Every regular, strictly positive density satisfying
\eqref{eq:intro-hydrostatic} under \eqref{eq:isothermal} satisfies
\begin{equation}
\label{eq:isothermal-profile}
\rho(t,x,y)=\exp\!\left(\frac{\mathfrak{M}(t,x)-\Phi(y)}{c^2}\right)
\end{equation}
everywhere on $\Omega$, for every value of $\mathfrak{M}(t,x)-\Phi(y)\in\mathbb R$:
unlike the polytropic profile \eqref{eq:polytropic-profile}, this
expression is never singular and $\rho>0$ identically, since
$\mathfrak{h}(\rho)=c^2\log\rho$ is a bijection of $(0,\infty)$ onto $\mathbb R$.
For a regular, strictly positive isothermal hydrostatic profile
represented by a finite Montgomery potential, vacuum is therefore
structurally excluded at any finite vertical level. This says nothing
about weak solutions: whole-column vacuum remains admissible in the
distributional formulation \eqref{eq:HB-weak} of Definition
\ref{def:weak-solution}, as Proposition
\ref{prop:wdj-hydrostatic-transfer} shows directly, with no appeal to
the pointwise formula above.
\end{proposition}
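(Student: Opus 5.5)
The plan is to read the profile off the first hydrostatic integral of Proposition \ref{lem:first-integral}, using the explicit isothermal enthalpy of Proposition \ref{prop:isothermal-potential}.

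First, since the solution is regular and strictly positive and satisfies \eqref{eq:intro-hydrostatic}, Proposition \ref{lem:first-integral} applies. It gives a function $\mathfrak M(t,x)$, independent of $y$, with $\mathfrak h(\rho(t,x,y))+\Phi(y)=\mathfrak M(t,x)$ at every point of $\Omega$. Second, at the isothermal law with the normalization $\rho_\star=\mathrm e$, we have $\mathfrak h(\rho)=c^2\log\rho$ by \eqref{eq:isothermal-potential-family}. As recorded in the proof of Proposition \ref{prop:isothermal-potential}, this is the same whether one uses $e$ or $e_1$. Substituting gives $c^2\log\rho=\mathfrak M-\Phi$, and dividing by $c^2>0$ and exponentiating yields \eqref{eq:isothermal-profile} pointwise on $\Omega$.

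For the qualitative claims, I would note that $\log:(0,\infty)\to\mathbb R$ is a bijection, so $\mathfrak h$ is one as well. Hence for every real value of $\mathfrak M(t,x)-\Phi(y)$ the right side of \eqref{eq:isothermal-profile} is finite and strictly positive. The exponential is smooth and never vanishes, so no finite value of $\mathfrak M-\Phi(y)$ corresponds to $\rho=0$. This is the sense in which vacuum is excluded at every finite vertical level, in contrast with \eqref{eq:polytropic-profile}, where $\mathfrak h$ only maps onto $(0,\infty)$. I would add that $\Phi(y)=gy$ is bounded on $[0,H]$. So on a column with finite $\mathfrak M(t,x)$, the density is bounded above and below by $\exp((\mathfrak M-gH)/c^2)>0$ and $\exp(\mathfrak M/c^2)$.

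The final sentence about weak solutions makes no claim to be proved here. It is a pointer to Proposition \ref{prop:wdj-hydrostatic-transfer} and to Remark \ref{rem:hydrostatic-weak}, which note that \eqref{eq:HB-weak} holds trivially on $\{\rho=0\}$ because $p(0)=0$. I would only remark that the argument above uses regularity and strict positivity, which are not available for weak solutions.

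There is no real obstacle. The only point needing care is to state explicitly that the normalization $\rho_\star=\mathrm e$ is used; a different reference density would multiply the right side of \eqref{eq:isothermal-profile} by a positive constant, as discussed after Definition \ref{def:hydrostatic-potential}.
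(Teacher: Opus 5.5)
Your proposal is correct and is essentially the paper's proof: apply the first integral \eqref{eq:first-integral} with $\mathfrak h(\rho)=c^2\log\rho$ (normalization $\rho_\star=\mathrm{e}$) and exponentiate; the qualitative claims then follow from the bijectivity of $\log$. Your extra column bounds are right, but the wording lists them in reverse order: $\exp((\mathfrak M-gH)/c^2)$ is the lower bound and $\exp(\mathfrak M/c^2)$ the upper one.
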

\begin{proof}
By \eqref{eq:first-integral}, $c^2\log\rho=\mathfrak{M}-\Phi(y)$; exponentiate.
\end{proof}

By the same elementary computation, since $p'(\rho)=c^2$ and
$\nabla_x\sqrt\rho=\tfrac12\rho^{-1/2}\nabla_x\rho$, so that
$|\nabla_x\sqrt\rho|^2=\tfrac14\rho^{-1}|\nabla_x\rho|^2$,
\begin{equation}
\label{eq:isothermal-pressure-dissipation}
\frac{p'(\rho)}\rho|\nabla_x\rho|^2=4c^2\bigl|\nabla_x\sqrt\rho\bigr|^2:
\end{equation}
this is exactly the formal $\gamma=1$ instance of
\eqref{eq:pressure-dissipation}, and is the rewriting used in the
Bresch--Desjardins entropy for the isothermal compressible
Navier--Stokes and primitive equations.

\begin{proposition}[Isothermal structural functions]
\label{prop:isothermal-Lambda-N}
For \eqref{eq:isothermal},
\begin{equation}
\label{eq:isothermal-Lambda-N}
\Lambda(\rho)=-\frac g{c^2}\quad(\text{a nonzero constant}),\qquad
\mathcal{N}(\rho)\equiv0.
\end{equation}
\end{proposition}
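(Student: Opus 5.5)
The plan is a direct substitution into Definition \ref{def:Lambda-N}, using the enthalpy of Proposition \ref{prop:isothermal-potential}. By \eqref{eq:isothermal-potential-family}, $\mathfrak{h}(\rho)=c^2\log\rho$. Hence
\[
\mathfrak{h}'(\rho)=\frac{p'(\rho)}{\rho}=\frac{c^2}{\rho},
\qquad
\mathfrak{h}''(\rho)=-\frac{c^2}{\rho^2}.
\]
These derivatives are independent of the reference density $\rho_\star$, so the choice between $e$ and $e_1$ is immaterial, as already noted in that proposition's proof.

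Next I substitute into $\Lambda=g\mathfrak{h}''/(\mathfrak{h}')^2$:
\[
\Lambda(\rho)=g\cdot\Bigl(-\frac{c^2}{\rho^2}\Bigr)\cdot\frac{\rho^2}{c^4}=-\frac{g}{c^2}.
\]
This is a constant, and it is nonzero because $g,c>0$. For $\mathcal{N}$, I compute the two summands separately:
\[
\frac{\Lambda(\rho)}{\rho}=-\frac{g}{c^2\rho},
\qquad
\frac{g}{\mathfrak{h}'(\rho)\rho^2}=\frac{g\rho}{c^2\rho^2}=\frac{g}{c^2\rho}.
\]
They cancel exactly, giving $\mathcal{N}\equiv0$.

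As consistency checks, the result agrees with the converse direction of Theorem \ref{thm:Lambda-N-classification}(ii). It also matches the case $a_0=0$, $a_1=1/c^2$ of Proposition \ref{prop:theta-lambda-const}, which gives $\Lambda=-ga_1=-g/c^2$. Finally, it equals the formal substitution $\gamma=1$, $a=c^2$ in \eqref{eq:polytropic-Lambda-N}, where the factor $\gamma-2=-1$ gives $\Lambda=-g/c^2$ and the factor $\gamma-1=0$ kills $\mathcal{N}$.

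There is no real obstacle here. The only point needing care is that the computation uses only $\mathfrak{h}'$ and $\mathfrak{h}''$, so it does not depend on the non-integrability at $0$ that forced $\rho_\star>0$ in the isothermal setting.
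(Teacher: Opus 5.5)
Your proposal is correct and follows the paper's proof exactly: you compute $\mathfrak{h}'(\rho)=c^2/\rho$ and $\mathfrak{h}''(\rho)=-c^2/\rho^2$, substitute into Definition \ref{def:Lambda-N} to get $\Lambda=-g/c^2$, and then show the two summands of $\mathcal{N}$ cancel. Your consistency checks are all accurate and are extras. They match Theorem \ref{thm:Lambda-N-classification}(ii), Proposition \ref{prop:theta-lambda-const} with $a_0=0$, and the substitution $\gamma=1$ carried out in Remark \ref{rem:isothermal-dichotomy}.
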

\begin{proof}
By Proposition \ref{prop:isothermal-potential}, $\mathfrak{h}'(\rho)=c^2/\rho$,
$\mathfrak{h}''(\rho)=-c^2/\rho^2$, so
$\Lambda(\rho)=g(-c^2/\rho^2)/(c^2/\rho)^2=-g/c^2$. For $\mathcal{N}$,
$\Lambda(\rho)/\rho+g/(\mathfrak{h}'(\rho)\rho^2)=-g/(c^2\rho)+g/(c^2\rho)=0$.
\end{proof}

\begin{remark}[The other boundary of the dichotomy]
\label{rem:isothermal-dichotomy}
Substituting $\gamma=1$ into \eqref{eq:polytropic-Lambda-N} gives
\[
\Lambda=\frac{g(1-2)}a\rho^0=-\frac ga,
\qquad
\mathcal{N}=\frac{g(1-1)}a\rho^{-1}=0,
\]
which agree with \eqref{eq:isothermal-Lambda-N} at $a=c^2$: unlike the
enthalpy formula \eqref{eq:polytropic-potential}, which diverges at
$\gamma=1$ (Remark \ref{rem:gamma-2}), $\Lambda$ and $\mathcal{N}$ extend
smoothly across it. Proposition \ref{prop:isothermal-Lambda-N} is thus
both an independent computation and a consistency check. This confirms
the dichotomy of Remark \ref{rem:gamma-2}: $\gamma=2$ annihilates
$\Lambda$, $\gamma=1$ annihilates $\mathcal{N}$, and no member of the polytropic
family annihilates both -- a special case of Theorem
\ref{thm:Lambda-N-classification}, valid for every barotropic law.
Here $\mathcal{N}$ vanishes identically, the strongest form of the commutation
identity of Lemma \ref{lem:N}, so mechanism (a) of
\eqref{eq:kappa-entropy} vanishes for every choice of $\nu_2$, with no
exponent constraint needed; mechanism (b), whose $\nabla_xv$-term does
not involve $\mathcal{N}$, retains it unconditionally. This settles the question
of \emph{why} the multiplicative Ersoy--Ngom--Sy factorization
underlying \cite{WangDouJiu} is available precisely at this law:
$\mathcal{N}\equiv0$ is equivalent, by Proposition
\ref{lem:Lambda-N-geometric}, to $\log\rho$ being affine in $y$, i.e.\
$\rho(t,x,y)=\xi(t,x)\,e^{-y/H_s}$ -- exactly the factorization the
change of variables uses -- and Theorem
\ref{thm:Lambda-N-classification} shows this occurs only at the
isothermal law, among every barotropic pressure law. It is this class
of multiplicative, exponential-profile factorizations that is
classified here, not every conceivable change of variables;
\S\ref{subsec:wdj-transfer} draws the consequence for the
Wang--Dou--Jiu existence theorem.
\end{remark}

\subsection{Application to the Wang--Dou--Jiu isothermal model}
\label{subsec:wdj-transfer}

In this subsection only, we specialize the preceding structural
framework to the Wang--Dou--Jiu isothermal model and therefore restate
explicitly, below and in Assumption \ref{ass:wdj-vertical-viscosity},
the modified viscosity and damping under which this application is
carried out: an additional drag term (\eqref{eq:CPE-damped}) and a
vertical viscosity $\nu_2=\nu_2(\rho,y)$ depending explicitly on the
vertical coordinate rather than on $\rho$ alone. Theorems
\ref{thm:total-energy} and \ref{thm:kappa-entropy} themselves are
unaffected (Remark \ref{rem:wdj-viscosity-divergence-form}); it is only
this one subsection's specific choices that depart from the standing
Assumptions \ref{ass:pressure}--\ref{ass:viscosities} of
Section~\ref{sec:model}. This subsection carries to its conclusion the
chain running from the hydrostatic balance through the Montgomery
potential and the isothermal criterion $\mathcal{N}\equiv0$ to the
Ersoy--Ngom--Sy transform, the transformed Wang--Dou--Jiu weak
solution, and back to the original variables. At the isothermal law,
the vertical constancy of $\mathfrak M$ forces $\mathcal{N}\equiv0$ (Proposition
\ref{prop:isothermal-Lambda-N}), hence the vertical density profile to
be a pure exponential in $y$ (Proposition \ref{lem:Lambda-N-geometric}) --
exactly the factorization the Ersoy--Ngom--Sy change of variables
performs, and which Proposition \ref{prop:ens-montgomery} identifies
with $\mathfrak M$ itself, written multiplicatively (Remark
\ref{rem:classification-geometric}). Built on that change of
variables, the existence theorem of Wang, Dou and Jiu
\cite{WangDouJiu} states the corresponding result for the original
variables -- the last link of the chain: the return from the reduced
system back to the original hydrostatic equations -- while the
accompanying argument focuses primarily on the transfer of the
relevant regularity bounds. The contribution of Theorem
\ref{thm:wdj-transfer} below is to make this pull-back explicit at the
level of the distributional equations, term by term, via the transfer
identities generated directly by the change of variables (Lemma
\ref{lem:wdj-transfer-identities}), together with the corresponding
regularity and energy properties.

This transfer is not a formality. The change of variables combines a
nonlinear vertical reparametrization $z=\zeta(y)$, depending only on
$y$, $g$, and $c^2$, a multiplicative density renormalization
$\rho=\xi\mathcal J$, and a specifically $y$-dependent vertical
viscosity profile \eqref{eq:wdj-viscosity-choice} -- it is the density
renormalization, not the coordinate change itself, that depends on
$\rho$ -- so that mass, momentum, pressure,
damping, and both viscous fluxes each carry, before simplification, a
different power of the Jacobian $\mathcal J$. The content of Theorem
\ref{thm:wdj-transfer} is that every one of these powers cancels
exactly, term by term, with no residual correction anywhere in the weak
formulation -- a fact that has to be checked for each term separately
and holds for none of them for an arbitrary $y$-dependent viscosity
choice. This is what makes the theorem, and not only the constituent
identities of Lemma \ref{lem:wdj-transfer-identities} it is built from,
the object that closes this chain: a
complete, self-contained distributional statement, proved independently
of the approximation scheme of \cite{WangDouJiu}, rather than a
restatement of their results in different notation. Throughout this
subsection
$p(\rho)=c^2\rho$, with
$g,c^2>0$ kept general -- \cite{WangDouJiu} works in the normalized
units $g=c^2=1$ (so that in particular $H_s=1$), to which every
displayed formula below reduces upon setting $g=c^2=1$. We also adopt,
only in this subsection, the damped momentum equation
\begin{equation}
\label{eq:CPE-damped}
\partial_t(\rho \bm{u})+\operatorname{div}_x(\rho \bm{u}\otimes \bm{u})+\partial_y(\rho v\bm{u})
+\nabla_xp(\rho)+r\rho|\bm{u}|\bm{u}
=2\operatorname{div}_x(\nu_1(\rho)D_x\bm{u})+\partial_y(\nu_2(\rho)\partial_y\bm{u}),
\end{equation}
$r>0$ a constant, in place of the momentum equation of \eqref{eq:CPE} --
exactly system (1.1) of \cite{WangDouJiu}. This is a genuine (if
elementary) extension of the model of Section \ref{sec:model}, flagged
explicitly: it is not covered by Assumptions
\ref{ass:pressure}--\ref{ass:viscosities} as stated.

\begin{remark}[The damping term is easily absorbed]
\label{rem:wdj-damping-extension}
Testing \eqref{eq:CPE-damped} against $\bm{u}$ produces, in addition to the
terms of Lemma \ref{lem:kinetic}, the single extra term
$\int_\Omega r\rho|\bm{u}|\bm{u}\cdot \bm{u}\,dx\,dy=\int_\Omega r\rho|\bm{u}|^3\,dx\,dy\ge0$; no other term
of the proof of Lemma \ref{lem:kinetic}, Proposition
\ref{prop:integrated-balance}, or Theorem \ref{thm:total-energy} is
affected, since none of them involves the momentum equation's inertial
or pressure terms in a way that interacts with an algebraic zeroth-order
term in $\bm{u}$. Consequently Theorem \ref{thm:total-energy} extends to
\eqref{eq:CPE-damped} verbatim, except for the addition of
$r\int_\Omega\rho|\bm{u}|^3\,dx\,dy$ to $\mathcal D(t)$. The same is not verified here
for Theorem \ref{thm:kappa-entropy}: testing \eqref{eq:CPE-damped}
against the augmented velocity $U=\bm{u}+2\kappa\bar\nu_1\nabla_x\log\rho$
produces, for every $\kappa\in(0,1]$, an additional cross term
$\int_\Omega r\rho|\bm{u}|\bm{u}\cdot2\kappa\bar\nu_1\nabla_x\log\rho\,dx\,dy$ whose sign and
integrability are not examined in this note; see Remark
\ref{rem:wdj-cross-term-favorable}.
\end{remark}

Recall the change of variables of Ersoy, Ngom and Sy \cite{ErsoyNgomSy},
used by \cite{WangDouJiu}. It rests on the fact, recorded in Proposition
\ref{lem:Lambda-N-geometric} and specialized to the isothermal law by
Proposition \ref{prop:isothermal-Lambda-N}, that the vertical density
profile is a pure exponential in $y$; writing that exponential as
$\rho(t,x,y)=\rho(t,x,0)\,e^{-y/H_s}$ defines the stratification height
\begin{equation}
\label{eq:stratification-height}
H_s:=\frac{c^2}g,
\end{equation}
the vertical length scale over which the isothermal density decays by a
factor $\mathrm{e}$. Set
\begin{equation}
\label{eq:jacobian-def}
\mathcal J(y):=e^{-y/H_s},
\end{equation}
and $\zeta(y):=H_s\bigl(1-\mathcal J(y)\bigr)=H_s(1-e^{-y/H_s})$, a smooth
increasing bijection of $(0,H)$ onto $(0,h)$, $h:=H_s(1-e^{-H/H_s})$;
set $z:=\zeta(y)$. Since $\rho(t,x,y)=\rho(t,x,0)\mathcal J(y)$,
\[
\zeta(y)=\int_0^y\mathcal J(y')\,dy'=\Bigl(\int_0^y\rho(t,x,y')\,dy'\Bigr)\big/\rho(t,x,0):
\]
up to the local surface density, $\zeta$ is exactly the column mass
between $0$ and $y$ -- the same quantity underlying the mass- and
pressure-based vertical coordinates of geophysical fluid dynamics.
Then $\zeta'(y)=\mathcal J(y)$, and along $z=\zeta(y)$,
$\mathcal J=1-z/H_s$: $\mathcal J$ is at once the Jacobian of the change
of variables and an explicit affine function of $z$. Define
\begin{equation}
\label{eq:wdj-ens-transform}
\xi(t,x,z):=\frac{\rho(t,x,y)}{\mathcal J(y)},\qquad
w(t,x,z):=\mathcal J(y)\,v(t,x,y),\qquad z=\zeta(y),
\end{equation}
(we keep \cite{WangDouJiu}'s own notation $w$ for the transformed
vertical velocity; the Bresch--Desjardins augmented velocity of Section
\ref{sec:Lambda-N} is denoted $U$ throughout this note, so that no
clash arises here).

\begin{assumption}[Vertical viscosity in the isothermal transfer]
\label{ass:wdj-vertical-viscosity}
In this subsection only, the vertical viscosity is allowed to depend
explicitly on the vertical coordinate, $\nu_2=\nu_2(\rho,y)\ge0$,
rather than on $\rho$ alone as posited by Assumption
\ref{ass:viscosities}. Together with the horizontal viscosity profile
already fixed by Assumption \ref{ass:linear-viscosity}, we use the
specific choice
\begin{equation}
\label{eq:wdj-viscosity-choice}
\nu_1(\rho)=\bar\nu_1\rho,\qquad
\nu_2(\rho,y)=\bar\nu_2\rho\,e^{2y/H_s}=\frac{\bar\nu_2\rho}{\mathcal J(y)^2},
\qquad\bar\nu_1,\bar\nu_2>0\text{ constants}.
\end{equation}
\end{assumption}
Remark \ref{rem:wdj-viscosity-divergence-form} below records, once and
for this local hypothesis, exactly why Theorems \ref{thm:total-energy}
and \ref{thm:kappa-entropy} remain valid verbatim under it.

The variable $\xi$ just defined is not merely compatible with the
Montgomery potential: it \emph{is} the Montgomery potential, in
exponential form.

\begin{proposition}[The Ersoy--Ngom--Sy variable is the exponentiated
Montgomery potential]
\label{prop:ens-montgomery}
Under \eqref{eq:isothermal}, for every regular, strictly positive
density satisfying \eqref{eq:intro-hydrostatic},
\begin{equation}
\label{eq:ens-is-montgomery}
\xi(t,x,z)=\exp\!\left(\frac{\mathfrak M(t,x)}{c^2}\right),
\end{equation}
manifestly independent of $y$ -- equivalently, after the
reparametrization $z=\zeta(y)$, of $z$: this is exactly the identity
$\partial_z\xi=0$ recorded as the third equation of
\eqref{eq:wdj-transformed} below.
\end{proposition}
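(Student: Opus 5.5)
The identity follows by combining the isothermal profile of Proposition~\ref{prop:isothermal-profile} with the definition \eqref{eq:wdj-ens-transform} of $\xi$. The one point requiring care is the normalization of $\mathfrak h$, so we fix it first. Throughout this subsection $\mathfrak h(\rho)=c^2\log\rho$, which is the choice $\rho_\star=\mathrm e$ of Proposition~\ref{prop:isothermal-potential}. The harmonized potential $e_1$ has the same derivative $\mathfrak h$, so no additive constant enters $\mathfrak M$. With this choice, Proposition~\ref{lem:first-integral} gives $c^2\log\rho(t,x,y)+gy=\mathfrak M(t,x)$ at every point of $\Omega$.

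Exponentiating, as in Proposition~\ref{prop:isothermal-profile}, gives
\[
\rho(t,x,y)=\exp\!\bigl(\mathfrak M(t,x)/c^2\bigr)\exp\!\bigl(-gy/c^2\bigr).
\]
By \eqref{eq:stratification-height} and \eqref{eq:jacobian-def}, $g/c^2=1/H_s$, so the second factor is exactly $\mathcal J(y)$. Dividing by $\mathcal J(y)>0$ and using \eqref{eq:wdj-ens-transform}, we get $\xi(t,x,\zeta(y))=\rho/\mathcal J=\exp(\mathfrak M(t,x)/c^2)$ for every $y\in(0,H)$.

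Since $\zeta$ is a bijection of $(0,H)$ onto $(0,h)$, this identity holds at every $z\in(0,h)$. The right-hand side depends only on $(t,x)$, so $\partial_z\xi=0$. Equivalently, $\partial_z\xi=\partial_y\xi/\zeta'(y)$ with $\zeta'=\mathcal J\neq0$, so the statement in $z$ and the statement in $y$ say the same thing.

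As a cross-check, one can bypass the explicit profile. We have $\log\xi=\log\rho+y/H_s=(\mathfrak h(\rho)+\Phi(y))/c^2=\mathfrak M/c^2$ directly from Definition~\ref{def:hydrostatic-potential}. This version shows that the content is exactly Proposition~\ref{lem:first-integral} written multiplicatively.

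The only place an error could enter is the normalization. Under a different $\rho_\star$, $\mathfrak M$ shifts by a constant and \eqref{eq:ens-is-montgomery} picks up a constant multiplicative factor. This is why the statement must be tied to $\mathfrak h=c^2\log\rho$, as the paragraph after Definition~\ref{def:hydrostatic-potential} already notes. Beyond that, the argument is a direct substitution.
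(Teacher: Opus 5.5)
Your proof is correct and takes the same route as the paper. Both combine the first integral with $\mathfrak h(\rho)=c^2\log\rho$, exponentiate, identify $\exp(-\Phi(y)/c^2)=e^{-y/H_s}=\mathcal J(y)$, and divide by $\mathcal J$; your extra remarks on the choice of $\rho_\star$ and on the equivalence of $y$- and $z$-independence via $\zeta'=\mathcal J>0$ are accurate and match the paper's discussion after Definition~\ref{def:hydrostatic-potential}.
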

\begin{proof}
By Proposition \ref{lem:first-integral} and Proposition
\ref{prop:isothermal-potential}, $c^2\log\rho(t,x,y)+\Phi(y)=\mathfrak
M(t,x)$ for every $y$, so
\[
\rho(t,x,y)=\exp\!\left(\frac{\mathfrak M(t,x)-\Phi(y)}{c^2}\right)
=\exp\!\left(\frac{\mathfrak M(t,x)}{c^2}\right)\exp\!\left(-\frac{\Phi(y)}{c^2}\right).
\]
Since
$\Phi(y)=gy$ and $H_s=c^2/g$, $\exp(-\Phi(y)/c^2)=e^{-y/H_s}=\mathcal
J(y)$, so $\rho(t,x,y)=\exp(\mathfrak M(t,x)/c^2)\,\mathcal J(y)$; dividing
by $\mathcal J(y)$ gives \eqref{eq:ens-is-montgomery}, in which the
right side depends on $(t,x)$ alone.
\end{proof}

Definition \eqref{eq:wdj-ens-transform} divides $\rho$ by $\mathcal J(y)$
specifically, not by an arbitrary positive weight fitted to the
isothermal law from outside. For a general smooth $\varphi:(0,H)\to
(0,\infty)$ and every regular hydrostatic density, $\rho/\varphi(y)$ is
independent of $y$ if and only if $\varphi$ is proportional to
$\mathcal J$: since $\rho(t,x,y)=\rho(t,x,0)\,\mathcal J(y)$ by
Proposition \ref{prop:isothermal-profile}, $\rho(t,x,\cdot)/\varphi(\cdot)$
constant in $y$ forces $\mathcal J/\varphi$ constant, i.e.\
$\varphi=\text{const}\cdot\mathcal J$. The Ersoy--Ngom--Sy transform is,
in this precise sense, the unique multiplicative renormalization of the
density compatible with the vertical constancy of $\mathfrak M$, up to
the one remaining multiplicative constant fixed by the normalization
$\mathcal J(0)=1$ in \eqref{eq:jacobian-def} -- not one convenient
change of variables among several freely available, but the only
one-parameter family the hydrostatic structure of
\S\ref{sec:hydrostatic-integral} permits, with $\mathcal J$ itself
singled out by the harmless convention that $\xi$ and $\rho$ agree at
$y=0$.

The vertical constancy of $\mathfrak M$ (Proposition \ref{lem:first-integral})
-- valid, we recall, at every barotropic law, not only the isothermal
one -- is therefore not merely analogous to the defining property
$\partial_z\xi=0$ of the transformed density in
\eqref{eq:wdj-transformed}: at the isothermal law the two are the same
statement, related by the single exponential change of unknown
\eqref{eq:ens-is-montgomery}. The Ersoy--Ngom--Sy reduction does not
discover a new conserved structure; it re-exposes, in coordinates
adapted to it, the one this note's construction produces for every
pressure law. Under
\eqref{eq:wdj-ens-transform}--\eqref{eq:wdj-viscosity-choice},
\eqref{eq:CPE-damped} formally becomes, on $\Omega_z:=\mathbb
T^2\times(0,h)$,
\begin{equation}
\label{eq:wdj-transformed}
\left\{
\begin{aligned}
&\partial_t\xi+\operatorname{div}_x(\xi \bm{u})+\partial_z(\xi\,w)=0,\\
&\partial_t(\xi \bm{u})+\operatorname{div}_x(\xi \bm{u}\otimes \bm{u})+\partial_z(\xi \bm{u}w)+c^2\nabla_x\xi+r\xi|\bm{u}|\bm{u}
=2\bar\nu_1\operatorname{div}_x(\xi D_x\bm{u})+\bar\nu_2\partial_z(\xi\partial_z\bm{u}),\\
&\partial_z\xi=0,
\end{aligned}
\right.
\end{equation}
with periodicity in $x$, $w|_{z=0,h}=0$, $\partial_z\bm{u}|_{z=0,h}=0$ --
this is system (2.1) of \cite{WangDouJiu}. Wang, Dou and Jiu's
Definition 2.1 declares $(\xi,\bm{u},w)$ a weak solution of
\eqref{eq:wdj-transformed} if the initial data are attained (their
(2.3)) and the regularity class of their (2.5) holds -- namely
$\xi\in L^\infty(0,T;L^3(\Omega_z))$, $\sqrt\xi \bm{u}\in
L^\infty(0,T;L^2(\Omega_z))$, $\sqrt\xi\in L^\infty(0,T;H^1(\Omega_z))$,
and $\sqrt\xi\nabla_x\bm{u},\sqrt\xi\partial_z\bm{u},\sqrt\xi w,\sqrt\xi\partial_zw\in
L^2((0,T)\times\Omega_z)$ -- together with $\xi^{1/3}\bm{u}\in
L^2((0,T)\times\Omega_z)$. The equations
\eqref{eq:wdj-transformed} must then hold in the ordinary
distributional sense on $(0,T)\times\Omega_z$ (their point (2) of
Definition 2.1), tested directly against $\psi\in
C_c^\infty([0,T)\times\overline{\Omega_z})$: no distinguished pairing
device beyond this is used. This is the direct analogue, for
\eqref{eq:wdj-transformed}, of Definition \ref{def:weak-solution}
above -- indeed, combined with $\rho=\xi\mathcal J$ and $0<\mathcal
J\le1$, their bound $\sqrt\xi\nabla_x\bm{u}\in L^2$ on the full gradient, once
combined with the identity $\nu_1(\rho)D_x\bm{u}=\bar\nu_1\xi\mathcal J\,D_x\bm{u}$
of \eqref{eq:wdj-derivative-identities} below, controls -- it is in fact
strictly stronger than -- the bound $\sqrt{\nu_1(\rho)}D_x\bm{u}\in L^2$ on
the symmetric part alone required by Definition
\ref{def:weak-solution}(ii) for the linear viscosity of Assumption
\ref{ass:linear-viscosity}. Their Theorem 2.1 constructs such a weak
solution, for admissible initial data (their (2.4), including
$\xi_0\ln\xi_0-\xi_0+1\in L^1(\Omega)$), satisfying an energy
inequality (their (2.6)) and a Bresch--Desjardins entropy inequality
(their (2.11)). Both are established through their five-parameter
approximation scheme (their Secs.\ III--VI) and transferred to the
limit solution by weak lower semicontinuity; we do not track these
through their intermediate equation numbers here. Their Theorem 2.2
then states that, under the argument of Theorem 2.1, system (1.1)--(1.3) -- our
\eqref{eq:CPE-damped} with the extra damping term of Remark
\ref{rem:wdj-damping-extension} above -- has a weak solution
$(\rho,\bm{u},v)$ in the sense of Definition 2.1 with $(\xi,\bm{u},w)$ replaced
by $(\rho,\bm{u},v)$. This is the pull-back we make explicit below.

\begin{lemma}[Transfer identities and their derivatives]
\label{lem:wdj-transfer-identities}
Let $(\xi,\bm{u},w)$ solve \eqref{eq:wdj-transformed} on $\Omega_z$ and
define $(\rho,\bm{u},v)$ on $\Omega$ by inverting
\eqref{eq:wdj-ens-transform}:
\begin{equation}
\label{eq:wdj-inverse-transform}
\rho(t,x,y):=\xi(t,x,\zeta(y))\,\mathcal J(y),\qquad
v(t,x,y):=\frac{w(t,x,\zeta(y))}{\mathcal J(y)},
\end{equation}
$\bm{u}$ unchanged. Then, at every corresponding point $(y,z=\zeta(y))$,
\begin{align}
\rho\,dy&=\xi\,dz,
\label{eq:wdj-measure-identity}\\
\rho v&=\xi\,w,
\label{eq:wdj-flux-identity}\\
\partial_y\bm{u}&=\mathcal J\,\partial_z\bm{u},
\qquad
\nu_1(\rho)D_x\bm{u}=\bar\nu_1\xi\mathcal J\,D_x\bm{u},
\qquad
\nu_2(\rho)\,\partial_y\bm{u}=\bar\nu_2\xi\,\partial_z\bm{u},
\label{eq:wdj-derivative-identities}\\
\partial_yv&=\frac1{H_s\mathcal J}\,w+\partial_zw,
\qquad
\nabla_x\log\rho=\nabla_x\log\xi\quad\text{on }\{\xi>0\}.
\label{eq:wdj-v-identity}
\end{align}
The logarithmic identity in \eqref{eq:wdj-v-identity} is understood
pointwise on the positivity set $\{\xi>0\}$ only, where $\log\xi$ is
defined; it plays no role on $\{\xi=0\}$ and is used below solely in
regular, strictly positive computations (Proposition
\ref{prop:wdj-mechanism-reduction}, Remark
\ref{rem:wdj-cross-term-favorable}), never in the weak transfer
argument of Theorem \ref{thm:wdj-transfer} or Corollary
\ref{cor:wdj-energy-transfer}, which is vacuum-safe and does not
involve $\nabla_x\xi/\xi$ in any form.
\end{lemma}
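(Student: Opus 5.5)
The plan is a direct pointwise computation from the inverse transform \eqref{eq:wdj-inverse-transform}, using only the two elementary facts $\zeta'(y)=\mathcal J(y)$ and $\mathcal J'(y)=-\mathcal J(y)/H_s$, together with the chain rule $\partial_y[f(t,x,\zeta(y))]=\mathcal J(y)\,(\partial_zf)(t,x,\zeta(y))$. No equation of \eqref{eq:wdj-transformed} is needed: each identity is kinematic, and holds at every point $(y,z=\zeta(y))$ where the quantities involved are differentiable. I would state this regularity convention once at the start, since the lemma is applied both to regular and weak solutions; derivative identities are then understood a.e./distributionally by the same chain rule for a smooth diffeomorphism.

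First, \eqref{eq:wdj-measure-identity}: from $\rho=\xi\mathcal J$ and $dz=\zeta'(y)\,dy=\mathcal J\,dy$ we get $\rho\,dy=\xi\mathcal J\,dy=\xi\,dz$. Next, \eqref{eq:wdj-flux-identity}: $\rho v=\xi\mathcal J\cdot w/\mathcal J=\xi w$, the Jacobian cancelling exactly. For \eqref{eq:wdj-derivative-identities}, the first identity is the chain rule applied to $\bm u(t,x,\zeta(y))$. The second follows from Assumption \ref{ass:wdj-vertical-viscosity}, $\nu_1(\rho)=\bar\nu_1\rho=\bar\nu_1\xi\mathcal J$, with $D_x$ untouched because $\zeta$ does not depend on $x$. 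The third combines $\nu_2=\bar\nu_2\rho/\mathcal J^2=\bar\nu_2\xi/\mathcal J$ with $\partial_y\bm u=\mathcal J\partial_z\bm u$, giving $\bar\nu_2\xi\,\partial_z\bm u$. This is the step where the specific choice \eqref{eq:wdj-viscosity-choice} is essential, and I would remark that any other power of $\mathcal J$ in $\nu_2$ would leave a residual factor.

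For \eqref{eq:wdj-v-identity}, differentiate $v=w(\zeta(y))/\mathcal J(y)$ with the quotient rule:
\[
\partial_yv=\frac{\mathcal J\,\partial_zw}{\mathcal J}-\frac{w\,\mathcal J'}{\mathcal J^2}=\partial_zw+\frac{w}{H_s\mathcal J},
\]
using $\mathcal J'=-\mathcal J/H_s$. Finally, on $\{\xi>0\}$ we have $\log\rho=\log\xi+\log\mathcal J(y)$, and $\nabla_x\log\mathcal J=0$ because $\mathcal J$ depends on $y$ alone.

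The only delicate point is bookkeeping rather than analysis. One must consistently evaluate $z$-derivatives at $z=\zeta(y)$, and note that $\mathcal J>0$ on $[0,H]$ so every division is legitimate. The lemma itself presents no real obstacle; the care goes into recording that the logarithmic identity is restricted to $\{\xi>0\}$, as the statement stipulates.
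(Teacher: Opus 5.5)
Your proposal is correct and follows the paper's proof essentially line by line. Both use $\zeta'=\mathcal J$ with $\rho=\xi\mathcal J$, the substitution $\nu_2=\bar\nu_2\rho/\mathcal J^2$, the quotient rule with $\mathcal J'=-\mathcal J/H_s$, and the $x$-independence of $\log\mathcal J$ on $\{\xi>0\}$; your added convention that the derivative identities hold a.e.\ for weak solutions is a harmless refinement the paper leaves implicit.
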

\begin{proof}
Since $\zeta'(y)=\mathcal J$, $dy=dz/\mathcal J$; combined with
$\rho=\xi\mathcal J$ this gives \eqref{eq:wdj-measure-identity}:
\[
\rho\,dy=\xi\mathcal J\cdot\frac{dz}{\mathcal J}=\xi\,dz.
\]
For \eqref{eq:wdj-flux-identity}, using $v=w/\mathcal J$,
\[
\rho v=\xi\mathcal J\cdot\frac w{\mathcal J}=\xi\,w.
\]
For \eqref{eq:wdj-derivative-identities}: by the chain rule,
$\partial_y\bm{u}=\partial_z\bm{u}\cdot\zeta'(y)=\mathcal J\,\partial_z\bm{u}$; directly,
$\nu_1(\rho)D_x\bm{u}=\bar\nu_1\rho D_x\bm{u}=\bar\nu_1\xi\mathcal J\,D_x\bm{u}$; and,
using $\nu_2(\rho)=\bar\nu_2\rho/\mathcal J^2$,
\[
\nu_2(\rho)\partial_y\bm{u}=\frac{\bar\nu_2\rho}{\mathcal J^2}\cdot\mathcal J\,\partial_z\bm{u}
=\frac{\bar\nu_2\rho}{\mathcal J}\,\partial_z\bm{u}
=\bar\nu_2\xi\,\partial_z\bm{u},
\]
using $\rho=\xi\mathcal J$ in the last step. For \eqref{eq:wdj-v-identity}:
differentiating $v=w(\cdot,\zeta(y))/\mathcal J(y)$ in $y$ and using
$\mathcal J'(y)=-\mathcal J/H_s$,
\[
\partial_yv=\frac{\partial_zw\cdot\zeta'(y)}{\mathcal J}-\frac{\mathcal J'}{\mathcal J^2}\,w
=\partial_zw+\frac1{H_s\mathcal J}\,w;
\]
and, on $\{\xi>0\}$, where $\log\xi$ is defined, $\log\rho=\log\xi+\log\mathcal J$,
with $\log\mathcal J$ independent
of $x$, so $\nabla_x\log\rho=\nabla_x\log\xi$ there.
\end{proof}

\begin{theorem}[Rigorous transfer to the original hydrostatic variables]
\label{thm:wdj-transfer}
Let $(\xi,\bm{u},w)$ be a weak solution of \eqref{eq:wdj-transformed} on
$(0,T)\times\Omega_z$ in the sense of \cite[Def.\ 2.1]{WangDouJiu}, and
let $(\rho,\bm{u},v)$ be given by \eqref{eq:wdj-inverse-transform}. Then
$(\rho,\bm{u},v)$ satisfies the continuity and momentum equations of
\eqref{eq:CPE-damped} in $\mathcal D'((0,T)\times\Omega)$, in the exact
sense of Definition \ref{def:weak-solution}(iii)--(iv) extended by the
damping term of Remark \ref{rem:wdj-damping-extension}, with initial
data $\rho_0=\xi_0\mathcal J$, $\tilde m_0=m_0^{\mathrm{WDJ}}\mathcal J$, where
$\tilde m_0$ denotes the original-variable initial momentum and
$m_0^{\mathrm{WDJ}}$ the transformed one, related exactly as in
\cite[eq.\ (2.3)]{WangDouJiu}. Proposition
\ref{prop:wdj-hydrostatic-transfer} below shows that $(\rho,\bm{u},v)$
in fact also satisfies Definition \ref{def:weak-solution}(i), and
Proposition \ref{prop:wdj-regularity-transfer} below shows that it
satisfies the regularity class (ii); combined with the present
theorem's (iii)--(iv), the three results together identify
$(\rho,\bm{u},v)$ as a complete weak solution in the sense of
Definition \ref{def:weak-solution}.
\end{theorem}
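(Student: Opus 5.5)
The proof is a pull-back of test functions. Given a test function $\varphi\in C_c^\infty([0,T)\times\overline\Omega)$, periodic in $x$, for the original continuity equation, I will define the transformed test function $\tilde\varphi(t,x,z):=\varphi(t,x,\zeta^{-1}(z))$. Since $\zeta$ is a smooth increasing diffeomorphism of $[0,H]$ onto $[0,h]$ with $\zeta'=\mathcal J\in[e^{-H/H_s},1]$, this $\tilde\varphi$ lies in $C_c^\infty([0,T)\times\overline{\Omega_z})$. It is admissible in \cite[Def.\ 2.1]{WangDouJiu}, and the assignment is bijective on test spaces. The chain rule gives
\[
\partial_y\varphi=\mathcal J\,\partial_z\tilde\varphi,\qquad
\nabla_x\varphi=\nabla_x\tilde\varphi,\qquad
\partial_t\varphi=\partial_t\tilde\varphi .
\]

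\emph{Continuity equation.} Substituting \eqref{eq:wdj-measure-identity} and \eqref{eq:wdj-flux-identity} into \eqref{eq:weak-continuity} term by term gives the following.
\begin{itemize}
\item $\rho\,\partial_t\varphi\,dy=\xi\,\partial_t\tilde\varphi\,dz$.
\item $\rho\,\bm u\cdot\nabla_x\varphi\,dy=\xi\,\bm u\cdot\nabla_x\tilde\varphi\,dz$.
\item $\rho v\,\partial_y\varphi\,dy=\xi w\,\mathcal J\,\partial_z\tilde\varphi\,dz/\mathcal J=\xi w\,\partial_z\tilde\varphi\,dz$.
\item $\rho_0\varphi(0)\,dy=\xi_0\tilde\varphi(0)\,dz$.
\end{itemize}
The original weak continuity identity is therefore literally the transformed one tested against $\tilde\varphi$. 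That identity vanishes by \cite[Def.\ 2.1]{WangDouJiu}.

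\emph{Momentum equation.} I will proceed identically with $\psi\mapsto\tilde\psi$, checking each term in turn.
\begin{itemize}
\item Inertial and transport terms: $\rho\bm u\cdot\partial_t\psi$, $\rho\bm u\otimes\bm u:\nabla_x\psi$ and $\rho v\bm u\cdot\partial_y\psi$ become their $\xi$-counterparts by the same two identities.
\item Damping: $r\rho|\bm u|\bm u\cdot\psi\,dy=r\xi|\bm u|\bm u\cdot\tilde\psi\,dz$.
\item Initial datum: $\tilde m_0\cdot\psi(0)\,dy=m_0^{\mathrm{WDJ}}\mathcal J\cdot\tilde\psi(0)\,dz/\mathcal J$. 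This is the reason for the normalization $\tilde m_0=m_0^{\mathrm{WDJ}}\mathcal J$.
\item Pressure: $p(\rho)\operatorname{div}_x\psi\,dy=c^2\xi\,\operatorname{div}_x\tilde\psi\,dz$, matching the weak form of $c^2\nabla_x\xi$.
\item Horizontal viscosity: by \eqref{eq:wdj-derivative-identities}, $\nu_1(\rho)D_x\bm u:D_x\psi\,dy=\bar\nu_1\xi\mathcal J\,D_x\bm u:D_x\tilde\psi\,dz/\mathcal J$, which matches.
\item Vertical viscosity: $\nu_2(\rho)\partial_y\bm u\cdot\partial_y\psi\,dy=\bar\nu_2\xi\,\partial_z\bm u\cdot\mathcal J\partial_z\tilde\psi\,dz/\mathcal J=\bar\nu_2\xi\,\partial_z\bm u\cdot\partial_z\tilde\psi\,dz$. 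This is exactly where the choice \eqref{eq:wdj-viscosity-choice} with $\mathcal J^{-2}$ is forced.
\end{itemize}
Summing, \eqref{eq:weak-momentum} with damping equals WDJ's weak momentum identity for $\tilde\psi$, hence vanishes. Boundary conditions need no separate treatment. Both formulations test with functions nonzero up to the boundary and contain no boundary terms, and $\zeta$ maps $\{0,H\}$ onto $\{0,h\}$.

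\emph{Main obstacle.} The main obstacle is not algebraic but justifying the substitutions at the level of WDJ's regularity class, including on vacuum sets. I will argue that every integrand above is a product of $L^1$-type quantities in WDJ's class: $\xi\bm u=\sqrt\xi\cdot\sqrt\xi\bm u$, $\xi|\bm u|^2$, $\xi w=\sqrt\xi\cdot\sqrt\xi w$, $\xi\nabla_x\bm u$, $\xi\partial_z\bm u$ and $\xi|\bm u|^2\bm u$. The last is handled through $\xi^{1/3}\bm u$ together with $\xi\in L^\infty L^3$, or else is left as stated by WDJ. Multiplication by the bounded, bounded-below smooth factors $\mathcal J^{\pm1}$ preserves integrability. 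The change of variables $y=\zeta^{-1}(z)$ is then a bi-Lipschitz substitution in Lebesgue integrals, valid by Fubini for a.e.\ $(t,x)$, with no differentiation of $\xi$ or of $\log\xi$. The computation is therefore vacuum-safe. The one point requiring care is that derivatives such as $\partial_y\bm u$ and $\partial_z\bm u$ are interpreted as the weighted weak derivatives appearing in WDJ's class. I will note that $\partial_y\bm u=\mathcal J\partial_z\bm u$ holds as an identity of distributions under composition with the smooth diffeomorphism, and hence also for the products with $\sqrt\xi$.
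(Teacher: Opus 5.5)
Your proposal is correct and is essentially the paper's proof: you pull test functions back through $\zeta$ and match every term of the continuity and damped momentum formulations via the transfer identities, with the Jacobian $dy=dz/\mathcal J$ cancelling in each term, including the initial data, which fixes $\tilde m_0=m_0^{\mathrm{WDJ}}\mathcal J$. Your integrability discussion goes slightly beyond the paper's bare appeal to Fubini; note, though, that the damping integrand is $\xi|\bm u|\bm u$, not a cubic term, so it is already controlled by $\xi|\bm u|^2\in L^\infty(0,T;L^1(\Omega_z))$ and needs no appeal to $\xi^{1/3}\bm u$.
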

\begin{proof}
Since $\zeta:(0,H)\to(0,h)$ is a smooth increasing diffeomorphism between
bounded open intervals extending smoothly to the closures -- in
particular sending $y=0,H$ to $z=0,h$, so that the boundary conditions
of Section \ref{sec:model} and of \eqref{eq:wdj-transformed} are
matched by this same bijection, each encoded, as in Definition
\ref{def:weak-solution} and in \cite[Def.\ 2.1]{WangDouJiu}
respectively, through the test-function space reaching the closure
rather than through a separate boundary term -- composition
with $(\mathrm{id}_t,\mathrm{id}_x,\zeta)$ is a bijection between test
functions $\varphi\in C_c^\infty([0,T)\times\overline\Omega)$, periodic
in $x$, and $\tilde\varphi:=\varphi(\cdot,\cdot,\zeta^{-1}(\cdot))\in
C_c^\infty([0,T)\times\overline{\Omega_z})$, periodic in $x$, and
likewise, componentwise, between the vector-valued test functions
$\psi\in C_c^\infty([0,T)\times\overline\Omega;\mathbb R^2)$ of
\eqref{eq:weak-momentum} and
$\tilde\psi:=\psi(\cdot,\cdot,\zeta^{-1}(\cdot))$; it therefore suffices
to check that each term of \eqref{eq:weak-continuity}, written for
$(\rho,\bm{u},v)$ against $\varphi$, equals the corresponding term of the
weak formulation of \eqref{eq:wdj-transformed} against $\tilde\varphi$,
and that each term of \eqref{eq:weak-momentum} (extended by the damping
term), against $\psi$, equals the corresponding term against
$\tilde\psi$, after integrating in $y$ against $dy=dz/\mathcal J$
(Fubini, together with $t,x$ being untouched by the transform). This is
a direct consequence of Lemma \ref{lem:wdj-transfer-identities}, term by
term:
\begin{itemize}
\item Mass and its flux: directly by \eqref{eq:wdj-measure-identity},
\[
\rho\,\partial_t\varphi\,dy=\xi\,\partial_t\tilde\varphi\,dz,
\qquad
\rho \bm{u}\cdot\nabla_x\varphi\,dy=\xi \bm{u}\cdot\nabla_x\tilde\varphi\,dz,
\]
using $\partial_t\varphi(\cdot,\cdot,y)=\partial_t\tilde\varphi(\cdot,\cdot,z)$
and $\nabla_x\varphi=\nabla_x\tilde\varphi$ at corresponding points,
since $t,x$-derivatives commute with the purely $y$-dependent change of
variables.
\item Vertical mass flux: since $\partial_y\varphi=\mathcal J\,\partial_z\tilde\varphi$
by the same computation as \eqref{eq:wdj-derivative-identities},
\eqref{eq:wdj-flux-identity} gives
\[
\rho v\,\partial_y\varphi\,dy=(\xi\,w)\cdot\mathcal J\,\partial_z\tilde\varphi\cdot
dz/\mathcal J=\xi\,w\,\partial_z\tilde\varphi\,dz.
\]
\item Momentum time derivative and horizontal convection: directly by
\eqref{eq:wdj-measure-identity},
\[
\rho \bm{u}\,\partial_t\psi\,dy=\xi \bm{u}\,\partial_t\tilde\psi\,dz,
\qquad
\rho \bm{u}\otimes \bm{u}:\nabla_x\psi\,dy=\xi \bm{u}\otimes \bm{u}:\nabla_x\tilde\psi\,dz.
\]
\item Vertical momentum convection: as above,
\[
\rho v\bm{u}\cdot\partial_y\psi\,dy
=(\xi\,w)\bm{u}\cdot\mathcal J\,\partial_z\tilde\psi\cdot dz/\mathcal J
=\xi \bm{u}w\cdot\partial_z\tilde\psi\,dz.
\]
\item Pressure: since $p(\rho)=c^2\rho=c^2\xi\mathcal J$, with $\mathcal
J$ independent of $x$, in the divergence-tested form of
\eqref{eq:weak-momentum} in which the pressure term of the weak
formulation is stated -- the only form used, since $\xi$ in the WDJ
regularity class of Definition \ref{def:weak-solution} carries no
control on $\nabla_x\xi$ --
\[
p(\rho)\operatorname{div}_x\psi\,dy
=c^2\xi\mathcal J\,\operatorname{div}_x\psi\cdot dz/\mathcal J
=c^2\xi\operatorname{div}_x\psi\,dz.
\]
\item Damping:
\[
\rho|\bm{u}|\bm{u}\cdot\psi\,dy=\xi\mathcal J\,|\bm{u}|\bm{u}\cdot\psi\cdot
dz/\mathcal J=\xi|\bm{u}|\bm{u}\cdot\psi\,dz.
\]
\item Horizontal viscosity: by \eqref{eq:wdj-derivative-identities},
\[
\nu_1(\rho)D_x\bm{u}:D_x\psi\,dy=\bar\nu_1\xi\mathcal J\,D_x\bm{u}:D_x\tilde\psi\cdot
dz/\mathcal J=\bar\nu_1\xi D_x\bm{u}:D_x\tilde\psi\,dz.
\]
\item Vertical viscosity: by \eqref{eq:wdj-derivative-identities},
\[
\nu_2(\rho)\partial_y\bm{u}\cdot\partial_y\psi\,dy
=\bar\nu_2\xi\partial_z\bm{u}\cdot\mathcal J\,\partial_z\tilde\psi\cdot dz/\mathcal J
=\bar\nu_2\xi\partial_z\bm{u}\cdot\partial_z\tilde\psi\,dz.
\]
\end{itemize}
Every term of \eqref{eq:weak-continuity}--\eqref{eq:weak-momentum}
(extended by the damping term) and of the corresponding weak
formulation of \eqref{eq:wdj-transformed} has now been matched termwise,
with no residual factor of $\mathcal J$: the Jacobian $dy=dz/\mathcal J$
is, in every single term, exactly cancelled by the explicit $\mathcal J$
carried by $\rho=\xi\mathcal J$, $\rho v=\xi\,w$, or by the specific
$y$-dependence of $\nu_2$ in \eqref{eq:wdj-viscosity-choice}, which is
seen here to be the one choice making the vertical-viscosity term
transform without correction. Integrating in $t,x$ and invoking Fubini
converts the pointwise-in-$(t,x)$ matching above into the equality of
the two full weak formulations; the initial-data term transfers
identically by the same computation as the momentum time-derivative
term above (with $\psi(0,\cdot)$ in place of $\partial_t\psi$),
giving $\tilde m_0=\rho_0\bm{u}_0=\xi_0\mathcal J\,\bm{u}_0=m_0^{\mathrm{WDJ}}\mathcal J$,
consistently with \cite[eq.\ (2.3)]{WangDouJiu}.
\end{proof}

\begin{remark}[Why the local hypothesis does not affect the preceding
results]
\label{rem:wdj-viscosity-divergence-form}
The proofs of the mechanical-energy identity (Theorem
\ref{thm:total-energy}) and of the augmented-velocity identity
(Theorem \ref{thm:kappa-entropy}) use the vertical viscosity only
through the divergence-form flux $\partial_y(\nu_2(\rho)\partial_y\bm{u})$,
tested against $\bm{u}$ or $U$ respectively: they integrate this flux by
parts once and never differentiate the coefficient $\nu_2$ itself,
whether with respect to $\rho$ or to $y$. Consequently both arguments
remain valid verbatim for any nonnegative coefficient
$\nu_2=\nu_2(\rho,y)$ possessing the regularity and integrability
required at the point of use, and in particular for the local
hypothesis of Assumption \ref{ass:wdj-vertical-viscosity} and its
specific choice \eqref{eq:wdj-viscosity-choice}, used throughout this
subsection and in Proposition \ref{prop:wdj-mechanism-reduction}
below. This is recorded here once, rather than qualified at each
subsequent use.
\end{remark}

\begin{proposition}[The hydrostatic relation transfers as well]
\label{prop:wdj-hydrostatic-transfer}
Under the hypotheses of Theorem \ref{thm:wdj-transfer}, $(\rho,\bm{u},v)$
given by \eqref{eq:wdj-inverse-transform} also satisfies Definition
\ref{def:weak-solution}(i): \eqref{eq:HB-weak} holds on
$(0,T)\times\Omega$. Together with Theorem
\ref{thm:wdj-transfer}(iii)--(iv) and, below, Proposition
\ref{prop:wdj-regularity-transfer}(ii), this identifies
$(\rho,\bm{u},v)$ as a complete weak solution of
\eqref{eq:CPE-damped} in the sense of Definition
\ref{def:weak-solution}, including on any vacuum region $\{\xi=0\}$: no
non-degeneracy of $\xi$ beyond membership in the WDJ regularity class
is used in this verification of (i).
\end{proposition}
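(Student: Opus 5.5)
The plan is to verify \eqref{eq:HB-weak} directly from the weak constraint $\partial_z\xi=0$ of \eqref{eq:wdj-transformed}, with no pointwise formula and no division by $\xi$. Because $p(\rho)=c^2\rho$, the identity to prove is linear in $\rho$:
\[
c^2\int_\Omega\xi(t,x,\zeta(y))\,\mathcal J(y)\,\partial_y\chi\,dx\,dy=g\int_\Omega\xi(t,x,\zeta(y))\,\mathcal J(y)\,\chi\,dx\,dy
\]
for every $\chi\in C_c^\infty(\Omega)$. The integrability requirement $\rho,p(\rho)\in L^1_{loc}(\Omega)$ follows at once from $\xi\in L^\infty(0,T;L^3(\Omega_z))$ and $0<\mathcal J\le1$, together with the change of variables $dy=dz/\mathcal J$. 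In the isothermal case $p$ is linear, so these two conditions coincide.

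The first step is to make precise the sense in which $\partial_z\xi=0$. The WDJ class gives $\sqrt\xi\in L^\infty(0,T;H^1(\Omega_z))$. The third equation of \eqref{eq:wdj-transformed} is understood distributionally, so for a.e.\ $t$ we have $\int_{\Omega_z}\xi\,\partial_z\theta\,dx\,dz=0$ for all $\theta\in C_c^\infty(\Omega_z)$. If that formulation is felt to be ambiguous, I would derive the same identity from $\partial_z\sqrt\xi=0$ in $L^2$, using $\partial_z\xi=2\sqrt\xi\,\partial_z\sqrt\xi$ in $L^1$.

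The second step is to choose $\theta$ suitably. I would pass to $z$ and write $\tilde\chi(x,z):=\chi(x,\zeta^{-1}(z))$, which lies in $C_c^\infty(\Omega_z)$ because $\zeta$ is a diffeomorphism. Using $\partial_y\chi=\mathcal J\,\partial_z\tilde\chi$ and $dy=dz/\mathcal J$, the left side becomes $c^2\int\xi\,\mathcal J\,\partial_z\tilde\chi\,dx\,dz$, and the right side becomes $g\int\xi\,\tilde\chi\,dx\,dz$. Here $\mathcal J=1-z/H_s$ is now read as a function of $z$. Take $\theta:=\mathcal J\tilde\chi\in C_c^\infty(\Omega_z)$. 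Then $\partial_z\theta=\mathcal J\,\partial_z\tilde\chi-\tilde\chi/H_s$, and the constraint gives
\[
\int\xi\,\mathcal J\,\partial_z\tilde\chi=\frac1{H_s}\int\xi\,\tilde\chi.
\]
Multiplying by $c^2$ and using $c^2/H_s=g$ from \eqref{eq:stratification-height} yields exactly the desired identity.

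Vacuum requires no separate treatment. No step divides by $\xi$ or uses $\log\xi$, so the argument holds verbatim on $\{\xi=0\}$, which is what the statement asserts.

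The main obstacle is the first step: fixing rigorously, for a.e.\ $t$, the meaning of $\partial_z\xi=0$ in the WDJ definition, and choosing a single null set of times that works for all test functions $\chi$. I would handle the time selection with a countable dense family of test functions, as in the proof of Lemma \ref{lem:HB-weak-pointwise}. I would handle the meaning of the constraint through the $H^1$ bound on $\sqrt\xi$ indicated above.
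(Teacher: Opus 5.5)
Your proposal is correct and is essentially the paper's own proof: both test the distributional constraint $\partial_z\xi=0$ against $\mathcal J\tilde\chi$, where $\mathcal J=1-z/H_s$ and $\tilde\chi=\chi\circ\zeta^{-1}$, use that $\mathcal J'\equiv-1/H_s$ is constant, and then use $c^2/H_s=g$. The only difference is cosmetic: the paper first reduces to a.e.\ vertical fiber $(t,x)$ and integrates in $x$ at the end, whereas you work with the full test function on $\Omega_z$ for a.e.\ $t$ (and your fallback via $\partial_z\sqrt\xi=0$ is unnecessary, since the WDJ definition already imposes $\partial_z\xi=0$ in $\mathcal D'$).
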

\begin{proof}
By \cite[Def.\ 2.1]{WangDouJiu}, the third equation of
\eqref{eq:wdj-transformed}, $\partial_z\xi=0$, holds in
$\mathcal D'((0,T)\times\Omega_z)$; since it involves no $t$- or
$x$-derivative, the same separable-test-function argument as in the
proof of Lemma \ref{lem:HB-weak-pointwise} reduces it, for a.e.\
$(t,x)$, to
\begin{equation}
\label{eq:wdj-xi-fiber}
\int_0^h\xi(z)\,\psi'(z)\,dz=0\qquad\text{for every }\psi\in C_c^\infty(0,h).
\end{equation}
Fix such $(t,x)$, let $\chi\in C_c^\infty(0,H)$, and set
$\tilde\chi(z):=\chi(\zeta^{-1}(z))\in C_c^\infty(0,h)$, using that
$\zeta$ extends to a smooth diffeomorphism of the closures
(\S\ref{subsec:wdj-transfer}). By \eqref{eq:wdj-inverse-transform},
$\rho(y)=\xi(\zeta(y))\mathcal J(y)$, and, since
$\partial_y\chi=\mathcal J\cdot(\tilde\chi'\circ\zeta)$ (the same
identity as \eqref{eq:wdj-derivative-identities}, applied to $\chi$)
and $dy=dz/\mathcal J$,
\[
\int_0^H\rho\,\partial_y\chi\,dy
=\int_0^H\xi(\zeta(y))\,\mathcal J(y)^2\,\tilde\chi'(\zeta(y))\,dy
=\int_0^h\xi(z)\,\mathfrak J(z)\,\tilde\chi'(z)\,dz,
\qquad
\mathfrak J(z):=\mathcal J(\zeta^{-1}(z))=1-\frac z{H_s},
\]
the last equality by the change of variable $y\mapsto z$ and the
identity $\mathcal J=1-z/H_s$ along $z=\zeta(y)$ recorded in
\S\ref{subsec:wdj-transfer}. Since $\mathfrak J\tilde\chi\in
C_c^\infty(0,h)$, testing \eqref{eq:wdj-xi-fiber} against it and
expanding by the product rule gives
\[
0=\int_0^h\xi\,\partial_z(\mathfrak J\tilde\chi)\,dz
=\int_0^h\xi\,\mathfrak J\,\tilde\chi'\,dz+\int_0^h\xi\,\mathfrak J'\,\tilde\chi\,dz,
\]
and $\mathfrak J'\equiv-1/H_s$ is constant (immediate from
$\mathfrak J(z)=1-z/H_s$), so
\[
\int_0^H\rho\,\partial_y\chi\,dy
=\int_0^h\xi\,\mathfrak J\,\tilde\chi'\,dz
=-\int_0^h\xi\,\mathfrak J'\,\tilde\chi\,dz
=\frac1{H_s}\int_0^h\xi\,\tilde\chi\,dz
=\frac1{H_s}\int_0^H\rho\,\chi\,dy,
\]
the last equality by \eqref{eq:wdj-measure-identity} and
$\tilde\chi(\zeta(y))=\chi(y)$. Multiplying by $c^2=p'(\rho)$ under
\eqref{eq:isothermal} and using $H_s=c^2/g$ gives exactly the fiber
relation \eqref{eq:HB-weak-fiber}:
$\int_0^Hp(\rho)\,\partial_y\chi\,dy=g\int_0^H\rho\,\chi\,dy$.
Integrating in $x$ against $\varphi\in C_c^\infty(\mathbb T^2)$ --
exactly as in the converse direction of the proof of Lemma
\ref{lem:HB-weak-pointwise} -- recovers \eqref{eq:HB-weak} on $\Omega$
for a.e.\ $t$, and integrating further in $t$ extends it to
$(0,T)\times\Omega$.
\end{proof}

\begin{remark}[No non-degeneracy is needed here]
\label{rem:wdj-hydrostatic-transfer-vacuum}
The identity $\mathfrak J'\equiv-1/H_s$ is what lets
\eqref{eq:HB-weak-fiber} be extracted from $\partial_z\xi=0$ by a single
integration by parts, valid for $\xi$ merely in $L^1_{loc}$, vanishing
or not: no non-degeneracy hypothesis on $\xi$ is used. This is the
vacuum-robustness Definition \ref{def:weak-solution}(i) was built for
(Remark \ref{rem:hydrostatic-weak}), and Proposition
\ref{prop:wdj-hydrostatic-transfer} shows it is not vacuous even in the
one setting -- the isothermal law -- where the \emph{pointwise}
Montgomery formula of Proposition \ref{prop:isothermal-profile}
structurally excludes vacuum. The two statements do not contradict one
another: Proposition \ref{prop:isothermal-profile} concerns regular,
strictly positive solutions, for which the pointwise formula and
\eqref{eq:HB-weak} coincide by Lemma \ref{lem:HB-weak-pointwise}; on a
genuine vacuum set, only \eqref{eq:HB-weak} remains meaningful, and it
is what Proposition \ref{prop:wdj-hydrostatic-transfer} verifies
directly, with no appeal to the pointwise formula at all.
\end{remark}

\begin{proposition}[Transfer of the WDJ regularity class]
\label{prop:wdj-regularity-transfer}
Under the hypotheses of Theorem \ref{thm:wdj-transfer}, $(\rho,\bm{u},v)$
given by \eqref{eq:wdj-inverse-transform} also satisfies Definition
\ref{def:weak-solution}(ii): all eight memberships
\[
\rho\in L^\infty(0,T;L^1(\Omega)),\qquad
e(\rho)\in L^\infty(0,T;L^1(\Omega)),\qquad
p(\rho)\in L^1((0,T)\times\Omega),
\]
\[
\sqrt\rho\,\bm{u}\in L^\infty(0,T;L^2(\Omega)),\qquad
\sqrt\rho\,v\in L^2((0,T)\times\Omega),
\]
\[
\nu_1(\rho),\nu_2(\rho)\in L^1((0,T)\times\Omega),\qquad
\sqrt{\nu_1(\rho)}\,D_x\bm{u},\ \sqrt{\nu_2(\rho)}\,\partial_y\bm{u}\in L^2((0,T)\times\Omega)
\]
hold, with no non-degeneracy of $\xi$ used: the argument is valid on any
vacuum region $\{\xi=0\}$. Combined with Theorem
\ref{thm:wdj-transfer}(iii)--(iv) and Proposition
\ref{prop:wdj-hydrostatic-transfer}(i), this completes the verification
that $(\rho,\bm{u},v)$ is a complete weak solution of
\eqref{eq:CPE-damped} in the sense of Definition
\ref{def:weak-solution}.
\end{proposition}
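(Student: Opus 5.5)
The plan is to verify each of the eight memberships by the change of variables $z=\zeta(y)$, $dy=dz/\mathcal J$. The key inputs are the transfer identities of Lemma \ref{lem:wdj-transfer-identities} and the WDJ class: $\xi\in L^\infty(0,T;L^3(\Omega_z))$, $\sqrt\xi\bm u\in L^\infty L^2$, $\sqrt\xi\in L^\infty H^1$, and $\sqrt\xi\nabla_x\bm u,\sqrt\xi\partial_z\bm u,\sqrt\xi w,\sqrt\xi\partial_zw\in L^2$. Throughout I will use that $\mathcal J$ and $1/\mathcal J$ are bounded on $[0,H]$, with $e^{-H/H_s}\le\mathcal J\le1$. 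So every factor of $\mathcal J$ that fails to cancel exactly costs only a harmless constant. No lower bound on $\xi$ is invoked at any step, which makes the argument vacuum-safe.

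\textbf{Purely density-dependent memberships.} By \eqref{eq:wdj-measure-identity}, $\int_\Omega\rho\,dx\,dy=\int_{\Omega_z}\xi\,dx\,dz$, and $\xi\in L^\infty L^3\subset L^\infty L^1$ on the bounded domain $\Omega_z$. This gives $\rho\in L^\infty L^1$. Then $p(\rho)=c^2\rho$ lies in $L^1((0,T)\times\Omega)$, and so does $\nu_1(\rho)=\bar\nu_1\rho$. For $\nu_2=\bar\nu_2\rho/\mathcal J^2$, the bound $\int\nu_2\,dy=\bar\nu_2\int\xi\mathcal J^{-2}\,dz\le\bar\nu_2e^{2H/H_s}\int\xi\,dz$ suffices.

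For $e(\rho)$, I work with $e_1$, since $e-e_1$ is the constant $-c^2$ on a bounded domain. Write $\rho\log\rho=\rho\log\xi+\rho\log\mathcal J$. The second term is bounded by $\tfrac{H}{H_s}\rho$. For the first, $|\rho\log\xi|\le|\xi\log\xi|$ because $\mathcal J\le1$, and $|\xi\log\xi|\le C(1+\xi^2)$ lies in $L^\infty L^1$ since $\xi\in L^\infty L^3$. This is the only step that uses an integrability exponent beyond $L^1$, and it is where I expect to have to be careful: the convention $0\log0=0$ on $\{\xi=0\}$ is needed, and the constant term of $e$ must be absorbed by $|\Omega|<\infty$.

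\textbf{Velocity-weighted memberships.}
\begin{itemize}
\item $\sqrt\rho\bm u$: $\int\rho|\bm u|^2\,dy=\int\xi|\bm u|^2\,dz$, which is exact.
\item $\sqrt\rho v$: $\rho v^2=\xi w^2/\mathcal J$, so $\int\rho v^2\,dy=\int\xi w^2\mathcal J^{-2}\,dz\le e^{2H/H_s}\|\sqrt\xi w\|^2_{L^2}$.
\item $\sqrt{\nu_1}D_x\bm u$: $\int\nu_1|D_x\bm u|^2\,dy=\bar\nu_1\int\xi|D_x\bm u|^2\,dz\le\bar\nu_1\|\sqrt\xi\nabla_x\bm u\|^2$, using $|D_x\bm u|\le|\nabla_x\bm u|$.
\item $\sqrt{\nu_2}\partial_y\bm u$: by \eqref{eq:wdj-derivative-identities}, $\nu_2|\partial_y\bm u|^2=\bar\nu_2\xi|\partial_z\bm u|^2\mathcal J^{-1}\cdot\mathcal J$. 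After $dy=dz/\mathcal J$ this gives $\bar\nu_2\int\xi|\partial_z\bm u|^2\mathcal J^{-1}dz$, bounded by $e^{H/H_s}\bar\nu_2\|\sqrt\xi\partial_z\bm u\|^2$.
\end{itemize}

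All integrals are taken fiberwise in $(t,x)$ and combined by Fubini, exactly as in the proof of Theorem \ref{thm:wdj-transfer}. Measurability of the composed functions follows because $\zeta$ is a smooth diffeomorphism. The final sentence of the statement then follows by combining these memberships with Theorem \ref{thm:wdj-transfer} and Proposition \ref{prop:wdj-hydrostatic-transfer}.
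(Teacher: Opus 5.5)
Your proposal is correct and follows essentially the paper's route: the change of variables $dy=dz/\mathcal J$ with $e^{-H/H_s}\le\mathcal J\le1$, exact cancellations where they occur, and an $r|\log r|$ bound fed by $\xi\in L^\infty(0,T;L^3(\Omega_z))$ for the internal energy (your $C(1+r^2)$ works as well as the paper's $C_0(1+r^3)$). One harmless slip: $\nu_2|\partial_y\bm u|^2=\bar\nu_2\xi\mathcal J\,|\partial_z\bm u|^2$, not $\bar\nu_2\xi|\partial_z\bm u|^2$, so the Jacobian cancels exactly and $\int_\Omega\nu_2(\rho)|\partial_y\bm u|^2\,dx\,dy=\bar\nu_2\int_{\Omega_z}\xi|\partial_z\bm u|^2\,dx\,dz$ with no $e^{H/H_s}$ factor, although your stated upper bound remains true.
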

\begin{proof}
Since $\mathcal J(y)=e^{-y/H_s}$ is decreasing on $[0,H]$
(\eqref{eq:jacobian-def}), set
\[
J_*:=\mathcal J(H)=e^{-H/H_s}\in(0,1),\qquad\text{so that}\qquad
J_*\le\mathcal J(y)\le1\quad\text{for every }y\in[0,H].
\]
We use throughout the change of variables $dy=dz/\mathcal J$, which
converts any integral over $\Omega$ of a function $F$ of $\rho$ or
$\bm{u}$, expressed through $\xi,w$ via
\eqref{eq:wdj-inverse-transform}, into
$\int_\Omega F\,dx\,dy=\int_{\Omega_z}(F/\mathcal J)\,dx\,dz$, and the
elementary bound, valid for a universal constant $C_0>0$ and every
$r\ge0$ (with the convention $r\log r:=0$ at $r=0$),
\begin{equation}
\label{eq:rlogr-bound}
r|\log r|\le C_0(1+r^3),
\end{equation}
which holds because $r\mapsto r|\log r|/(1+r^3)$ extends continuously
to $[0,\infty)$ by $0$ at $r=0$ and tends to $0$ as $r\to\infty$, hence
is bounded. Since $\Omega_z$ has finite measure and
$\xi\in L^\infty(0,T;L^3(\Omega_z))$ by \cite[Def.\ 2.1]{WangDouJiu},
\eqref{eq:rlogr-bound} and H\"older's inequality give
\begin{equation}
\label{eq:xi-L1-and-xilogxi-L1}
\xi\in L^\infty(0,T;L^1(\Omega_z)),
\qquad
\xi|\log\xi|\in L^\infty(0,T;L^1(\Omega_z)),
\end{equation}
both uniformly in $t$. We verify the eight memberships in turn.

\emph{Mass.} By the measure identity \eqref{eq:wdj-measure-identity},
$\int_\Omega\rho\,dx\,dy=\int_{\Omega_z}\xi\,dx\,dz$, bounded uniformly
in $t$ by \eqref{eq:xi-L1-and-xilogxi-L1}: $\rho\in
L^\infty(0,T;L^1(\Omega))$.

\emph{Internal energy.} By \eqref{eq:isothermal-potential-family},
$e(\rho)=c^2\rho(\log\rho-1)$. Writing $\rho=\xi\mathcal J$, the
identity used throughout is directly on the product $\rho\log\rho$,
\[
\rho\log\rho=\mathcal J\,\xi\log\xi+\mathcal J\,\xi\log\mathcal J,
\]
valid pointwise everywhere on $(0,H)$, including on $\{\xi=0\}$ under
the convention $r\log r:=0$ at $r=0$, where both sides vanish together
-- unlike the pointwise split $\log\rho=\log\xi+\log\mathcal J$, which
is meaningless on $\{\xi=0\}$ since $\log\xi$ alone is undefined there,
and is not used. So
\[
e(\rho)=c^2\mathcal J(\xi\log\xi-\xi)+c^2\mathcal J\,\xi\log\mathcal J.
\]
Since $0<\mathcal J\le1$ and $-H/H_s\le\log\mathcal J\le0$ on $[0,H]$
(so $|\log\mathcal J|\le H/H_s$),
\[
|e(\rho)|\le c^2\xi|\log\xi|+c^2\xi\Bigl(1+\frac H{H_s}\Bigr).
\]
Hence, using $\int_\Omega F\,dx\,dy=\int_{\Omega_z}(F/\mathcal
J)\,dx\,dz\le J_*^{-1}\int_{\Omega_z}F\,dx\,dz$ for $F\ge0$,
\[
\int_\Omega|e(\rho)|\,dx\,dy
\le J_*^{-1}c^2\int_{\Omega_z}\Bigl[\xi|\log\xi|+\Bigl(1+\frac
H{H_s}\Bigr)\xi\Bigr]\,dx\,dz,
\]
finite and bounded uniformly in $t$ by \eqref{eq:xi-L1-and-xilogxi-L1}:
$e(\rho)\in L^\infty(0,T;L^1(\Omega))$.

\emph{Pressure.} By \eqref{eq:wdj-measure-identity} and
$p(\rho)=c^2\rho$ (\eqref{eq:isothermal}),
$\int_\Omega p(\rho)\,dx\,dy=c^2\int_{\Omega_z}\xi\,dx\,dz$; integrating
in $t\in(0,T)$ and using \eqref{eq:xi-L1-and-xilogxi-L1} gives
$p(\rho)\in L^1((0,T)\times\Omega)$.

\emph{Horizontal kinetic energy.} By
\eqref{eq:wdj-measure-identity}, $\bm{u}$ being unchanged by the
transform,
$\int_\Omega\rho|\bm{u}|^2\,dx\,dy=\int_{\Omega_z}\xi|\bm{u}|^2\,dx\,dz$,
bounded uniformly in $t$ since $\sqrt\xi\,\bm{u}\in
L^\infty(0,T;L^2(\Omega_z))$ by \cite[Def.\ 2.1]{WangDouJiu}: $\sqrt\rho\,\bm{u}\in
L^\infty(0,T;L^2(\Omega))$.

\emph{Vertical kinetic energy.} Since $v=w/\mathcal J$ and
$\rho=\xi\mathcal J$,
\[
\rho v^2\,dy=\xi\mathcal J\cdot\frac{w^2}{\mathcal J^2}\cdot\frac{dz}{\mathcal J}
=\frac{\xi w^2}{\mathcal J^2}\,dz,
\qquad\text{so}\qquad
\int_\Omega\rho|v|^2\,dx\,dy=\int_{\Omega_z}\frac{\xi|w|^2}{\mathcal J^2}\,dx\,dz
\le J_*^{-2}\int_{\Omega_z}\xi|w|^2\,dx\,dz.
\]
Integrating in $t$, this is finite by $\sqrt\xi\,w\in
L^2((0,T)\times\Omega_z)$ (\cite[Def.\ 2.1]{WangDouJiu}): $\sqrt\rho\,v\in
L^2((0,T)\times\Omega)$.

\emph{Viscosity coefficients.} By \eqref{eq:wdj-viscosity-choice},
$\nu_1(\rho)=\bar\nu_1\rho$, so
$\int_\Omega\nu_1(\rho)\,dx\,dy=\bar\nu_1\int_{\Omega_z}\xi\,dx\,dz$,
integrable in $t$ as above: $\nu_1(\rho)\in
L^1((0,T)\times\Omega)$. For $\nu_2(\rho)=\bar\nu_2\rho/\mathcal J^2$,
\[
\int_\Omega\nu_2(\rho)\,dx\,dy
=\int_{\Omega_z}\frac{\nu_2(\rho)}{\mathcal J}\,dx\,dz
=\bar\nu_2\int_{\Omega_z}\frac\rho{\mathcal J^3}\,dx\,dz
=\bar\nu_2\int_{\Omega_z}\frac\xi{\mathcal J^2}\,dx\,dz
\le\bar\nu_2J_*^{-2}\int_{\Omega_z}\xi\,dx\,dz,
\]
using $\rho=\xi\mathcal J$; integrating in $t$ gives $\nu_2(\rho)\in
L^1((0,T)\times\Omega)$.

\emph{Horizontal dissipation.} By
\eqref{eq:wdj-derivative-identities}, $\nu_1(\rho)D_x\bm{u}=\bar\nu_1\xi\mathcal
J\,D_x\bm{u}$, so
\[
\int_\Omega\nu_1(\rho)|D_x\bm{u}|^2\,dx\,dy
=\int_{\Omega_z}\frac{\nu_1(\rho)|D_x\bm{u}|^2}{\mathcal J}\,dx\,dz
=\bar\nu_1\int_{\Omega_z}\xi|D_x\bm{u}|^2\,dx\,dz
\le\bar\nu_1\int_{\Omega_z}\xi|\nabla_x\bm{u}|^2\,dx\,dz,
\]
the last inequality because symmetrization $\nabla_x\bm{u}\mapsto
D_x\bm{u}=\tfrac12(\nabla_x\bm{u}+(\nabla_x\bm{u})^\top)$ is the orthogonal
projection onto symmetric matrices for the Frobenius norm, hence
norm-nonincreasing. Integrating in $t$, this is finite by
$\sqrt\xi\,\nabla_x\bm{u}\in L^2((0,T)\times\Omega_z)$ (\cite[Def.\
2.1]{WangDouJiu}): $\sqrt{\nu_1(\rho)}\,D_x\bm{u}\in
L^2((0,T)\times\Omega)$.

\emph{Vertical dissipation.} By $\partial_y\bm{u}=\mathcal
J\,\partial_z\bm{u}$ and $\nu_2(\rho)=\bar\nu_2\rho/\mathcal J^2$
(\eqref{eq:wdj-derivative-identities}),
\[
\int_\Omega\nu_2(\rho)|\partial_y\bm{u}|^2\,dx\,dy
=\int_{\Omega_z}\frac{\nu_2(\rho)\mathcal
J^2|\partial_z\bm{u}|^2}{\mathcal J}\,dx\,dz
=\bar\nu_2\int_{\Omega_z}\frac\rho{\mathcal
J}|\partial_z\bm{u}|^2\,dx\,dz
=\bar\nu_2\int_{\Omega_z}\xi|\partial_z\bm{u}|^2\,dx\,dz,
\]
again using $\rho/\mathcal J=\xi$. Integrating in $t$, this is finite
by $\sqrt\xi\,\partial_z\bm{u}\in L^2((0,T)\times\Omega_z)$
(\cite[Def.\ 2.1]{WangDouJiu}): $\sqrt{\nu_2(\rho)}\,\partial_y\bm{u}\in
L^2((0,T)\times\Omega)$.

None of the eight bounds required $\xi$ bounded away from $0$: each is
a one-sided estimate in terms of $\xi,\,\xi|\log\xi|,\,\xi|\bm{u}|^2,$
etc., all of which vanish, rather than blow up, on $\{\xi=0\}$ under
the stated conventions. This completes the verification of Definition
\ref{def:weak-solution}(ii).
\end{proof}

\paragraph{What is established here relative to \cite{WangDouJiu}.}
\cite{WangDouJiu} prove existence of a weak solution $(\xi,\bm{u},w)$ of
the transformed system \eqref{eq:wdj-transformed}, with energy and
Bresch--Desjardins entropy inequalities, through a five-parameter
approximation scheme, and their Theorem 2.2 states that this transfers
to a weak solution $(\rho,\bm{u},v)$ of the
original system \eqref{eq:CPE-damped}, verified there by matching norms
between the two formulations (their Sec.\ VI.D). Theorem
\ref{thm:wdj-transfer} makes this pull-back explicit at the
level of the distributional equations, term by term -- the
regularity-class matching is established independently by Proposition
\ref{prop:wdj-regularity-transfer}, via
$\partial_yv=\frac1{H_s\mathcal J}w+\partial_zw$
(\eqref{eq:wdj-v-identity}) -- and Corollary
\ref{cor:wdj-energy-transfer} carries the energy inequality across it in
the same way. Lemma \ref{lem:wdj-cross-term-sign} identifies, as a
byproduct available only once the transfer is made explicit, the cross
term mechanism (b) reduces to at this pressure law, with the sign
already established by Ersoy, Ngom and Sy and by Wang, Dou and Jiu
themselves. What is
established here is thus the explicit distributional pull-back,
together with the structural interpretation, via $\mathcal{N}\equiv0$
(Theorem \ref{thm:Lambda-N-classification}), of why this factorization
exists precisely at the isothermal law; the change of variables itself,
already used by \cite{ErsoyNgomSy,WangDouJiu}, and their existence
theorem are only transferred here, not re-derived, and neither the
transfer nor the sign-definiteness result is extended beyond the
isothermal law (see the discussion after Lemma
\ref{lem:wdj-cross-term-sign} below).

\begin{corollary}[Transfer of the energy inequality]
\label{cor:wdj-energy-transfer}
Under the hypotheses of Theorem \ref{thm:wdj-transfer}, the finite-energy
inequality of Definition \ref{def:finite-energy}, extended by the
damping term of Remark \ref{rem:wdj-damping-extension} and specialized
to $p(\rho)=c^2\rho$, $e_1(\rho)=c^2(\rho\log\rho-\rho+1)$ (Proposition
\ref{prop:isothermal-potential}), holds for $(\rho,\bm{u},v)$ on $\Omega$ if
and only if the corresponding energy inequality holds for $(\xi,\bm{u},w)$
on $\Omega_z$, of the same functional form as the base energy
inequality of \cite[Prop.\ 3.2, eq.\ (3.40)]{WangDouJiu} from which it
descends, through their approximation scheme, at the level of the weak
solution of their Theorem 2.1. More precisely, writing
$\widehat{\mathcal E}$ for the harmonized total energy
\eqref{eq:harmonized-energy} at $\bar\rho=1$ and
\begin{equation}
\label{eq:wdj-energy-transformed}
\mathcal E_\xi(t):=\int_{\Omega_z}\Bigl(\tfrac12\xi|\bm{u}|^2
+c^2(\xi\log\xi-\xi+1)\Bigr)\,dx\,dz
\end{equation}
for the corresponding energy on the transformed domain,
\begin{equation}
\label{eq:wdj-energy-constant}
\widehat{\mathcal E}(t)=\mathcal E_\xi(t)+C_{H,H_s}\quad\text{for every }t\ge0,
\qquad
C_{H,H_s}:=c^2\bigl(|\Omega|-|\Omega_z|\bigr)
=c^2|\mathbb T^2|\Bigl[H-H_s\bigl(1-e^{-H/H_s}\bigr)\Bigr]>0,
\end{equation}
while the kinetic, viscous-dissipation, and damping terms transfer with
no additive constant. Since $C_{H,H_s}$ is independent of $t$, it
cancels between the initial and current energies, and the two
integrated energy inequalities are equivalent.
\end{corollary}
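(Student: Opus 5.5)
The plan is a term-by-term change of variables, using the same identities already exploited in the proofs of Theorem \ref{thm:wdj-transfer} and Proposition \ref{prop:wdj-regularity-transfer}. Throughout I use $\rho=\xi\mathcal J$ and $dy=dz/\mathcal J$. The measure identity \eqref{eq:wdj-measure-identity} gives
\[
\int_\Omega\tfrac12\rho|\bm u|^2\,dx\,dy=\int_{\Omega_z}\tfrac12\xi|\bm u|^2\,dx\,dz.
\]
The identities \eqref{eq:wdj-derivative-identities} give
\[
\int_\Omega\nu_1(\rho)|D_x\bm u|^2\,dx\,dy=\bar\nu_1\int_{\Omega_z}\xi|D_x\bm u|^2\,dx\,dz,\qquad
\int_\Omega\nu_2(\rho)|\partial_y\bm u|^2\,dx\,dy=\bar\nu_2\int_{\Omega_z}\xi|\partial_z\bm u|^2\,dx\,dz.
\]
Finally, $\int_\Omega r\rho|\bm u|^3\,dx\,dy=\int_{\Omega_z}r\xi|\bm u|^3\,dx\,dz$. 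This disposes of the kinetic, dissipation and damping parts with no additive constant. All of these are nonnegative integrands vanishing on $\{\xi=0\}$, so the computation is vacuum-safe.

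The core step is the potential part. I would show
\[
\int_\Omega\bigl(e_1(\rho)+\Phi\rho\bigr)\,dx\,dy=\int_{\Omega_z}c^2(\xi\log\xi-\xi+1)\,dx\,dz+C_{H,H_s}.
\]
First, write $\rho\log\rho=\mathcal J\xi\log\xi+\mathcal J\xi\log\mathcal J$, which holds pointwise with the convention $0\log0=0$, as in the proof of Proposition \ref{prop:wdj-regularity-transfer}. Since $\log\mathcal J=-y/H_s=-\Phi(y)/c^2$, we have
\[
c^2\mathcal J\xi\log\mathcal J=-\Phi\,\xi\mathcal J=-\Phi\rho.
\]
This is exactly the gravitational term, and the two cancel. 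This cancellation is the multiplicative Montgomery identity of Proposition \ref{prop:ens-montgomery} at work, and it is the key point.

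What remains is $c^2\mathcal J(\xi\log\xi-\xi)+c^2$ integrated against $dy$. Since $\mathcal J\,dy=dz$, the first piece becomes $\int_{\Omega_z}c^2(\xi\log\xi-\xi)\,dx\,dz$. The constant $c^2$ integrates to $c^2|\Omega|=c^2|\Omega_z|+c^2(|\Omega|-|\Omega_z|)$. Together these give $\mathcal E_\xi+C_{H,H_s}$. For the explicit value, $|\Omega_z|=|\mathbb T^2|\,h$ with $h=H_s(1-e^{-H/H_s})$. Positivity of $C_{H,H_s}$ follows from $1-e^{-s}<s$ for $s=H/H_s>0$.

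Integrability of every integral involved is supplied by \eqref{eq:xi-L1-and-xilogxi-L1}, so each identity holds for a.e.\ $t$ and at $t=0$ for the initial data $\rho_0=\xi_0\mathcal J$. It holds for every $t$ wherever the energies are defined. Summing the pieces shows that the left side of the original-variable inequality equals the transformed left side plus $C_{H,H_s}$. Because $C_{H,H_s}$ does not depend on $t$, it also appears with the same value on the right side at $t=0$, so the two inequalities are equivalent.

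The main obstacle is the bookkeeping of the $\log\mathcal J$ term and the constant $+1$ across the measure change. The constant $+1$ is integrated against $dy$ rather than $dz$, and that is precisely the origin of $|\Omega|-|\Omega_z|$. I would also check that the harmonized $\widehat{\mathcal E}$, and not $\mathcal E$, is the one matched, since with $e$ in place of $e_1$ the constant would shift by $-c^2|\Omega|$.
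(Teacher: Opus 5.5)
Your proposal is correct and follows essentially the same route as the paper. Both use the vacuum-safe product identity $\rho\log\rho=\mathcal J\xi\log\xi+\mathcal J\xi\log\mathcal J$, the exact cancellation of $\Phi\rho$ via $c^2\log\mathcal J=-\Phi$, and a termwise transfer with $dy=dz/\mathcal J$. The only differences are cosmetic: you split the constant $c^2|\Omega|$ after integrating, whereas the paper splits it pointwise as $c^2\mathcal J+c^2(1-\mathcal J)$, and you get positivity from $1-e^{-s}<s$ rather than from $\int_0^H\mathcal J\,dy<H$.
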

\begin{proof}
We first re-derive \eqref{eq:wdj-energy-constant} directly from
$\rho=\xi\mathcal J$, without passing through $\mathfrak
M=c^2\log\xi$ -- which is undefined on $\{\xi=0\}$ and is therefore
unusable once $(\xi,\bm{u},w)$ is only a weak solution admitting
vacuum. Throughout, $r\log r:=0$ at $r=0$. Since $H_s=c^2/g$ and
$\mathcal J(y)=e^{-y/H_s}$ \eqref{eq:jacobian-def},
\begin{equation}
\label{eq:cJ-log-phi}
c^2\log\mathcal J(y)=-c^2\frac y{H_s}=-gy=-\Phi(y),
\end{equation}
an elementary identity that replaces $\mathfrak M=c^2\log\xi$ below.
Using $\rho=\xi\mathcal J$, the identity is taken directly on the
product $\rho\log\rho$ -- not on the pointwise split
$\log\rho=\log\xi+\log\mathcal J$, which is meaningless on $\{\xi=0\}$
since $\log\xi$ alone is undefined there -- and holds pointwise
everywhere on $(0,H)$, including on $\{\xi=0\}$ under the stated
convention, both sides vanishing together there:
\[
\rho\log\rho=\mathcal J\,\xi\log\xi+\mathcal J\,\xi\log\mathcal J
=\mathcal J\,\xi\log\xi-\frac1{c^2}\,\mathcal J\,\xi\,\Phi(y),
\]
the last equality by \eqref{eq:cJ-log-phi}. Multiplying by $c^2$ and combining with
$e_1(\rho)=c^2(\rho\log\rho-\rho+1)$ \eqref{eq:isothermal-potential}
and $\rho=\xi\mathcal J$,
\[
e_1(\rho)+\Phi(y)\rho
=c^2\mathcal J\,\xi\log\xi-\mathcal J\,\xi\,\Phi(y)-c^2\mathcal
J\,\xi+c^2+\Phi(y)\,\mathcal J\,\xi
=c^2\mathcal J\bigl(\xi\log\xi-\xi\bigr)+c^2,
\]
the two terms in $\Phi(y)\mathcal J\xi$ cancelling exactly: this is the
direct-substitution counterpart of $\partial_y\mathfrak M=0$, reached
here with no reference to $\mathfrak M$ itself, and valid pointwise for
every $y\in(0,H)$, including where $\xi(\zeta(y))=0$ (there both sides
reduce to $c^2$, since $\rho=0$ too). Writing
$c^2=c^2\mathcal J+c^2(1-\mathcal J)$ splits this as
\begin{equation}
\label{eq:vacuum-robust-internal-split}
e_1(\rho)+\Phi(y)\rho=c^2\mathcal J\bigl(\xi\log\xi-\xi+1\bigr)+c^2(1-\mathcal J),
\end{equation}
again pointwise on all of $(0,H)$, vacuum included. Integrating
\eqref{eq:vacuum-robust-internal-split} over $\Omega$ termwise, using
$dy=dz/\mathcal J$: in the first term the factor $\mathcal J$ is
exactly cancelled by the Jacobian,
\[
\int_\Omega c^2\mathcal J\bigl(\xi\log\xi-\xi+1\bigr)\,dx\,dy
=\int_{\Omega_z}c^2\bigl(\xi\log\xi-\xi+1\bigr)\,dx\,dz,
\]
while in the second, using $\int_\Omega\mathcal
J\,dx\,dy=\int_{\Omega_z}dx\,dz=|\Omega_z|$ (again $dy=dz/\mathcal J$),
\[
\int_\Omega c^2(1-\mathcal J)\,dx\,dy=c^2|\Omega|-c^2|\Omega_z|=:C_{H,H_s}.
\]
Summing the two,
\[
\int_\Omega\bigl(e_1(\rho)+\Phi(y)\rho\bigr)\,dx\,dy
=c^2\int_{\Omega_z}(\xi\log\xi-\xi+1)\,dx\,dz+C_{H,H_s},
\]
which is exactly \eqref{eq:wdj-energy-constant}. Unlike the identity
$\mathfrak M=c^2\log\xi$ it replaces, every step above involves
$\log\xi$ only inside the product $\xi\log\xi$, so this derivation of
\eqref{eq:wdj-energy-constant} -- and hence the corollary itself --
remains valid verbatim on any vacuum region $\{\xi=0\}$. That
$C_{H,H_s}$ so obtained coincides with $c^2(|\Omega|-|\Omega_z|)$, and
is strictly positive, is seen exactly as before: writing $|\Omega|=|\mathbb
T^2|H$ and $|\Omega_z|=|\mathbb T^2|h$ with $h=H_s(1-e^{-H/H_s})=\zeta(H)$
(the definitions preceding \eqref{eq:wdj-ens-transform}), positivity
holds because $\mathcal J(y')<1$ for every $y'>0$
(\eqref{eq:jacobian-def}), so
\[
h=\zeta(H)=\int_0^H\mathcal J(y')\,dy'<\int_0^H1\,dy'=H.
\]
The kinetic and
dissipation terms match exactly, with no additive constant, by
\eqref{eq:wdj-measure-identity} and \eqref{eq:wdj-derivative-identities}:
\[
\int_\Omega\tfrac12\rho|\bm{u}|^2dx\,dy=\int_{\Omega_z}\tfrac12\xi|\bm{u}|^2dx\,dz,
\qquad
2\int_\Omega\nu_1(\rho)|D_x\bm{u}|^2dx\,dy=2\bar\nu_1\int_{\Omega_z}\xi|D_x\bm{u}|^2dx\,dz,
\]
\[
\int_\Omega\nu_2(\rho)|\partial_y\bm{u}|^2dx\,dy=\bar\nu_2\int_{\Omega_z}\xi|\partial_z\bm{u}|^2dx\,dz,
\qquad
r\int_\Omega\rho|\bm{u}|^3dx\,dy=r\int_{\Omega_z}\xi|\bm{u}|^3dx\,dz.
\]
Summing the kinetic and internal-energy identities gives
\eqref{eq:wdj-energy-constant}. Summing all terms together and applying
Theorem \ref{thm:total-energy} (extended per Remark
\ref{rem:wdj-damping-extension}) on the $(\rho,\bm{u},v)$ side gives exactly
the kinetic, internal, dissipation, and damping terms of
\cite[Prop.\ 3.2, eq.\ (3.40)]{WangDouJiu} on the $(\xi,\bm{u},w)$ side --
that display carries, in addition, the regularization terms specific to
their approximate system at that level, which are absent here since we
work directly with the (already regularization-free) weak solution of
their Theorem 2.1 -- up to the additive constant $C_{H,H_s}$ identified
above, which cancels identically in the differenced, integrated
inequality form.
\end{proof}

\begin{proposition}[Isothermal image of mechanism (b)]
\label{prop:wdj-mechanism-reduction}
Apply Theorem \ref{thm:kappa-entropy} to $(\rho,\bm{u},v)$ with the viscosity
choice \eqref{eq:wdj-viscosity-choice} -- not excluded by its proof, by
Remark \ref{rem:wdj-viscosity-divergence-form} above -- specialized to
$\kappa=1$: the classical
Bresch--Desjardins entropy construction that \cite{WangDouJiu} itself
uses. At $\kappa=1$, mechanism (b) of \eqref{eq:kappa-entropy} splits as
$B=B_1+B_2$, with
\begin{equation}
\label{eq:B1-B2-def}
B_1:=2\int_\Omega\nu_1(\rho)\partial_y\bm{u}\cdot\nabla_xv\,dx\,dy,
\qquad
B_2:=4\bar\nu_1\int_\Omega\nu_1(\rho)\mathcal N(\rho)\nabla_x\rho\cdot\nabla_xv\,dx\,dy
\end{equation}
(the factors $\kappa$ multiplying $B_1$ and $B_2$ in \eqref{eq:kappa-entropy}
both equal $1$ here; at general $\kappa\in[0,1]$ they multiply $B_1$ and
$B_2$ respectively and are both linear in $\kappa$, the drift's own
equation \eqref{eq:V-equation} having converted the coefficient of $B_2$
from $\kappa^2$ to $\kappa$, as recorded in the proof of Theorem
\ref{thm:kappa-entropy}). At the
isothermal law, $\mathcal N\equiv0$ (Proposition
\ref{prop:isothermal-Lambda-N}) gives $B_2=0$, removing the
density-gradient component of mechanism (b), and
\begin{equation}
\label{eq:wdj-mechanism-b-reduced}
B_{\mathrm{iso}}:=B_1
=2\bar\nu_1\int_{\Omega_z}\xi\,\partial_z\bm{u}\cdot\nabla_xw\,dx\,dz
\end{equation}
is the whole of mechanism (b): a term coupling the vertical shear of $\bm{u}$
to the horizontal gradient of $w$, here expressed as its image under
the Ersoy--Ngom--Sy change of variables. No claim about the sign of
$B_{\mathrm{iso}}$ is made at this stage; it is established separately,
as a classical fact recalled from Ersoy, Ngom and Sy and from Wang,
Dou and Jiu, in Lemma \ref{lem:wdj-cross-term-sign} below.
\end{proposition}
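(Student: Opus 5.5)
Specializing Theorem \ref{thm:kappa-entropy} to $\kappa=1$ is immediate from the display \eqref{eq:kappa-entropy}: mechanism (b) consists of the two terms carrying prefactors $2\kappa$ and $4\kappa\bar\nu_1$, which at $\kappa=1$ are exactly $B_1$ and $B_2$ of \eqref{eq:B1-B2-def}. Remark \ref{rem:wdj-viscosity-divergence-form} lets us apply the theorem with the $y$-dependent $\nu_2$ of \eqref{eq:wdj-viscosity-choice}. Mechanism (b) never involves $\nu_2$, so this choice affects only mechanism (a), which vanishes here anyway. The parenthetical claim about general $\kappa$ is just a reading of the coefficients $2\kappa$ and $4\kappa\bar\nu_1$. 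Their linearity in $\kappa$ comes from $\kappa^2+\kappa(1-\kappa)=\kappa$, already recorded at the end of the proof of Theorem \ref{thm:kappa-entropy}.

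Next, $B_2=0$ follows pointwise from $\mathcal N\equiv0$ (Proposition \ref{prop:isothermal-Lambda-N}). For this the integrand $\nu_1\mathcal N\nabla_x\rho\cdot\nabla_xv$ only has to make sense for a regular, strictly positive solution, which is the standing setting of the theorem. Hence $B_{\mathrm{iso}}=B_1$.

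The remaining step is the change of variables in $B_1$. Pointwise, $\nu_1(\rho)=\bar\nu_1\xi\mathcal J$ and $\partial_y\bm u=\mathcal J\partial_z\bm u$ by \eqref{eq:wdj-derivative-identities}. Since $\mathcal J$ depends only on $y$, horizontal differentiation commutes with the transform, giving $\nabla_xv=\nabla_xw/\mathcal J$ from $v=w/\mathcal J$ in \eqref{eq:wdj-inverse-transform}. Finally $dy=dz/\mathcal J$. Multiplying these together,
\[
2\nu_1(\rho)\,\partial_y\bm u\cdot\nabla_xv\,dy
=2\bar\nu_1\xi\mathcal J\cdot\mathcal J\partial_z\bm u\cdot\frac{\nabla_xw}{\mathcal J}\cdot\frac{dz}{\mathcal J}
=2\bar\nu_1\xi\,\partial_z\bm u\cdot\nabla_xw\,dz .
\]
Integrating in $x$ and applying Fubini over $\Omega\to\Omega_z$ through the diffeomorphism $\zeta$ then gives \eqref{eq:wdj-mechanism-b-reduced}.

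The only point needing care is the bookkeeping of the powers of $\mathcal J$. The powers that enter are $+1$ from $\nu_1$, $+1$ from $\partial_y$, $-1$ from $v$, and $-1$ from the Jacobian. Their total is zero, so the argument hinges on the fact that no $\partial_y\mathcal J$ term appears. That holds because $\nabla_x$, not $\partial_y$, falls on $v$, in contrast with the identity $\partial_yv=\frac1{H_s\mathcal J}w+\partial_zw$ of \eqref{eq:wdj-v-identity}. No sign claim is needed, so nothing further is required.
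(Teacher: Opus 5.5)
Your proposal is correct and follows the paper's proof essentially step for step. You specialize \eqref{eq:kappa-entropy} to $\kappa=1$ and use $\mathcal N\equiv0$ to remove $B_2$ (and mechanism (a)). You then transform $B_1$ via $\nu_1(\rho)=\bar\nu_1\xi\mathcal J$, $\partial_y\bm u=\mathcal J\partial_z\bm u$, $\nabla_xv=\nabla_xw/\mathcal J$ and $dy=dz/\mathcal J$, and the powers of $\mathcal J$ cancel exactly, as in the paper.
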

\begin{proof}
At $\kappa=1$, what would formally have been mechanism (c) is absent from
\eqref{eq:kappa-entropy} by the cancellation recorded in the proof of
Theorem \ref{thm:kappa-entropy} (Remark \ref{rem:mechanism-b-open}),
mechanism (a) is proportional to
$\mathcal{N}(\rho)$, and mechanism (b) equals $B_1+B_2$ as in
\eqref{eq:B1-B2-def}. By Proposition \ref{prop:isothermal-Lambda-N},
$\mathcal{N}\equiv0$ at the isothermal law for \emph{every} choice of
$\nu_2$, including the $y$-dependent \eqref{eq:wdj-viscosity-choice}:
mechanism (a) vanishes, $B_2=0$, and mechanism (b) reduces to $B_1$. By
\eqref{eq:wdj-derivative-identities} and \eqref{eq:wdj-v-identity},
$\nabla_xv=\nabla_xw/\mathcal J$, since $\nabla_x$ commutes with the
purely-$y$ reparametrization, and
\[
\nu_1(\rho)\partial_y\bm{u}=\bar\nu_1\xi\mathcal J\cdot\mathcal J\,\partial_z\bm{u}=\bar\nu_1\xi\mathcal J^2\partial_z\bm{u},
\]
so $\nu_1(\rho)\partial_y\bm{u}\cdot\nabla_xv=\bar\nu_1\xi\mathcal J\,\partial_z\bm{u}\cdot\nabla_xw$
carries exactly one remaining power of $\mathcal J$, cancelled by the
Jacobian $dy=dz/\mathcal J$ of \eqref{eq:wdj-measure-identity} upon
integration, giving \eqref{eq:wdj-mechanism-b-reduced}.
\end{proof}

\begin{remark}[Isothermal resolution of mechanism (b) and relation with Ersoy--Ngom--Sy--Wang--Dou--Jiu]
\label{rem:wdj-mechanism}
The value $\kappa=1$ in Proposition \ref{prop:wdj-mechanism-reduction}
is the formal, non-attained endpoint of the free interval
$\kappa\in(0,1)$ used by the $\kappa$-entropy method of Bresch, Vasseur
and Yu \cite{BVY} (Section \ref{sec:conclusion}), which multiplies the
same drift $2\nabla_xs(\rho)$ of \eqref{eq:augmented-velocity-def} for a
general, possibly non-linear, $\nu_1$.

This answers Remark \ref{rem:isothermal-dichotomy} by two independent,
but related, structural facts:
\[
\boxed{\begin{gathered}
\mathcal N\equiv0\text{ removes the density-gradient component }B_2\text{ of mechanism (b);}\\
\partial_z\xi=0,\text{ together with the transformed continuity}\\
\text{equation and impermeability, converts the surviving component }B_1\\
\text{into the negative vertical dissipation }-D_w\text{ (defined in Remark \ref{rem:wdj-cross-term-favorable} below).}
\end{gathered}}
\]
First, $\mathcal N\equiv0$ (Proposition \ref{prop:isothermal-Lambda-N})
removes mechanism (b)'s second term for \emph{any} $\nu_2$, leaving
only $B_{\mathrm{iso}}=2\int_\Omega\nu_1(\rho)\partial_y\bm{u}\cdot
\nabla_xv\,dx\,dy$ (Proposition \ref{prop:wdj-mechanism-reduction}).
Second, the change of variables \eqref{eq:wdj-ens-transform} -- exactly
the multiplicative factorization of $\mathfrak M$ available at this one
law (Proposition \ref{prop:ens-montgomery}) -- produces $\partial_z\xi=0$,
and it is this second fact that lets the transformed continuity
equation convert $B_{\mathrm{iso}}$ into the classical vertical
Bresch--Desjardins dissipation $2\bar\nu_1\int_{\Omega_z}\xi|\partial_zw|^2$
already present in Ersoy, Ngom and Sy's and Wang, Dou and Jiu's own
entropy computations -- Lemma \ref{lem:wdj-cross-term-sign} below
records the precise correspondence, and is not itself a new
computation. We do not re-derive the full Bresch--Desjardins entropy
estimate of \cite{WangDouJiu} here (their Secs.\ III--VI, culminating in
their (5.9)): their analysis already controls the vertical dissipation
denoted $D_w$ below; our purpose is only to identify its negative as
the transformed image of mechanism (b) in the present general
barotropic framework (Remark \ref{rem:wdj-cross-term-favorable}).

The two features of the isothermal law -- $\mathcal{N}\equiv0$
and the fixed change of variables \eqref{eq:wdj-ens-transform} -- are
therefore not independent: both trace to the single fact
(Proposition \ref{lem:Lambda-N-geometric}, Theorem
\ref{thm:Lambda-N-classification}) that $\rho(t,x,\cdot)$ is a pure
exponential in $y$ iff $\mathcal{N}\equiv0$, iff the law is isothermal.
\end{remark}

\begin{lemma}[The classical Ersoy--Ngom--Sy--Wang--Dou--Jiu vertical cross-term identity]
\label{lem:wdj-cross-term-sign}
The identity below is not new. It is an immediate reformulation of the
vertical integration-by-parts mechanism already underlying the
dissipative term $2\bar\nu_1\int_{\Omega_z}\xi|\partial_zw|^2$ in the
Bresch--Desjardins entropy computations of Ersoy, Ngom and Sy
\cite[eqs.\ (29)--(35), (40)]{ErsoyNgomSy} and of Wang, Dou and Jiu
\cite[eq.\ (2.4) and the display following eq.\ (4.4)]{WangDouJiu},
where it appears for the augmented combination
$\bm{u}+2\bar\nu_1\nabla_x\ln\xi$ rather than for the bare cross term
below; in particular, the auxiliary identity
$\partial_z\operatorname{div}_x(\xi\bm{u})=-\xi\,\partial_{zz}w$ used in
the proof is their eq.\ (2.4) verbatim. We record the bare identity
separately only in order to combine it directly with Proposition
\ref{prop:wdj-mechanism-reduction} and identify the fate of mechanism
(b) under the isothermal transformation (Remark
\ref{rem:wdj-cross-term-favorable} below); no independent computation
is claimed.

Let $(\xi,\bm{u},w)$ be a regular solution of \eqref{eq:wdj-transformed} on
$\Omega_z$, in the same sense as a regular solution of \eqref{eq:CPE}
(strictly positive, in particular $\xi>0$, and sufficiently smooth) --
in particular with $\xi \bm{u}$ and $w$ twice continuously
differentiable in $z$, as used in the proof below to differentiate the
mass equation a second time. Then
\begin{equation}
\label{eq:wdj-cross-term-identity}
\int_{\Omega_z}\xi\,\partial_z\bm{u}\cdot\nabla_xw\,dx\,dz
=-\int_{\Omega_z}\xi\,|\partial_zw|^2\,dx\,dz.
\end{equation}
\end{lemma}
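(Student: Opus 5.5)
Since $\xi>0$ is independent of $z$ by the third equation of \eqref{eq:wdj-transformed}, the natural route is to integrate by parts horizontally and then use the $z$-derivative of the transformed continuity equation. The first step is to integrate by parts in $x$ on the left of \eqref{eq:wdj-cross-term-identity}. Periodicity removes the boundary terms and gives
\[
\int_{\Omega_z}\xi\,\partial_z\bm{u}\cdot\nabla_xw\,dx\,dz
=-\int_{\Omega_z}w\,\operatorname{div}_x(\xi\,\partial_z\bm{u})\,dx\,dz .
\]
Because $\partial_z\xi=0$, we have $\xi\,\partial_z\bm{u}=\partial_z(\xi\bm{u})$, so the integrand becomes $-w\,\partial_z\operatorname{div}_x(\xi\bm{u})$. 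This is where regularity enters: commuting $\partial_z$ with $\operatorname{div}_x$ requires $\xi\bm{u}$ to be twice differentiable.

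The second step derives the auxiliary identity $\partial_z\operatorname{div}_x(\xi\bm{u})=-\xi\,\partial_{zz}w$. The transformed mass equation reads $\partial_t\xi+\operatorname{div}_x(\xi\bm{u})+\partial_z(\xi w)=0$. Since $\xi$ does not depend on $z$, neither does $\partial_t\xi$; hence $\partial_z\partial_t\xi=0$ and $\partial_z(\xi w)=\xi\,\partial_zw$. Differentiating the mass equation once in $z$ (using that $w$ is $C^2$ in $z$) then gives exactly the stated identity. Substituting it, the left side of \eqref{eq:wdj-cross-term-identity} equals
\[
\int_{\Omega_z}\xi\,w\,\partial_{zz}w\,dx\,dz .
\]

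The third step integrates by parts in $z$, again using that $\xi$ is independent of $z$:
\[
\int_0^h\xi\,w\,\partial_{zz}w\,dz=\bigl[\xi\,w\,\partial_zw\bigr]_{z=0}^{z=h}-\int_0^h\xi\,|\partial_zw|^2\,dz .
\]
The boundary term vanishes by impermeability $w|_{z=0,h}=0$. Integrating over $\mathbb T^2$ then gives \eqref{eq:wdj-cross-term-identity}.

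The only delicate point is the bookkeeping of signs in the two integrations by parts, together with the justification for treating $\xi$ as a $z$-constant factor throughout, which is exactly where $\partial_z\xi=0$ is used. There is no genuine obstacle beyond this, since the regularity needed is assumed in the statement.
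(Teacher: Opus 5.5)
Your proposal is correct and follows essentially the same route as the paper. Both arguments use a horizontal integration by parts, the rewriting $\xi\,\partial_z\bm{u}=\partial_z(\xi\bm{u})$, the $z$-differentiated mass equation $\partial_z\operatorname{div}_x(\xi\bm{u})=-\xi\,\partial_{zz}w$, and a final vertical integration by parts with $w|_{z=0,h}=0$; the only difference is that you integrate by parts in $x$ before invoking $\partial_z\xi=0$ rather than after, and your explicit observation that $\partial_z\partial_t\xi=0$ handles a term the paper drops silently.
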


\begin{proof}[Proof (for completeness)]
We recall the classical computation. The four steps below reproduce
\cite[eqs.\ (29)--(35)]{ErsoyNgomSy} and \cite[eq.\ (2.4) and the
display following eq.\ (4.4)]{WangDouJiu}, specialized to the bare
cross term.
Since $\partial_z\xi=0$ (third equation of \eqref{eq:wdj-transformed}),
$\xi\,\partial_z\bm{u}=\partial_z(\xi \bm{u})$, so
\[
\int_{\Omega_z}\xi\,\partial_z\bm{u}\cdot\nabla_xw\,dx\,dz
=\int_{\Omega_z}\partial_z(\xi \bm{u})\cdot\nabla_xw\,dx\,dz.
\]
Integrating by parts in $x$ (periodicity, no boundary term) moves the
horizontal derivative onto $\xi \bm{u}$, and commuting $\partial_z$ and
$\operatorname{div}_x$,
\[
=-\int_{\Omega_z}w\,\operatorname{div}_x\bigl(\partial_z(\xi \bm{u})\bigr)\,dx\,dz
=-\int_{\Omega_z}w\,\partial_z\bigl(\operatorname{div}_x(\xi \bm{u})\bigr)\,dx\,dz.
\]
Differentiating the mass equation of \eqref{eq:wdj-transformed} in $z$
and using $\partial_z\xi=0$ once more (so
$\partial_{zz}(\xi w)=\xi\,\partial_{zz}w$) -- this is
\cite[eq.\ (2.4)]{WangDouJiu}, obtained by the same differentiation in
\cite[eqs.\ (29)--(35)]{ErsoyNgomSy} --
\[
\partial_z\bigl(\operatorname{div}_x(\xi \bm{u})\bigr)=-\partial_{zz}(\xi w)=-\xi\,\partial_{zz}w,
\]
hence
\[
\int_{\Omega_z}\xi\,\partial_z\bm{u}\cdot\nabla_xw\,dx\,dz
=\int_{\Omega_z}\xi\,w\,\partial_{zz}w\,dx\,dz.
\]
Integrating by parts in $z$, the boundary term vanishes by
$w|_{z=0,h}=0$, and $\partial_z(\xi w)=\xi\,\partial_zw$ once more gives
\eqref{eq:wdj-cross-term-identity} -- the same final step as in
\cite[eq.\ (35)]{ErsoyNgomSy} and in the display following
\cite[eq.\ (4.4)]{WangDouJiu}.
\end{proof}

\begin{remark}[Mechanism (b) and the classical Ersoy--Ngom--Sy--Wang--Dou--Jiu vertical dissipation]
\label{rem:wdj-cross-term-favorable}
The classical vertical dissipation
\[
D_w:=2\bar\nu_1\int_{\Omega_z}\xi\,|\partial_zw|^2\,dx\,dz\ge0
\]
already appears in the Bresch--Desjardins entropy estimates of Ersoy,
Ngom and Sy \cite[eq.\ (40)]{ErsoyNgomSy} and of Wang, Dou and Jiu
\cite[eq.\ (4.3), Lemma 4.1]{WangDouJiu} (item (i) below). The
contribution of the present framework is the identification
\[
\boxed{B_{\mathrm{iso}}=-D_w}
\]
of this known dissipation, with sign reversed, with the complete
mechanism (b) arising from the general barotropic augmented identity
of Theorem \ref{thm:kappa-entropy}, at the isothermal law and
$\kappa=1$. The chain is
\[
\mathcal N\equiv0\implies B_2=0,
\qquad
\rho=\xi\mathcal J,\ \partial_z\xi=0\implies
B_{\mathrm{iso}}=B_1=2\bar\nu_1\int_{\Omega_z}\xi\,\partial_z\bm{u}\cdot\nabla_xw\,dx\,dz
\quad(\text{Proposition \ref{prop:wdj-mechanism-reduction}}),
\]
\[
\text{Lemma \ref{lem:wdj-cross-term-sign}}\implies
B_{\mathrm{iso}}=-D_w\le0,
\]
so that, spelled out in the original variables via
\eqref{eq:wdj-derivative-identities}--\eqref{eq:wdj-v-identity} and
\eqref{eq:wdj-measure-identity} as in the proof of Proposition
\ref{prop:wdj-mechanism-reduction},
\[
2\int_\Omega\nu_1(\rho)\partial_y\bm{u}\cdot\nabla_xv\,dx\,dy
=B_{\mathrm{iso}}=-D_w\le0.
\]
Since mechanism (a) and $B_2$ both vanish identically at the
isothermal law, this identifies mechanism (b) \emph{in full}, at
$\kappa=1$ and the isothermal law, with $-D_w$.

Three distinct claims should not be conflated.
\begin{enumerate}[label=(\roman*)]
\item \emph{The dissipation fact}: $D_w\ge0$ for regular solutions of
\eqref{eq:wdj-transformed}. Established by Ersoy, Ngom and Sy and by
Wang, Dou and Jiu; not new here.
\item \emph{The identification}: $B_{\mathrm{iso}}=-D_w$. Established here
(Proposition \ref{prop:wdj-mechanism-reduction}); absent from both
sources, which have no general-law mechanism (b) object to identify
anything with.
\item \emph{The resolution of the obstruction}: the unresolved issue
within the present analysis is whether the mechanism-(b) obstruction of
Remark \ref{prop:mechanism-b-obstruction} can be resolved for a general
barotropic law, in the original variables, without the isothermal
law's special structure. This question is not resolved by the present
analysis and should not be confused with the status of $D_w$ itself at
the level of \cite{WangDouJiu}'s weak solutions, which their own
entropy already controls (see below).
\end{enumerate}

As recorded in Remark \ref{rem:wdj-mechanism}, neither structural fact
survives for a general barotropic law: $B_2\not\equiv0$ once $\mathcal
N\not\equiv0$, and no multiplicative factorization of this form renders
the transformed density vertically constant. Within the reduction
considered here, the isothermal law is precisely the case where both
hold simultaneously; this does not exclude that another change of
variables, or another entropy functional, might achieve an analogous
conversion at a different pressure law.

Two further things are not claimed: a full re-derivation of the
Bresch--Desjardins entropy inequality of \cite[Sec.\ V, eq.\
(5.9)]{WangDouJiu} (Remark \ref{rem:wdj-damping-extension}) -- only item
(ii) above is not read off directly from either source -- and any new
existence statement, already noted above.

Finally, $D_w$ is already rigorously controlled in the weak theory of
Wang--Dou--Jiu, whose regularity class gives $\sqrt\xi\,\partial_zw\in
L^2((0,T)\times\Omega_z)$ (\cite[Def.\ 2.1]{WangDouJiu}) as a
consequence of their own entropy estimate. What is proved here only at
the regular level is the bare identity
\eqref{eq:wdj-cross-term-identity} itself, via a second
$z$-differentiation of the transformed mass equation (Lemma
\ref{lem:wdj-cross-term-sign}), beyond their weak class; establishing it
at the level of their weak solution would require an additional
approximation argument, unnecessary for the dissipative estimate they
already obtain.
\end{remark}

As a complementary strong-solution result, built on the same change of
variables: Theorem \ref{thm:wdj-transfer} and Corollary \ref{cor:wdj-energy-transfer}
transfer \emph{weak} solutions, possibly attaining vacuum, through the
change of variables \eqref{eq:wdj-ens-transform}. Away from vacuum, the
same change of variables underlies the genuinely different, and
stronger, strong-solution theory of Ba, Ersoy and Ngom
\cite{BaErsoyNgom2026} recalled in \S\ref{sec:related-work}: their
construction requires the density to stay uniformly positive and
bounded on the time interval of existence (their estimate
\cite[eq.\ (3.5)]{BaErsoyNgom2026}), the opposite regime from the
possibly vanishing density admitted by Definition
\ref{def:weak-solution} and by \cite{WangDouJiu}; the two results are
complementary rather than overlapping, and neither implies the other.

\section{Conclusion}
\label{sec:conclusion}

\paragraph{Results established.} The hydrostatic balance
\eqref{eq:intro-hydrostatic} alone forces a first hydrostatic integral --
the vertical constancy of $\mathfrak{M}=\mathfrak{h}(\rho)+\Phi(y)$, the
classical Montgomery potential (Proposition \ref{lem:first-integral}) --
and this single fact organizes the energetic structure of the
barotropic CPE at every order of differentiation examined here. Tested
against the continuity equation, it produces, with no separate estimate
on the vertical velocity, the total mechanical energy identity of
Theorem \ref{thm:total-energy}. Differentiated once more, through the
augmented-velocity identity of Theorem \ref{thm:kappa-entropy}, it
produces the structural functions $\Lambda,\mathcal{N}$ of Definition
\ref{def:Lambda-N}, which give a complete classification, within the
stated pressure class, of the laws for which either structural
coefficient vanishes: $\Lambda\equiv0$ if and only if $p$ is quadratic, $\mathcal{N}\equiv0$
if and only if $p$ is isothermal, and no law annihilates both (Theorem
\ref{thm:Lambda-N-classification}). Re-expressed multiplicatively at
the one law for which $\mathcal{N}\equiv0$, the same constancy \emph{is} the
Ersoy--Ngom--Sy change of variables (Proposition
\ref{prop:ens-montgomery}); under it, we give a rigorous, term-by-term
transfer of the weak solutions of Wang, Dou and Jiu \cite{WangDouJiu} to
the original hydrostatic variables, valid on any vacuum region and
making explicit the argument behind their Theorem 2.2 (Theorem
\ref{thm:wdj-transfer}, Propositions
\ref{prop:wdj-hydrostatic-transfer} and \ref{prop:wdj-regularity-transfer},
Corollary \ref{cor:wdj-energy-transfer}).

\paragraph{Limitation of the direct route.} The augmented-velocity
identity of Theorem \ref{thm:kappa-entropy} is not, by itself, a closed
Bresch--Desjardins-type entropy: of the two mechanisms surviving the
Hessian cancellation recorded in its proof, mechanism (a) is
algebraically absorbable, for the polytropic law, by a suitable choice
of $\nu_2$ (Proposition \ref{prop:mechanism-a-closure}), and vanishes
identically at the isothermal law, but mechanism (b) -- the
vertical-shear term $B_1$ together with the density-gradient term
$B_2$, both coupled to $\nabla_xv$ -- is not controlled, at any
barotropic law, by the direct route developed here (Remark
\ref{prop:mechanism-b-obstruction}): the hydrostatic reduction replaces
the vertical momentum \emph{evolution} equation by the algebraic
balance \eqref{eq:intro-hydrostatic}, leaving $v$ without an equation to
test against (Remark \ref{rem:gravity-role}). This is a structural
observation about the one route examined here, not an impossibility
theorem. One partial exception is visible at the isothermal law and
$\kappa=1$, where mechanism (b) reduces exactly to the classical
vertical dissipation of Ersoy, Ngom and Sy and of Wang, Dou and Jiu,
with sign reversed: $B_{\mathrm{iso}}=-D_w\le0$ (Lemma
\ref{lem:wdj-cross-term-sign}, Remark
\ref{rem:wdj-cross-term-favorable}). This does not resolve the
corresponding limitation for a general barotropic law in the original
variables, which remains unresolved within the present analysis.

\paragraph{Perspectives.} Two directions seem most promising, and we
develop neither here.

\emph{Direction A: toward a closed weak theory with gravity.} Three
tools bear on the residual mechanism isolated by the present analysis, mechanism (b): a
genuinely different entropy functional adapted to the hydrostatic
reduction rather than transposed from the non-hydrostatic theory; an
effective hydrostatic flux for the vertical variable, in the spirit of
the effective viscous flux of the compressible Navier--Stokes theory;
and the $\kappa$-entropy method of Bresch, Vasseur and Yu \cite{BVY},
whose two-velocity structure is naturally connected with the drift
equation \eqref{eq:V-equation} used here. Whether its broader
viscosity class can provide additional control of mechanism (b) for
the polytropic law remains to be investigated. Whether any of the
three in fact closes
mechanism (b) is a substantial question this note does not attempt to
answer, nor do we claim mechanism (b) is the only difficulty a full
existence proof would still encounter. The
energetic structure identified here is, in our view, a natural starting
point for any of the three.

\emph{Direction B: beyond the barotropic model.} It would also be
natural to investigate whether an analogous hydrostatic potential
survives the extension to non-isentropic pressure laws and whether
corresponding structural functions can be identified in multilayer or
vertically discretized hydrostatic models.

\section*{Acknowledgements}
Part of this work was carried out while the first author was participating in the scientific trimester
``Mathematical Developments in Geophysical Fluid Dynamics''
held at the Institut Henri Poincaré (April--July 2026).
The authors acknowledge support of the Institut Henri Poincaré
(UAR 839 CNRS--Sorbonne Université), and LabEx CARMIN
(ANR-10-LABX-59-01).

\bibliographystyle{plain}
\bibliography{references}

\end{document}